\documentclass[preprint,12pt]{elsarticle}
\usepackage[T1]{fontenc}
\usepackage[utf8]{inputenc}
\usepackage{amsmath,amssymb,amsthm,mathtools}
\usepackage{booktabs}
\usepackage{array}
\usepackage{tikz}
\usepackage{enumitem}
\usepackage{microtype}
\usepackage[hidelinks]{hyperref}
\usepackage{newunicodechar}
\newunicodechar{，}{,}

\biboptions{sort&compress}
\newtheorem{theorem}{Theorem}[section]
\newtheorem{lemma}[theorem]{Lemma}
\newtheorem{proposition}[theorem]{Proposition}
\newtheorem{corollary}[theorem]{Corollary}
\newtheorem{definition}[theorem]{Definition}
\newtheorem{example}[theorem]{Example}
\newtheorem{remark}[theorem]{Remark}

\begin{document}

\begin{frontmatter}

\title{\textbf{Quasi-overlap and quasi-grouping functions on bounded pseudo-ordered sets}
\tnoteref{funding}}

\tnotetext[funding]{Supported by the National Natural Science Foundation of China (No. 12471440).}

\author{Xue-ping Wang\corref{cor1}}
\ead{xpwang1@hotmail.com}

\author{Hong-fei Liu}
\ead{liuhfmath@163.com}

\cortext[cor1]{Corresponding author.}
\address{School of Mathematical Sciences, Sichuan Normal University, Chengdu 610066, Sichuan, People's Republic of China}

\begin{abstract}
In this article, we first introduce quasi-overlap and quasi-grouping functions on bounded pseudo-ordered sets, respectively, and give their respective preliminary constructions and the induced quasi-overlap functions on quotient posets.. We then explore the invariant properties under weighting and automorphisms. We also introduce the migrativity, the homogeneity, the idempotency, the cancellation law, the Archimedean and limiting properties of quasi-overlap functions and discuss their relations. We finally present two constructing methods of quasi-overlap functions, which are based on boundary-faithful mappings and adjunctions on bounded psosets and the transitive-endpoints of a bounded trellis, respectively. The corresponding statements are valid for quasi-grouping functions as well.
\end{abstract}

\begin{keyword}
Bounded pseudo-ordered set; Bounded trellis; Quasi-overlap function; Quasi-grouping function; Constructing method
\end{keyword}

\end{frontmatter}

\section{Introduction}

Aggregation functions play an important role in fuzzy set theory, approximate reasoning, information fusion, and decision making \cite{Beliakov2007,Grabisch2009}. Among them, triangular norms (t-norms for short) provide the standard associative model of fuzzy conjunction \cite{Klement2000,Menger1942,Schweizer1983}. Overlap functions are introduced to model the degree of overlap between fuzzy classes, with image processing as a motivating application \cite{Bustince2010}. Unlike t-norms, overlap functions are not required to be associative. Their disjunctive counterparts, grouping functions, are subsequently studied in fuzzy modelling and pairwise comparison \cite{Bustince2012,Bedregal2013}. These notions are originally defined on $[0,1]$ and are later extended to more general ordered structures. In particular, Paiva et al.\ \cite{Paiva2021} introduced overlap and quasi-overlap functions on bounded lattices, while Qiao studied overlap and grouping functions on complete lattices \cite{Qiao2021} and also investigated the constructions of quasi-overlap functions on bounded partially ordered sets \cite{Qiao2022}. Related lattice-valued developments can be found in \cite{Ertugrul2015, QiaoZhao2022,Ouyang2021,SunPangZhang2022,Monteiro2024}.

A different situation arises when the transitivity of the underlying relation is dropped. Trellises, introduced by Skala \cite{Skala1971,Skala1972}, are lattice-like structures whose underlying relation is reflexive and antisymmetric but need not be transitive, called a pseudo-ordered set. This setting is relevant, for example, to cyclic
preferences and cyclic competition \cite{Fishburn1970,Kerr2002}. Aggregation operators on bounded trellises have recently been studied for t-norms \cite{Zedam2023,Geng2026}, uninorms and nullnorms \cite{Kong2024,Xiu2025,JiangWangLiu2025,KongLiu2026}.

Observe that the loss of transitivity, however, prevents a direct transfer of many lattice arguments: meet and join need not be increasing or associative, and separate monotonicity need not imply increasingness with respect to the product pseudo-order. Therefore, one of the most important tasks when we investigate aggregation functions on bounded pseudo-orders is to understand how dropping the transitivity property affects such operators. On the other hand, quasi-overlap functions along with their generalized forms will not only help us obtain new Choquet integral classes but also possibly help us to establish new logical systems on a more generally ordered sets rather than bounded lattices \cite{Qiao2022}. Motivated by the above two aspects, we consider the compatibility of overlap and grouping typical functions and the partially ordered sets without the transitivity. More specifically, in this article, we introduce quasi-overlap and quasi-grouping functions on bounded pseudo-ordered sets and develop their theory.

The article is organized as follows. In Section 2, we recall some necessary notions and results on pseudo-ordered sets, trellises and aggregation operators. In Section 3 we introduce quasi-overlap and quasi-grouping functions on bounded pseudo-ordered sets and show their preliminary constructions. In Section 4, we study the main algebraic properties of quasi-overlap and quasi-grouping functions and the relations among them. In Section 5, we develop two construction methods. A concluding remark is drawn in Section 6.

\section{Preliminaries}\label{sec:prelim}
This section gives some known concepts and results that will be used in the sequel.
\begin{definition}[\cite{Birkhoff73,Skala1971}]
\emph{A binary relation $\trianglelefteq$ on a nonempty set $X$ is called a \emph{pseudo-order} on $X$ if, for all $x,y\in X$,
\begin{enumerate}[label=(\roman*)]
\item $x\trianglelefteq x$ (reflexive);
\item $x\trianglelefteq y$ and $y\trianglelefteq x$ imply $x=y$ (antisymmetric).
\end{enumerate}
The pair $(X,\trianglelefteq)$ is called a \emph{pseudo-ordered set} (\emph{psoset} for short). The psoset $(X,\trianglelefteq)$ is called a \emph{partially ordered set} (\emph{poset} for short) if  for any $x, y,z\in X$, $x\trianglelefteq y$ and $y\trianglelefteq z$ imply $x\trianglelefteq z$ (transitive). A poset $(X,\trianglelefteq)$ is called a \emph{totally ordered set} if for any $x, y\in X$, $x\trianglelefteq y$ or $y\trianglelefteq x$. A psoset that is not a poset is called a \emph{proper psoset}}.
\end{definition}

Let $(X,\trianglelefteq)$ be a psoset, and $x,y\in X$. If $x\trianglelefteq y$ and $x\neq y$, we write $x\triangleleft y$. $x$ and $y$ are called \emph{incomparable} if neither $x\trianglelefteq y$ nor $y\trianglelefteq x$ hold, and in this case we use the symbol $x\parallel y$. For $x,y\in X$, we write $x\precsim y$ if there is a finite sequence $(x_0, x_1,\cdots, x_n)$ of elements from $X$ such that
$$
 x=x_0\trianglelefteq x_1\trianglelefteq\cdots\trianglelefteq x_n=y.
$$
Clearly, $\precsim$ is reflexive and transitive. A nonempty subset $C\subseteq X$ is a \emph{cycle} if, for every $x,y\in C$, there are finite sequences $(x_0, x_1,\cdots, x_m)$ and $(y_0, y_1,\cdots, y_n)$ whose terms all belong to $C$ such that
$$
 x=x_0\trianglelefteq x_1\trianglelefteq\cdots\trianglelefteq x_m=y
 \mbox{ and }
 y=y_0\trianglelefteq y_1\trianglelefteq\cdots\trianglelefteq y_n=x.
$$
By the antisymmetry, every non-trivial cycle contains at least three elements.

\begin{definition}[\cite{Skala1971}]
\emph{A psoset $(X,\trianglelefteq)$ is a $\wedge$-\emph{semi-trellis} if every pair $x,y\in X$ has a greatest lower bound, denoted $x\wedge y$. Dually, it is a $\vee$-\emph{semi-trellis} if every pair $x,y\in X$ has a least upper bound, denoted $x\vee y$. A \emph{trellis} is both a $\wedge$-semi-trellis and a $\vee$-semi-trellis, in symbols $\mathbb{T}=(X,\trianglelefteq,\wedge,\vee)$. It is \emph{bounded} if it has a smallest element $0$ and a greatest element $1$, i.e., $0\trianglelefteq x\trianglelefteq1$ for any $x\in X$ and is denoted by $\mathbb{T}=(X,\trianglelefteq,\wedge,\vee,0,1).$
A trellis that is not a lattice is called a \emph{proper trellis}. A nonempty subset $R$ of $X$ is called a \emph{subtrellis} of $\mathbb{T}$ if $(R,\trianglelefteq,\wedge,\vee)$ is also a trellis.}
\end{definition}

\begin{example}\label{ex:T5}
\emph{Let $X=\{0,a,b,c,1\}$. Besides reflexivity, define
$$
 0\trianglelefteq x\trianglelefteq 1 \mbox{ for any }x\in X, a\trianglelefteq b, b\trianglelefteq c, c\trianglelefteq a.
$$
Then $(X,\trianglelefteq)$ is a psoset, and $a,b,c$ form a cycle. Every pair of $X$ has a meet and a join, for example,
$$
 a\wedge b=a, a\vee b=b,
 b\wedge c=b, b\vee c=c,
 c\wedge a=c, c\vee a=a.
$$
The resulting structure $\mathbb{T}_5=(X,\trianglelefteq,\wedge,\vee,0,1)$ is a bounded proper trellis (see Figure 1).}

\begin{figure}[ht]
\centering
\begin{tikzpicture}[scale=1.0]
\node[circle,draw,inner sep=2pt] (0) at (0,0) {$0$};
\node[circle,draw,inner sep=2pt] (a) at (-1,1.5) {$a$};
\node[circle,draw,inner sep=2pt] (b) at (0,1.9) {$b$};
\node[circle,draw,inner sep=2pt] (c) at (1,1.5) {$c$};
\node[circle,draw,inner sep=2pt] (1) at (0,3.3) {$1$};
\draw (0)--(a); \draw (0)--(b); \draw (0)--(c);
\draw (a)--(1); \draw (b)--(1); \draw (c)--(1);
\draw[->,bend left=20] (a) to (b);
\draw[->,bend left=20] (b) to (c);
\draw[->,bend left=20] (c) to (a);
\end{tikzpicture}
\caption{The bounded proper trellis $\mathbb{T}_5$.}
\label{fig:T5}
\end{figure}
\end{example}
\begin{theorem}[\cite{Skala1971}]\label{thm:skala-equivalence}
For a trellis $\mathbb{T}=(X,\trianglelefteq,\wedge,\vee)$, the following are equivalent:
\begin{enumerate}[label=(\roman*)]
\item $\trianglelefteq$ is transitive;
\item $\wedge$ is associative;
\item $\vee$ is associative;
\item $\mathbb{T}$ is a lattice.
\end{enumerate}
\end{theorem}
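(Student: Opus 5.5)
We prove the cycle (i)$\Rightarrow$(iv)$\Rightarrow$(ii)$\Rightarrow$(i), and then get (iii)$\Leftrightarrow$(i) by duality. Two facts hold in any trellis and will be used throughout: $x\trianglelefteq y$ if and only if $x\wedge y=x$, if and only if $x\vee y=y$; and $x\wedge y\trianglelefteq x$, $x\wedge y\trianglelefteq y$. Both follow directly from reflexivity, antisymmetry and the definition of a greatest lower bound (respectively least upper bound).

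\emph{(i)$\Rightarrow$(iv) and (iv)$\Rightarrow$(ii).} If $\trianglelefteq$ is transitive, then $(X,\trianglelefteq)$ is a poset in which every pair has a meet and a join, that is, a lattice. Conversely, in a lattice, meets are associative by the usual argument. Both $(x\wedge y)\wedge z$ and $x\wedge(y\wedge z)$ are the greatest lower bound of $\{x,y,z\}$, and checking this uses transitivity. Since a lattice is in particular a poset, transitivity is available here.

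\emph{(ii)$\Rightarrow$(i).} Assume $\wedge$ is associative, and let $x\trianglelefteq y$ and $y\trianglelefteq z$. Then $x\wedge y=x$ and $y\wedge z=y$. Hence
$$x\wedge z=(x\wedge y)\wedge z=x\wedge(y\wedge z)=x\wedge y=x,$$
so $x\trianglelefteq z$. This is the key step. It is short, but it is the only place where the algebraic identity must be converted back into order information, and the whole conversion rests on the equivalence $x\trianglelefteq y\iff x\wedge y=x$. The point to check carefully is that this equivalence does not use transitivity. If $x\trianglelefteq y$, then $x$ is a lower bound of $\{x,y\}$, and every lower bound $w$ satisfies $w\trianglelefteq x$ directly, so $x$ is the greatest lower bound with no chaining needed. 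Conversely, if $x\wedge y=x$, then $x\trianglelefteq y$ because the meet is a lower bound.

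\emph{(iii)$\Leftrightarrow$(i).} The dual argument applies. If $\vee$ is associative, then from $x\trianglelefteq y$ and $y\trianglelefteq z$ we have $x\vee y=y$ and $y\vee z=z$, so
$$x\vee z=x\vee(y\vee z)=(x\vee y)\vee z=y\vee z=z,$$
giving $x\trianglelefteq z$. For the other direction, (i) implies (iv), and join is associative in a lattice.
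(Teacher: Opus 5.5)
Your proof is correct and complete. The transitivity-free equivalences $x\trianglelefteq y\iff x\wedge y=x\iff x\vee y=y$ are exactly what turns associativity back into transitivity in (ii)$\Rightarrow$(i) and (iii)$\Rightarrow$(i); the remaining implications are immediate, since (i) and (iv) coincide by definition for a trellis and meets and joins are associative in any lattice.

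The paper gives no proof of its own for this result; it only quotes it from Skala. So there is no in-paper argument to compare against, and yours is the standard one.
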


Let $(X,\trianglelefteq)$ be a psoset. Define a product pseudo-order $\trianglelefteq_2$ on $X^2$ by
$$(x,y)\trianglelefteq_2(x',y')\Longleftrightarrow x\trianglelefteq x'\mbox{ and }y\trianglelefteq y'.
$$

\begin{definition}[\cite{Zedam2023}]
A binary operation $F:X^2\to X$ on a psoset $(X,\trianglelefteq)$ is \emph{increasing} if, for all $x,y,x',y'\in X$,
$$
 (x,y)\trianglelefteq_2(x',y')\Longrightarrow F(x,y)\trianglelefteq F(x',y').
$$
\end{definition}

Note that an increasing operation is increasing in each coordinate, and the converse is valid on posets because two one-coordinate inequalities can be concatenated by transitivity. However, it may fail on a proper psoset as shown by the following example.

\begin{example}
\emph{Let $c_0=a,c_1=b,c_2=c$, with indices read modulo $3$, and define $F:\mathbb{T}_5^2\to\mathbb{T}_5$ by
$$
F(x,y)=
\begin{cases}
0, & x=0\mbox{ or }y=0,\\
1, & x=1\mbox{ or }y=1,\mbox{ and }x,y\neq0,\\
c_{i+j}, & x=c_i,\ y=c_j.
\end{cases}
$$
For a fixed $c_j$, the relation $c_i\trianglelefteq c_{i+1}$ is sent to
$c_{i+j}\trianglelefteq c_{i+j+1}$, and the boundary cases are immediate. Hence $F$ is increasing in each coordinate. Nevertheless,
$$(a,a)\trianglelefteq_2(b,b), F(a,a)=a, F(b,b)=c, \mbox{and }a\ntrianglelefteq c.
$$
Thus $F$ is not increasing with respect to $\trianglelefteq_2$.}
\end{example}

The following example illustrates that both meet and join operations of a trellis need not be increasing.
\begin{example}\label{exam2.7}
\emph{On $\mathbb{T}_5$, it is easy to see that
$$
 (a,c)\trianglelefteq_2(b,c),
$$
but
$$
 a\wedge c=c\ntrianglelefteq b=b\wedge c, a\vee c=a\ntrianglelefteq c=b\vee c.
$$
Thus, neither $\wedge$ nor $\vee$ is increasing.}
\end{example}

\begin{definition}[\cite{Skala1972}]
\emph{Let $\mathbb{T}=(X,\trianglelefteq,\wedge,\vee)$ be a trellis. An element $a\in X$ is called
\begin{enumerate}[label=(\roman*)]
\item \emph{right-transitive} if, for all $x,y \in X$, $a\trianglelefteq x\trianglelefteq y$ implies $a\trianglelefteq y$;
\item \emph{left-transitive} if, for all $x,y \in X$, $x\trianglelefteq y\trianglelefteq a$ implies $x\trianglelefteq a$;
\item \emph{middle-transitive} if, for all $x,y \in X$, $x\trianglelefteq a\trianglelefteq y$ implies $x\trianglelefteq y$;
\item \emph{transitive} if it is right-, left- and middle-transitive.
\end{enumerate}
The corresponding sets of such elements are denoted by $X^{\rm rtr}$, $X^{\rm ltr}$, $X^{\rm mtr}$, and $X^{\rm tr}$, respectively.}
\end{definition}

In Example~\ref{ex:T5}, $X^{\rm rtr}$=$X^{\rm ltr}$=$X^{\rm mtr}$=\{0,1\}, and $X^{\rm tr}=\{0,1\}$.
\begin{definition}[\cite{Zedam2023}]
\emph{Let $(X,\trianglelefteq,0,1)$ be a bounded psoset. A binary operation $T:X^2\to X$ is a \emph{t-norm} if it is increasing, commutative, associative, and has the neutral element $1$. It is \emph{positive} if
$$
 T(x,y)=0\Longleftrightarrow x=0\mbox{ or }y=0.
$$
Dually, a \emph{t-conorm} $S:X^2\to X$ is increasing, commutative, associative, and has the neutral element $0$. It is positive if
$$
 S(x,y)=1\Longleftrightarrow x=1\mbox{ or }y=1.
$$}
\end{definition}

\section{Quasi-overlap and quasi-grouping functions}\label{sec:definitions}
In this section, we introduce the concepts of quasi-overlap and quasi-grouping functions on a bounded psoset and give their respective preliminary constructions on a bounded trellis. We also discuss their relation as well as the induced quasi-overlap functions on quotient posets.
\begin{definition}\label{def:qo}
\emph{Let $\mathbb{P}=(X,\trianglelefteq,0,1)$ be a bounded psoset. A binary operation $O:X^2\to X$ is a \emph{quasi-overlap function} on $\mathbb{P}$ if, for all $x,y, x',y'\in X$,
\begin{enumerate}[label=(QO\arabic*)]
\item $O(x,y)=O(y,x)$;
\item $O(x,y)=0$ if and only if $x=0$ or $y=0$;
\item $O(x,y)=1$ if and only if $x=y=1$;
\item $(x,y)\trianglelefteq_2(x',y')$ implies $O(x,y)\trianglelefteq O(x',y')$.
\end{enumerate}
The set of all quasi-overlap functions on $\mathbb{P}$ is denoted by $\mathbb{QO}(\mathbb{P})$. When the underlying bounded psoset $\mathbb{P}$ is a bounded trellis $\mathbb{T}$, we also write $\mathbb{QO}(\mathbb{T})$.}
\end{definition}

The definition of a quasi-grouping function $G:X^2\to X$ on a bounded psoset $\mathbb{P}=(X,\trianglelefteq,0,1)$ can dually be obtained from that of a quasi-overlap function, exactly replacing (QO2) and (QO3) by

(QG2) $G(x,y)=0$ if and only if $x=y=0$, \mbox{and}

(QG3) $G(x,y)=1$ if and only if $x=1$ or $y=1$,\\
 respectively.
The set of all quasi-grouping functions on $\mathbb{P}$ is denoted by $\mathbb{QG}(\mathbb{P})$. When the underlying bounded psoset $\mathbb{P}$ is a bounded trellis $\mathbb{T}$, we also write $\mathbb{QG}(\mathbb{T})$.

The following example gives all quasi-overlap and quasi-grouping functions on the bounded proper trellis $\mathbb{T}_5$, respectively.
\begin{example}\label{examp:T5-classification}
\emph{Let $\mathbb{T}_5$ be the trellis in Example~\ref{ex:T5}, $C=\{a,b,c\}$ and let $\sigma:C\to C$ be the cyclic permutation
$$
 \sigma(a)=b,\sigma(b)=c,\sigma(c)=a.
$$
For $\eta\in C$ and $\theta\in\{\eta,\sigma(\eta)\}$, define
\begin{enumerate}[label=(\roman*)]
\item $$
O_{\eta,\theta}(x,y)=
\begin{cases}
0,&x=0\mbox{ or }y=0,\\
1,&x=y=1,\\
\theta,&\{x,y\}\cap\{1\}\neq\emptyset\mbox{ and }\{x,y\}\cap C\neq\emptyset,\\
\eta,&x,y\in C.
\end{cases}
$$
\item $$
G_{\eta,\theta}(x,y)=
\begin{cases}
0,&x=y=0,\\
1,&x=1\mbox{ or }y=1,\\
\eta,&\{x,y\}\cap\{0\}\neq\emptyset\mbox{ and }\{x,y\}\cap C\neq\emptyset,\\
\theta,&x,y\in C.
\end{cases}
$$
\end{enumerate}
Then
$$
 \mathbb{QO}(\mathbb{T}_5)=\{O_{\eta,\theta}:\eta\in C,\ \theta\in\{\eta,\sigma(\eta)\}\}
$$
and
$$
 \mathbb{QG}(\mathbb{T}_5)=\{G_{\eta,\theta}:\eta\in C,\ \theta\in\{\eta,\sigma(\eta)\}\}.
$$
In particular, $|\mathbb{QO}(\mathbb{T}_5)|=|\mathbb{QG}(\mathbb{T}_5)|=6$ (see Tables \ref{tab:Oeta} and \ref{tab:Geta}).
\begin{table}[ht]
\centering
\caption{The operation $O_{\eta,\theta}$ on $\mathbb{T}_5$.}
\label{tab:Oeta}
\begin{tabular}{c|ccccc}
$O_{\eta,\theta}$&$0$&$a$&$b$&$c$&$1$\\\hline
$0$&$0$&$0$&$0$&$0$&$0$\\
$a$&$0$&$\eta$&$\eta$&$\eta$&$\theta$\\
$b$&$0$&$\eta$&$\eta$&$\eta$&$\theta$\\
$c$&$0$&$\eta$&$\eta$&$\eta$&$\theta$\\
$1$&$0$&$\theta$&$\theta$&$\theta$&$1$\\
\end{tabular}
\end{table}}

\begin{table}[ht]
\centering
\caption{The operation $G_{\eta,\theta}$ on $\mathbb{T}_5$.}
\label{tab:Geta}
\begin{tabular}{c|ccccc}
$G_{\eta,\theta}$&$0$&$a$&$b$&$c$&$1$\\\hline
$0$&$0$&$\eta$&$\eta$&$\eta$&$1$\\
$a$&$\eta$&$\theta$&$\theta$&$\theta$&$1$\\
$b$&$\eta$&$\theta$&$\theta$&$\theta$&$1$\\
$c$&$\eta$&$\theta$&$\theta$&$\theta$&$1$\\
$1$&$1$&$1$&$1$&$1$&$1$\\
\end{tabular}
\end{table}
\end{example}
\begin{proof}
We only prove (i), (ii) being dual. Let $O\in\mathbb{QO}(\mathbb{T}_5)$. Conditions (QO2) and (QO3) in Definition \ref{def:qo} determine the first row and column and give $O(1,1)=1$. Every other value of $O$ belongs to $C$. We determine the remaining values of $O$ in two steps.

Step 1. For $x,y\in C$, put $u_{xy}=O(x,y)$. Clearly, $u_{xy}\in C$. Since
$$
 (a,b)\trianglelefteq_2(a,c)
 \mbox{ and }
 (c,a)\trianglelefteq_2(a,b),
$$
the commutativity and increasingness give
$u_{ab}\trianglelefteq u_{ac}=u_{ca}\trianglelefteq u_{ab}$. By the antisymmetry, $u_{ab}=u_{ac}=u_{ca}$. Moreover,
$$
 (a,b)\trianglelefteq_2(b,c)\trianglelefteq_2(c,a),
$$
so $u_{ab}=u_{bc}=u_{ca}$. Thus by the commutativity of $O$, we have $u_{ab}=u_{ba}=u_{bc}=u_{cb}=u_{ca}=u_{ac}$. Denote $u_{ac}=\eta$. Since
$(c,a)\trianglelefteq_2(a,a)\trianglelefteq_2(a,b)$, we obtain $u_{aa}=\eta$. Similarly, $u_{bb}=u_{cc}=\eta$. Therefore,
$O(x,y)=\eta\mbox{ for all }x,y\in C$.

Step 2.  For $x\in C$, set $v_x=O(x,1)$. Since $(x,x)\trianglelefteq_2(x,1)$, the increasingness gives $\eta\trianglelefteq v_x$. As $v_x\in C$, it follows that
$v_x\in\{\eta,\sigma(\eta)\}$. Furthermore, $x\trianglelefteq\sigma(x)$ implies $v_x\trianglelefteq v_{\sigma(x)}$. If $v_x=\sigma(\eta)$, then $v_{\sigma(x)}=\sigma(\eta)$ since $\sigma(\eta)\ntrianglelefteq\eta$, and the arbitrariness of $x$ gives $v_a=v_b=v_c=\sigma(\eta)$. Otherwise, $v_a=v_b=v_c=\eta$.

Therefore, $O\in \{O_{\eta,\theta}:\eta\in C,\ \theta\in\{\eta,\sigma(\eta)\}\}$.

Conversely, we prove that every $O_{\eta,\theta}$ is a quasi-overlap function. The conditions (QO1)--(QO3) follow directly from the definition of $O_{\eta,\theta}$. It remains to verify the increasingness of $O_{\eta,\theta}$. Suppose that $(x,y)\trianglelefteq_2(x',y')$. We prove $O_{\eta,\theta}(x,y)\trianglelefteq O_{\eta,\theta}(x',y')$ by four cases as below.

(i) If $x=0$ or $y=0$, then by the definition of $O_{\eta,\theta}(x,y)$, $O_{\eta,\theta}(x,y)=0$, which implies $O_{\eta,\theta}(x,y)\trianglelefteq O_{\eta,\theta}(x',y')$.

(ii) If $(x,y)=(1,1)$, then clearly $x'=y'=1$, so $O_{\eta,\theta}(x,y)=1= O_{\eta,\theta}(x',y')$ by the definition of $O_{\eta,\theta}(x,y)$.

(iii) If $x,y\in C$, then by the definition of $O_{\eta,\theta}(x,y)$, $O_{\eta,\theta}(x,y)=\eta$. The pair $(x',y')$ must lie in $C^2\cup(C\times\{1\})\cup(\{1\}\times C)\cup\{(1,1)\}$. This follows that $$O_{\eta,\theta}(x',y')\in\{\eta,\theta,1\}\mbox{ with }\eta\trianglelefteq\theta\trianglelefteq1.$$
Thus $O_{\eta,\theta}(x,y)\trianglelefteq O_{\eta,\theta}(x',y')$.

(iv) If one coordinate of $(x,y)$ is $1$ and the other belongs to $C$, say $x=1$ and $y\in C$, then $x'=1$. Hence $O_{\eta,\theta}(x',y')\in\{\theta,1\}$, and
$$
 O_{\eta,\theta}(x,y)=\theta\trianglelefteq O_{\eta,\theta}(x',y').
$$

Therefore, $O_{\eta,\theta}\in\mathbb{QO}(\mathbb{T}_5)$ by Definition \ref{def:qo}.

It is easy to see $|\mathbb{QO}(\mathbb{T}_5)|=6$.
\end{proof}
\begin{remark}\label{remak:qotnorm}
\emph{Let $\mathbb{P}=(X,\trianglelefteq,0,1)$ be a bounded psoset.
\begin{enumerate}[label=(\roman*)]
\item Every positive t-norm on $\mathbb{P}$ is a quasi-overlap function.
\item A quasi-overlap function on $\mathbb{P}$ is a positive t-norm if and only if it is associative and has the neutral element $1$.
\item Every positive t-conorm on $\mathbb{P}$ is a quasi-grouping function.
\item A quasi-grouping function on $\mathbb{P}$ is a positive t-conorm if and only if it is associative and has the neutral element $0$.
\end{enumerate}}
\end{remark}

Note that in general, the associativity of Remark~\ref{remak:qotnorm} (ii) (resp. Remark~\ref{remak:qotnorm} (iv)) is not enough. For example, the quasi-overlap function $O_{a,a}$ on $\mathbb{T}_5$ is associative, but
$$
 O_{a,a}(b,1)=a\neq b,
$$
so $1$ is not a neutral element of $O_{a,a}$.

\begin{proposition}\label{prop:constant-qo}
Let $\mathbb{P}=(X,\trianglelefteq,0,1)$ be a bounded psoset, $\eta\in X\setminus\{0,1\}$. Define $O_\eta$ and $G_\eta:X^2\to X$ by
$$
O_\eta(x,y)=
\begin{cases}
0,&x=0\mbox{ or }y=0,\\
1,&x=y=1,\\
\eta,&\mbox{otherwise}
\end{cases}\mbox{ and }
G_\eta(x,y)=
\begin{cases}
0,&x=y=0,\\
1,&x=1\mbox{ or }y=1,\\
\eta,&\mbox{otherwise},
\end{cases}
$$respectively.
Then $O_\eta\in\mathbb{QO}(\mathbb{P})$ and $G_\eta\in\mathbb{QG}(\mathbb{P})$.
\end{proposition}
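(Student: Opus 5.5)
We prove only that $O_\eta\in\mathbb{QO}(\mathbb{P})$; the statement for $G_\eta$ is dual, exchanging the roles of $0$ and $1$. Throughout, the key fact is that $0\trianglelefteq z\trianglelefteq 1$ holds for every $z\in X$ by boundedness. No transitivity is needed, because every comparison we use will have $0$ or $1$ at one end, or will be of the form $\eta\trianglelefteq\eta$.

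\textbf{Conditions (QO1)--(QO3).} Commutativity (QO1) is immediate, since each defining clause of $O_\eta$ is symmetric in $x$ and $y$. For (QO2) and (QO3), we use the hypothesis $\eta\neq 0$ and $\eta\neq 1$. Then $O_\eta(x,y)=0$ occurs exactly in the first clause, that is, when $x=0$ or $y=0$. Likewise $O_\eta(x,y)=1$ occurs exactly in the second clause, that is, when $x=y=1$. Note that the first two clauses do not overlap, because $0\neq 1$; this holds since $\eta\in X\setminus\{0,1\}$ forces $|X|\ge 3$.

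\textbf{Condition (QO4).} Suppose $(x,y)\trianglelefteq_2(x',y')$. We split into three cases according to the clause that defines $O_\eta(x,y)$.
\begin{enumerate}[label=(\alph*)]
\item If $x=0$ or $y=0$, then $O_\eta(x,y)=0\trianglelefteq O_\eta(x',y')$.
\item If $x=y=1$, then $1\trianglelefteq x'$ together with $x'\trianglelefteq 1$ gives $x'=1$ by antisymmetry, and similarly $y'=1$. Hence both values equal $1$.
\item Otherwise $O_\eta(x,y)=\eta$, and $x,y\neq 0$.
\end{enumerate}

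\textbf{The one point needing care: $x',y'\neq 0$ in case (c).} We must rule out $x'=0$ or $y'=0$. If $x'=0$, then $x\trianglelefteq 0$, and combined with $0\trianglelefteq x$, antisymmetry gives $x=0$. This contradicts $x\neq 0$, and the same argument applies to $y'$. So $O_\eta(x',y')\in\{\eta,1\}$, and in either case $\eta\trianglelefteq O_\eta(x',y')$, by reflexivity or by boundedness. This completes the proof of (QO4), and hence $O_\eta\in\mathbb{QO}(\mathbb{P})$. The proof for $G_\eta$ is obtained by exchanging $0$ and $1$ throughout.
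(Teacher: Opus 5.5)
Your proof is correct and follows the paper's argument essentially step for step: you read (QO1)--(QO3) off the definition, and you use the same three-case split for (QO4), with antisymmetry against the bounds giving $x'=y'=1$ in the second case and excluding $x'=0$ or $y'=0$ in the third, so that $O_\eta(x',y')\in\{\eta,1\}$. The only cosmetic point is your justification of $0\neq 1$ via $|X|\ge 3$, which is slightly circular but harmless, since $0=1$ would force $X=\{0\}$ by antisymmetry and leave no room for $\eta$.
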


\begin{proof}
We only prove the assertion for $O_\eta$ since the proof for $G_\eta$ is dual. Conditions (QO1)--(QO3) follow directly from Definition \ref{def:qo}. It remains to verify (QO4). For all $x,y, x',y'\in X$, let $(x,y)\trianglelefteq_2(x',y')$. We prove $O_\eta(x,y)\trianglelefteq O_\eta(x',y')$ by distinguishing three cases as follows.

If $O_\eta(x,y)=0$, then clearly, $O_\eta(x,y)\trianglelefteq O_\eta(x',y')$.

If $O_\eta(x,y)=1$, then by (QO3), $x=y=1$. Hence $1\trianglelefteq x'$ and $1\trianglelefteq y'$, which together with the antisymmetry yield $x'=y'=1$. Thus $O_\eta(x,y)\trianglelefteq O_\eta(x',y')$.

Now, suppose that $O_\eta(x,y)=\eta$. Then neither $x'=0$ nor $y'=0$. Indeed, $x'=0$ would imply $x\trianglelefteq0$, and then $x=0$. Thus by (QO2), $O_\eta(x,y)=0$, a contradiction. Similarly, $y'=0$ would imply a contradiction. Hence $O_\eta(x',y')\in\{\eta,1\}$, which leads to $O_\eta(x,y)\trianglelefteq O_\eta(x',y')$.
\end{proof}

 Note that one can easily check that in Example \ref{examp:T5-classification}, taking $\eta=a$ and $\theta=b$ gives a quasi-overlap $O_{a,b}$, which cannot be obtained by Proposition \ref{prop:constant-qo}.
\begin{definition}
\emph{Let $\mathbb{P}=(X,\trianglelefteq,0,1)$ be a bounded psoset. A unary operation $N:X\to X$ is an \emph{involutive negation} on $\mathbb{P}$ if,  for all $x,y\in X$,
\begin{enumerate}[label=(\roman*)]
\item $x\trianglelefteq y$ implies $N(y)\trianglelefteq N(x)$;
\item $N(0)=1$ and $N(1)=0$;
\item $N(N(x))=x$.
\end{enumerate}}
\end{definition}

\begin{example}\label{ex:negations}
\emph{All the involutive negations $N_a$, $N_b$ and $N_c$ on $\mathbb{T}_5$ are shown by Table \ref{tab:Oeta001}.}
\end{example}
\begin{table}[ht]
\centering
\caption{All the involutive negations on $\mathbb{T}_5$.}
\label{tab:Oeta001}
\begin{tabular}{c|ccccc}
&$0$&$a$&$b$&$c$&$1$\\\hline
$N_a$&$1$&$a$&$c$&$b$&$0$\\
$N_b$&$1$&$c$&$b$&$a$&$0$\\
$N_c$&$1$&$b$&$a$&$c$&$0$
\end{tabular}
\end{table}
\begin{theorem}[De Morgan duality]\label{thm:duality}
Let $N$ be an involutive negation on a bounded psoset $\mathbb{P}=(X,\trianglelefteq,0,1)$.
\begin{enumerate}[label=(\roman*)]
\item If $O\in\mathbb{QO}(\mathbb{P})$, then the binary operation $G_N:X^2\to X$ defined by
$$
 G_N(x,y)=N\bigl(O(N(x),N(y))\bigr)
$$
belongs to $\mathbb{QG}(\mathbb{P})$.
\item If $G\in\mathbb{QG}(\mathbb{P})$, then the binary operation $O_N:X^2\to X$ defined by
$$
 O_N(x,y)=N\bigl(G(N(x),N(y))\bigr)
$$
belongs to $\mathbb{QO}(\mathbb{P})$.
\end{enumerate}
\end{theorem}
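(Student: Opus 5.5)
We prove only (i) directly; (ii) is entirely dual, obtained by exchanging the roles of (QO2)/(QO3) and (QG2)/(QG3).

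\textbf{Preliminary facts about $N$.} We first record two consequences of the definition of an involutive negation $N$.
\begin{enumerate}[label=(\alph*)]
\item $N$ is a bijection, since $N\circ N=\mathrm{id}_X$.
\item $N(x)=0$ if and only if $x=1$, and $N(x)=1$ if and only if $x=0$. Indeed, if $N(x)=0$, then $x=N(N(x))=N(0)=1$; the second equivalence is analogous.
\end{enumerate}
Neither fact uses transitivity of $\trianglelefteq$, which is important because $\mathbb{P}$ is only a psoset.

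\textbf{Commutativity and boundary conditions.} Commutativity of $G_N$ is immediate from (QO1). For (QG2), note that $G_N(x,y)=0$ iff $N(O(N(x),N(y)))=0$. By fact (b) this holds iff $O(N(x),N(y))=1$. By (QO3) this is equivalent to $N(x)=N(y)=1$, and by fact (b) again to $x=y=0$. Similarly, $G_N(x,y)=1$ iff $O(N(x),N(y))=0$. By (QO2) this holds iff $N(x)=0$ or $N(y)=0$, that is, iff $x=1$ or $y=1$, which is (QG3).

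\textbf{Increasingness.} This is the only step where the lack of transitivity could matter, but the argument needs just three single applications of the definitions, with no chaining of inequalities. Suppose $(x,y)\trianglelefteq_2(x',y')$.
\begin{itemize}
\item By the antitonicity of $N$ applied to each coordinate, $N(x')\trianglelefteq N(x)$ and $N(y')\trianglelefteq N(y)$. By the definition of $\trianglelefteq_2$, this is exactly $(N(x'),N(y'))\trianglelefteq_2(N(x),N(y))$.
\item Then (QO4) gives $O(N(x'),N(y'))\trianglelefteq O(N(x),N(y))$.
\item Applying antitonicity of $N$ once more yields $G_N(x,y)\trianglelefteq G_N(x',y')$.
\end{itemize}
The only point needing care is that monotonicity of $O$ is with respect to $\trianglelefteq_2$ jointly, and the coordinatewise reversal by $N$ produces precisely a $\trianglelefteq_2$-inequality. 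So no transitivity is ever required, and $G_N\in\mathbb{QG}(\mathbb{P})$.
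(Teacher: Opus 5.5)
Your proof is correct and follows the paper's argument essentially step for step. Commutativity comes from (QO1), the boundary conditions come from (QO2)/(QO3) together with the equivalences $N(z)=0\Leftrightarrow z=1$ and $N(z)=1\Leftrightarrow z=0$, and increasingness comes from applying the antitonicity of $N$ before and after (QO4). Your explicit derivation of those boundary equivalences from involutivity is a small clarification that the paper leaves implicit.
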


\begin{proof}
We only prove (i), the proof of (ii) being dual. The commutativity of $G_N$ follows from that of $O$. Moreover,
$$
\begin{aligned}
G_N(x,y)=0
&\Longleftrightarrow O(N(x),N(y))=1\\
&\Longleftrightarrow N(x)=N(y)=1 \mbox{ by (QO3)}\\
&\Longleftrightarrow x=y=0,
\end{aligned}
$$
and
$$
\begin{aligned}
G_N(x,y)=1
&\Longleftrightarrow O(N(x),N(y))=0\\
&\Longleftrightarrow N(x)=0\mbox{ or }N(y)=0 \mbox{ by (QO2)}\\
&\Longleftrightarrow x=1\mbox{ or }y=1.
\end{aligned}
$$
It remains to prove the increasingness. Let $x\trianglelefteq x'$ and $y\trianglelefteq y'$. Since $N$ is decreasing,
$$
 N(x')\trianglelefteq N(x), N(y')\trianglelefteq N(y).
$$
By the increasingness of $O$,
$$
 O(N(x'),N(y'))\trianglelefteq O(N(x),N(y)).
$$
Applying $N$ to the last inequality gives
$$
 G_N(x,y)\trianglelefteq G_N(x',y').
$$
Hence $G_N\in\mathbb{QG}(\mathbb{P})$.
\end{proof}

Notice that from Theorem~\ref{thm:duality}, each of negations a bounded psoset transforms quasi-overlap functions into quasi-grouping functions, and vice versa. Therefore, in what follows, all discussions on quasi-overlap functions are true for quasi-grouping functions dually.

In the rest of this section, we investigate induced quasi-overlap functions on quotient posets. We first have the following two lemmas.
\begin{lemma}\label{lem:path-preservation}
Let $(X,\trianglelefteq)$ be a psoset and $F:X^2\to X$ be increasing. If $x\precsim x'$ and $y\precsim y'$, then
$$
 F(x,y)\precsim F(x',y').
$$
\end{lemma}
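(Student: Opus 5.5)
The idea is to reduce the two chains to a single chain in the product $X^2$ and then push that chain through $F$ step by step. The monotonicity of $F$ is only available for single steps $\trianglelefteq_2$, not for $\precsim$ itself, so the work is to build a chain whose every link is such a step.

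By definition of $\precsim$, there are finite sequences
$$
x=x_0\trianglelefteq x_1\trianglelefteq\cdots\trianglelefteq x_m=x'
\quad\mbox{and}\quad
y=y_0\trianglelefteq y_1\trianglelefteq\cdots\trianglelefteq y_n=y'.
$$
The two chains generally have different lengths, so first I equalize them. By reflexivity we may pad the shorter chain with repetitions of its last term; for instance, if $m<n$, set $x_k=x'$ for $m<k\le n$. This is the only point needing care: repeating a term keeps every link $x_k\trianglelefteq x_{k+1}$ valid, since $x'\trianglelefteq x'$. After padding, both chains have a common length $N=\max\{m,n\}$.

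Next, for each $k<N$ we have $x_k\trianglelefteq x_{k+1}$ and $y_k\trianglelefteq y_{k+1}$. These two relations say exactly that $(x_k,y_k)\trianglelefteq_2(x_{k+1},y_{k+1})$. Since $F$ is increasing, each such step gives
$$
F(x_k,y_k)\trianglelefteq F(x_{k+1},y_{k+1}).
$$
Putting these steps together produces the chain
$$
F(x,y)=F(x_0,y_0)\trianglelefteq F(x_1,y_1)\trianglelefteq\cdots\trianglelefteq F(x_N,y_N)=F(x',y').
$$
By the definition of $\precsim$, this finite chain is precisely the statement $F(x,y)\precsim F(x',y')$.

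The argument never uses transitivity of $\trianglelefteq$, which is essential since the psoset may be proper. It also uses increasingness only for single product steps $\trianglelefteq_2$. The earlier example shows that coordinatewise monotonicity alone would not be enough for such steps, and the proof avoids relying on it.
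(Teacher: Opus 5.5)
Your proof is correct, but it moves through $X^2$ differently from the paper.

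The paper keeps the original chains and takes a ``staircase'' path. It first uses $F(x_i,y_0)\trianglelefteq F(x_{i+1},y_0)$ for $i<m$, then $F(x_m,y_j)\trianglelefteq F(x_m,y_{j+1})$ for $j<n$, and concatenates these into a chain of length $m+n$. You instead pad the shorter chain by reflexivity and move diagonally. This gives a single chain of length $\max\{m,n\}$ in which both coordinates change at every step.

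The two routes use different hypotheses. Each step of the paper's path changes only one coordinate, so it needs $F$ to be monotone in each variable separately and nothing more. The paper's argument therefore proves the lemma for every coordinatewise increasing operation, including the operation $F$ of the earlier example on $\mathbb{T}_5$. Your diagonal steps genuinely require increasingness with respect to $\trianglelefteq_2$.

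For this reason your closing remark is slightly misleading. Coordinatewise monotonicity indeed cannot justify a single diagonal step $F(x_k,y_k)\trianglelefteq F(x_{k+1},y_{k+1})$. It is nonetheless enough for the conclusion $F(x,y)\precsim F(x',y')$, because $\precsim$, unlike $\trianglelefteq$, is transitive. So that example is no obstruction to this lemma.

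In exchange, your version yields a shorter chain and needs no concatenation step.
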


\begin{proof}
Because $x\precsim x'$ and $y\precsim y'$, let
$$
 x=x_0\trianglelefteq x_1\trianglelefteq\cdots\trianglelefteq x_m=x'
$$
and
$$
 y=y_0\trianglelefteq y_1\trianglelefteq\cdots\trianglelefteq y_n=y'.
$$
We then have
$$
 F(x_i,y_0)\trianglelefteq F(x_{i+1},y_0)
$$ for every $i\in\{0,\ldots,m-1\}$ since $F$ is increasing and $y_0\trianglelefteq y_0$.
Analogously,
$$
 F(x_m,y_j)\trianglelefteq F(x_m,y_{j+1})
$$ for every $j\in\{0,\ldots,n-1\}$ since $x_m\trianglelefteq x_m$. Therefore,
$$
 F(x,y)=F(x_0,y_0)\precsim F(x_m,y_0)\precsim F(x_m,y_n)=F(x',y'),
$$
which proves the assertion.
\end{proof}

For $x,y\in X$, define $x\equiv y$ if $x\precsim y$ and $y\precsim x$. It is easy to see that $\equiv$ is an equivalence relation on $X$.

\begin{lemma}\label{lemm:scc-operation}
Let $(X,\trianglelefteq)$ be a psoset and let $F:X^2\to X$ be increasing. If $x\equiv x'$ and $y\equiv y'$ with $x, y, x', y'\in X$, then
$$
 F(x,y)\equiv F(x',y').
$$
Consequently, $F$ induces a well-defined increasing operation $\overline F$ on $X/{\equiv}$ by
$$
 \overline F([x],[y])=[F(x,y)].
$$
Moreover, the relation $\leq$ defined by
$$
 [x]\leq[y]\Longleftrightarrow x\precsim y
$$
is a partial order on $X/{\equiv}$.
\end{lemma}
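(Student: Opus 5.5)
The plan is to derive everything from Lemma~\ref{lem:path-preservation} together with the basic properties of $\precsim$.

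\textbf{Step 1: invariance of $F$ under $\equiv$.} Suppose $x\equiv x'$ and $y\equiv y'$. Then $x\precsim x'$ and $y\precsim y'$, so Lemma~\ref{lem:path-preservation} gives $F(x,y)\precsim F(x',y')$. Likewise $x'\precsim x$ and $y'\precsim y$ give $F(x',y')\precsim F(x,y)$. Together these yield $F(x,y)\equiv F(x',y')$.

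This is exactly the statement that $[F(x,y)]$ depends only on the classes $[x]$ and $[y]$. Hence $\overline F([x],[y])=[F(x,y)]$ is well defined on $X/{\equiv}$.

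\textbf{Step 2: $\leq$ is a well-defined partial order.} First, $\leq$ does not depend on the chosen representatives. If $x\equiv x_1$, $y\equiv y_1$ and $x\precsim y$, then $x_1\precsim x\precsim y\precsim y_1$, so $x_1\precsim y_1$ by transitivity of $\precsim$.

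The three order axioms then follow directly:
\begin{itemize}
\item reflexivity and transitivity of $\leq$ are inherited from those of $\precsim$;
\item for antisymmetry, $[x]\leq[y]$ and $[y]\leq[x]$ mean $x\precsim y$ and $y\precsim x$, i.e.\ $x\equiv y$, so $[x]=[y]$.
\end{itemize}

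\textbf{Step 3: $\overline F$ is increasing for $\leq$.} Suppose $[x]\leq[x']$ and $[y]\leq[y']$. Then $x\precsim x'$ and $y\precsim y'$, and Lemma~\ref{lem:path-preservation} gives $F(x,y)\precsim F(x',y')$. This says $\overline F([x],[y])\leq\overline F([x'],[y'])$.

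On the poset $(X/{\equiv},\leq)$, increasingness with respect to the product order is the same as separate monotonicity, so no further distinction is needed.

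\textbf{Expected obstacle.} The only delicate point is well-definedness, both of $\overline F$ and of $\leq$. Everything reduces to the path-preservation lemma and to the transitivity of $\precsim$ (which holds even though $\trianglelefteq$ itself is not transitive), so no real obstacle is expected.
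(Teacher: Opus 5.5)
Your proposal is correct and follows the paper's proof essentially step by step: both derive the invariance of $F$ under $\equiv$ and the increasingness of $\overline F$ from Lemma~\ref{lem:path-preservation}, and both check well-definedness and the poset axioms for $\leq$ via the chain $\widetilde x\precsim x\precsim y\precsim\widetilde y$ and the transitivity of $\precsim$. Your closing remark about separate monotonicity is true on a poset but unnecessary, since you already prove increasingness with respect to the product order directly.
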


\begin{proof}
Suppose that $x\equiv x'$ and $y\equiv y'$. Then $x\precsim x'$, $x'\precsim x$, $y\precsim y'$ and $y'\precsim y$. By Lemma~\ref{lem:path-preservation}, we have
$$
 F(x,y)\precsim F(x',y')\mbox{ and } F(x',y')\precsim F(x,y).
$$
Hence $F(x,y)\equiv F(x',y')$, and therefore $\overline F$ is well defined.

We next verify that $\leq$ is well defined. Suppose that $[x]=[\widetilde x]$, $[y]=[\widetilde y]$, and $[x]\leq[y]$. Then $x\precsim y$ and
$$
 \widetilde x\precsim x\precsim y\precsim\widetilde y,
$$
so $\widetilde x\precsim\widetilde y$, i.e., $[\widetilde x]\leq[\widetilde y]$.

The reflexivity and transitivity of $\leq$ follow from the corresponding properties of $\precsim$. If $[x]\leq[y]$ and $[y]\leq[x]$, then $x\equiv y$, and hence $[x]=[y]$, i.e., $\leq$ is antisymmetric. Therefore, $(X/{\equiv},\leq)$ is a poset.

Finally, let $[x]\leq[x']$ and $[y]\leq[y']$. Then $x\precsim x'$ and $y\precsim y'$, so Lemma~\ref{lem:path-preservation} yields
$$
 F(x,y)\precsim F(x',y'),
$$
then
$$
[F(x,y)]\leq [F(x',y')].
$$
Therefore, $\overline F([x],[y])\leq \overline F([x'],[y'])$, i.e., $\overline F$ is increasing.
\end{proof}

The next result shows that a quasi-overlap function induces a well-defined operation on a quotient poset in a natural way, which is also a quasi-overlap function on the quotient poset.
\begin{theorem}\label{thm:quotient-qo}

Let $\mathbb{P}=(X,\trianglelefteq,0,1)$ be a bounded psoset and $O\in\mathbb{QO}(\mathbb{P})$. Then the induced operation
$$
 \overline O([x],[y])=[O(x,y)]
$$
is a quasi-overlap function on the bounded poset $(X/{\equiv},\leq,[0],[1])$.
\end{theorem}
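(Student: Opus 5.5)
The plan is to check the four axioms of Definition~\ref{def:qo} for $\overline O$ on $(X/{\equiv},\leq,[0],[1])$. Most of the structure is already supplied by Lemma~\ref{lemm:scc-operation}. Since $O$ is increasing by (QO4), that lemma gives three things at once: $\overline O$ is well defined, $(X/{\equiv},\leq)$ is a poset, and $\overline O$ is increasing with respect to the product order. So (QO4) for $\overline O$ is immediate.

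Boundedness comes next. For every $x$ we have $0\trianglelefteq x\trianglelefteq 1$, hence $0\precsim x\precsim 1$ and $[0]\leq[x]\leq[1]$. Commutativity (QO1) is direct: $\overline O([x],[y])=[O(x,y)]=[O(y,x)]=\overline O([y],[x])$.

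The key step, and the only real obstacle, is (QO2) and (QO3). These need the bottom and top classes to be singletons: $[0]=\{0\}$ and $[1]=\{1\}$. I would prove this by induction on the length of a chain.

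Suppose $x\equiv 0$. Then in particular $x\precsim 0$, so there is a chain $x=x_0\trianglelefteq x_1\trianglelefteq\cdots\trianglelefteq x_n=0$. From $x_{n-1}\trianglelefteq 0$ and $0\trianglelefteq x_{n-1}$, antisymmetry gives $x_{n-1}=0$. Repeating this backwards along the chain yields $x=0$. The argument for $[1]$ is dual: use $1\precsim x$ and propagate forwards along the chain.

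With the singleton classes in hand, (QO2) and (QO3) reduce to the original axioms. For (QO2):
\[
\overline O([x],[y])=[0]\iff O(x,y)=0\iff x=0\mbox{ or }y=0\iff [x]=[0]\mbox{ or }[y]=[0].
\]
For (QO3):
\[
\overline O([x],[y])=[1]\iff O(x,y)=1\iff x=y=1\iff [x]=[y]=[1].
\]
Independence of representatives is already guaranteed by well-definedness. This would complete the proof.
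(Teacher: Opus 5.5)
Your proposal is correct and follows essentially the same route as the paper: Lemma~\ref{lemm:scc-operation} supplies well-definedness and increasingness, the antisymmetry-along-a-chain argument gives $[0]=\{0\}$ and $[1]=\{1\}$, and (QO2) and (QO3) then transfer directly from $O$. Your explicit check that $[0]\leq[x]\leq[1]$, so that the quotient really is a bounded poset, is a small addition that the paper leaves implicit.
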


\begin{proof}
By Lemma~\ref{lemm:scc-operation}, $\overline O$ is well defined, commutative, and increasing. We first show that
$$
 [0]=\{0\}\mbox{ and }[1]=\{1\}.
$$
Indeed, suppose that $z\precsim0$ with $z\in X$. Then there is a finite sequence $(z_0, z_1,\cdots, z_m)$ of elements from $X$ such that
$$
 z=z_0\trianglelefteq z_1\trianglelefteq\cdots\trianglelefteq z_m=0.
$$
Since $z_{m-1}\trianglelefteq0$ and $0$ is the smallest element, the antisymmetry gives $z_{m-1}=0$. Repeating the above argument yields $z=0$. Hence $[0]=\{0\}$. Dually, if $1\precsim z$, then $[1]=\{1\}$.
Thus $$
\begin{aligned}
\overline O([x],[y])=[0]
&\Longleftrightarrow [O(x,y)]=[0]\\
&\Longleftrightarrow O(x,y)=0\\
&\Longleftrightarrow x=0\mbox{ or }y=0 \mbox{ by (QO2)}\\
&\Longleftrightarrow [x]=[0]\mbox{ or }[y]=[0]
\end{aligned}
$$
and
$$
\begin{aligned}
\overline O([x],[y])=[1]
&\Longleftrightarrow [O(x,y)]=[1]\\
&\Longleftrightarrow O(x,y)=1\\
&\Longleftrightarrow x=y=1 \mbox{ by (QO3)}\\
&\Longleftrightarrow [x]=[y]=[1].
\end{aligned}
$$
Therefore, $\overline O$ is a quasi-overlap function on $X/{\equiv}$ by Definition \ref{def:qo}.
\end{proof}

\begin{example}\label{ex:T5-quotient}
\upshape
One can check that the quotient of $\mathbb{T}_5$ under $\equiv$ has three elements
$$
 \mathbf 0=[0], \mathbf c=[a]=[b]=[c], \mathbf 1=[1],
$$
which form the chain $\mathbf0<\mathbf c<\mathbf1$. From Example~\ref{examp:T5-classification} every quasi-overlap function induces the same quotient operation $\overline O$ (see Table 4).
\begin{table}[ht]
\centering
\caption{The quasi-overlap function on $X/{\equiv}$}
\label{tab:Oeta0}
\begin{tabular}{c|ccccc}
$\overline O$ & $\mathbf 0$ & $\mathbf c$ & $\mathbf 1$\\
\hline
$\mathbf 0$ & $\mathbf 0$ & $\mathbf 0$ & $\mathbf 0$\\
$\mathbf c$ & $\mathbf 0$ & $\mathbf c$ & $\mathbf c$\\
$\mathbf 1$ & $\mathbf 0$ & $\mathbf c$ & $\mathbf 1$\\
\end{tabular}
\end{table}
\end{example}

\section{Properties of quasi-overlap functions}\label{sec:properties}
In this section, we first explore both the weighted invariant properties and the automorphism invariant properties of quasi-overlap functions on bounded psosets. We then introduce the migrativity, the homogeneity, the idempotency, the cancellation law, the Archimedean and limiting properties of quasi-overlap functions, and discuss their relations.
\subsection{The weighted invariant properties}

On $[0,1]$, pointwise meet and join preserve overlap functions. However, the analogous statement fails on proper trellises illustrated by the following example.
\begin{example}\label{ex:pointwise-failure}
There exist $O_1,O_2\in\mathbb{QO}(\mathbb{T}_5)$ such that neither
$$
 H_\wedge(x,y)=O_1(x,y)\wedge O_2(x,y)
$$
nor
$$
 H_\vee(x,y)=O_1(x,y)\vee O_2(x,y)
$$
is a quasi-overlap function.

\begin{proof}
Take $O_1=O_{a,a}$ and $O_2=O_{b,c}$ defined by Example \ref{examp:T5-classification}.
Since $(a,a)\trianglelefteq_2(a,1)$, the increasingness would require $H_\wedge(a,a)\trianglelefteq H_\wedge(a,1)$. However,
$$
 H_\wedge(a,a)=a\wedge b=a,
 H_\wedge(a,1)=a\wedge c=c,
$$
and $a\ntrianglelefteq c$. Similarly,
$$
 H_\vee(a,a)=a\vee b=b,
 H_\vee(a,1)=a\vee c=a,
$$
but $b\ntrianglelefteq a$. Hence neither $H_\wedge$ nor $H_\vee$ is increasing, they are not quasi-overlap functions.
\end{proof}
\end{example}

It is well-known that overlap functions are preserved on the lattice-valued setting through weighting \cite{Paiva2021}. In what follows, we also discuss the weighted invariance of quasi-overlap functions on bounded psosets.

Let $S$ be a t-conorm on a bounded psoset $\mathbb{P}=(X,\trianglelefteq,0,1)$ and $n\geq2$. Denote by $S^{(n)}$ the associative $n$-ary extension, i.e., $$S^{(n)}(x_1, x_2, \cdots, x_n)=S(S^{(n-1)}(x_1, x_2, \cdots, x_{n-1}),x_n)$$ for any $x_1, x_2, \cdots, x_n\in X$. Then the following lemma is obvious.
\begin{lemma}\label{lem:nary-boundary}
Let $S$ be a t-conorm on a bounded psoset $\mathbb{P}=(X,\trianglelefteq,0,1)$. Then, for all $x_1,\ldots,x_n\in X$,
$$
\begin{aligned}
 S^{(n)}(x_1,\ldots,x_n)=0
 &\Longleftrightarrow x_1=\cdots=x_n=0.
\end{aligned}
$$
Moreover, if $S$ is positive, then
$$
 S^{(n)}(x_1,\ldots,x_n)=1
 \Longleftrightarrow
 \mbox{ there is an }i\in\{1,\ldots,n\}\mbox{ such that }x_i=1.
$$
\end{lemma}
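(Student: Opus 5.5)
The plan is induction on $n\geq2$, using the recursive definition $S^{(n)}(x_1,\ldots,x_n)=S\bigl(S^{(n-1)}(x_1,\ldots,x_{n-1}),x_n\bigr)$ and reducing both claims to the binary case. Throughout, the only properties of $S$ that will be used are that it is increasing with respect to $\trianglelefteq_2$ and has neutral element $0$, together with the fact that $0$ and $1$ are the bottom and top of $\mathbb{P}$. Commutativity is convenient but not strictly needed. Transitivity of $\trianglelefteq$ is never invoked, since every comparison below is a single step $u\trianglelefteq v$ obtained directly from increasingness.

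\textbf{Binary case of the first claim.} If $x=y=0$, then $S(0,0)=0$ because $0$ is neutral. Conversely, suppose $S(x,y)=0$. Since $(x,0)\trianglelefteq_2(x,y)$, increasingness gives $x=S(x,0)\trianglelefteq S(x,y)=0$. As $0$ is the smallest element, antisymmetry yields $x=0$. Symmetrically, $(0,y)\trianglelefteq_2(x,y)$ gives $y=0$.

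\textbf{Inductive step of the first claim.} We have $S^{(n)}(x_1,\ldots,x_n)=0$ if and only if $S^{(n-1)}(x_1,\ldots,x_{n-1})=0$ and $x_n=0$, by the binary case. By the induction hypothesis, the first condition holds if and only if $x_1=\cdots=x_{n-1}=0$, which completes the step.

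\textbf{Second claim, binary case.} For positive $S$ this is exactly the defining condition $S(x,y)=1\iff x=1$ or $y=1$.

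\textbf{Second claim, inductive step.} $S^{(n)}(x_1,\ldots,x_n)=1$ if and only if $S^{(n-1)}(x_1,\ldots,x_{n-1})=1$ or $x_n=1$. By the induction hypothesis, the first alternative holds if and only if some $x_i=1$ with $i\leq n-1$, which gives the claim.

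The only point requiring care is the binary direction $S(x,y)=0\Rightarrow x=y=0$. There one must avoid any chain argument $x\trianglelefteq\cdots\trianglelefteq 0$ that would need transitivity. Instead, the single comparison $x=S(x,0)\trianglelefteq S(x,y)=0$ is combined directly with antisymmetry against the bottom element. Everything else is bookkeeping in the induction.
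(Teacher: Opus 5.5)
Your proof is correct and is the same routine induction the paper leaves to the reader; the paper calls the lemma obvious and gives no argument of its own. You handle the binary case $S(x,y)=0\Rightarrow x=y=0$ by the single comparison $x=S(x,0)\trianglelefteq S(x,y)=0$ plus antisymmetry against $0$, which rightly avoids transitivity, and you obtain the second claim by propagating positivity of $S$ through the recursive definition of $S^{(n)}$.
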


\begin{definition}\label{def4.6}
\emph{Let $\mathbb{P}=(X,\trianglelefteq,0,1)$ be a bounded psoset, and $S$ be a t-conorm on $\mathbb{P}$. A vector $w=(w_1,\ldots,w_n)\in X^n$ is called an $S$-\emph{weight vector} if
$$
 S^{(n)}(w_1,\ldots,w_n)=1.
$$}
\end{definition}

Notice that if $S$ is a positive t-conorm on a bounded psoset
$\mathbb{P}=(X,\trianglelefteq,0,1)$, then Lemma~\ref{lem:nary-boundary} shows that
every $S$-weight vector $w=(w_1,\ldots,w_n)$ has at least one
component that is equal to $1$.

\begin{theorem}\label{thm:weighted}
Let $\mathbb{P}=(X,\trianglelefteq,0,1)$ be a bounded psoset, $T$ be a t-norm and $S$ a positive t-conorm on $\mathbb{P}$, and $O_1,\ldots,O_n\in\mathbb{QO}(\mathbb{P})$ with $n\geq2$. If $w=(w_1,\ldots,w_n)$ is an $S$-weight vector, then the binary operation $O_w:X^2\to X$ defined by
$$
 O_w(x,y)=S^{(n)}\bigl(T(w_1,O_1(x,y)),\ldots,T(w_n,O_n(x,y))\bigr)
$$
is a quasi-overlap function on $\mathbb{P}$.
\end{theorem}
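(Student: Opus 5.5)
The plan is to verify (QO1)--(QO4) of Definition~\ref{def:qo} directly for $O_w$. Throughout, I will use only the \emph{joint} increasingness of $T$ and $S$ with respect to $\trianglelefteq_2$, never chains of one-coordinate inequalities, since transitivity is unavailable.

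Two preliminary facts come first. (a) $T(w,0)=0$ and $T(w,z)\trianglelefteq z$ for all $w,z\in X$. This holds because $(w,z)\trianglelefteq_2(1,z)$ gives $T(w,z)\trianglelefteq T(1,z)=z$ by the neutral element $1$; with $z=0$, antisymmetry then forces $T(w,0)=0$. (b) $S^{(n)}$ is increasing in the $n$-ary product pseudo-order: if $a_i\trianglelefteq b_i$ for all $i$, then $S^{(n)}(a_1,\ldots,a_n)\trianglelefteq S^{(n)}(b_1,\ldots,b_n)$. I would prove this by induction on $n$. Assuming $S^{(n-1)}(a_1,\ldots,a_{n-1})\trianglelefteq S^{(n-1)}(b_1,\ldots,b_{n-1})$, the pair $\bigl(S^{(n-1)}(a_1,\ldots,a_{n-1}),a_n\bigr)$ is $\trianglelefteq_2$-below $\bigl(S^{(n-1)}(b_1,\ldots,b_{n-1}),b_n\bigr)$, so one application of the increasingness of $S$ concludes. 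This single joint step is exactly what avoids the lack of transitivity, and I regard it as the only delicate point.

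(QO1) is immediate from the commutativity of each $O_i$. For (QO4), suppose $(x,y)\trianglelefteq_2(x',y')$. Then $O_i(x,y)\trianglelefteq O_i(x',y')$ for each $i$. Hence $(w_i,O_i(x,y))\trianglelefteq_2(w_i,O_i(x',y'))$, so $T(w_i,O_i(x,y))\trianglelefteq T(w_i,O_i(x',y'))$ for every $i$, and fact (b) gives $O_w(x,y)\trianglelefteq O_w(x',y')$.

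For (QO2), first let $x=0$ or $y=0$. Then every $O_i(x,y)=0$, so every $T(w_i,0)=0$ by (a), and Lemma~\ref{lem:nary-boundary} gives $O_w(x,y)=0$. Conversely, if $O_w(x,y)=0$, Lemma~\ref{lem:nary-boundary} makes all $T(w_i,O_i(x,y))=0$. Since $S$ is positive, the remark after Definition~\ref{def4.6} supplies an index $k$ with $w_k=1$. Then $0=T(1,O_k(x,y))=O_k(x,y)$, and (QO2) for $O_k$ gives $x=0$ or $y=0$.

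For (QO3), first let $x=y=1$. Then $O_i(1,1)=1$, so $T(w_i,1)=w_i$, and $O_w(1,1)=S^{(n)}(w_1,\ldots,w_n)=1$ because $w$ is an $S$-weight vector. Conversely, if $O_w(x,y)=1$, positivity of $S$ and Lemma~\ref{lem:nary-boundary} give some $i$ with $T(w_i,O_i(x,y))=1$. By (a), $1\trianglelefteq O_i(x,y)$, so $O_i(x,y)=1$, and (QO3) for $O_i$ yields $x=y=1$.
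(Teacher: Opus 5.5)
Your proposal is correct and follows the paper's proof essentially step for step. (QO4) goes through the increasingness of $T$ in its second argument and then of $S^{(n)}$; (QO2) uses Lemma~\ref{lem:nary-boundary} together with a weight component equal to $1$; and (QO3) uses the positivity of $S$ and the weight-vector condition. Your inductive proof that $S^{(n)}$ is jointly increasing, and your use of $T(w,z)\trianglelefteq z$ to deduce $O_i(x,y)=1$, simply spell out steps the paper asserts without justification.
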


\begin{proof}
The commutativity of $O_w$ follows from that of the functions $O_i$. In the following, we first prove its increasingness. Let $(x,y)\trianglelefteq_2(x',y')$. Then
$$
 O_i(x,y)\trianglelefteq O_i(x',y')
$$ for all $i=1,\ldots,n$ since each $O_i$ is increasing.
By the increasingness of $T$,
$$
 T(w_i,O_i(x,y))\trianglelefteq T(w_i,O_i(x',y'))
 $$for all $i=1,\ldots,n$.
Hence, by the increasingness of $S^{(n)}$,
$$
 O_w(x,y)\trianglelefteq O_w(x',y').
$$
Therefore, $O_w$ satisfies (QO4).

Next, we verify (QO2). We complete it by two steps.

Step 1. If $x=0$ or $y=0$, then $O_i(x,y)=0$ for every $i=1,\ldots,n$, which implies
$$
 T(w_i,0)\trianglelefteq T(1,0)=0
$$since $w_i\trianglelefteq1$ and $T$ is increasing.
Thus $T(w_i,0)=0$. Therefore,
$$
 O_w(x,y)=S^{(n)}(0,\ldots,0)=0.
$$

Step 2. Suppose that $O_w(x,y)=0$. Then by Lemma~\ref{lem:nary-boundary},
$$
 T(w_i,O_i(x,y))=0
$$for all $i=1,\ldots,n$.
Because $w$ is an $S$-weight vector and $S$ is positive, Lemma~\ref{lem:nary-boundary} implies that there is an $i_0\in\{1,\ldots,n\}$ such that $w_{i_0}=1$. Hence
$$
 0=T(w_{i_0},O_{i_0}(x,y))
  =T(1,O_{i_0}(x,y))
  =O_{i_0}(x,y).
$$
By (QO2) for $O_{i_0}$, $x=0$ or $y=0$.

Finally, we verify (QO3). We complete it by two parts.

Part 1. Suppose that $O_w(x,y)=1$. Then from Lemma~\ref{lem:nary-boundary}, there is an $i_0\in\{1,\ldots,n\}$ such that
$$
 T(w_{i_0},O_{i_0}(x,y))=1
$$ since $S$ is positive.
Thus
$$
 w_{i_0}=1\mbox{ and }O_{i_0}(x,y)=1.
$$
By (QO3) for $O_{i_0}$, $x=y=1$.

Part 2. If $x=y=1$, then $O_i(1,1)=1$ for every $i=1,\ldots,n$, and
$$
\begin{aligned}
O_w(1,1)
&=S^{(n)}\bigl(T(w_1,1),\ldots,T(w_n,1)\bigr)\\
&=S^{(n)}(w_1,\ldots,w_n)\\
&=1
\end{aligned}
$$ since $w$ is an $S$-weight vector.

Therefore, from Definition~\ref{def:qo}, $O_w\in\mathbb{QO}(\mathbb{P})$.
\end{proof}

\subsection{Automorphic transforms}

Similarly to the theory of lattices, one easily gives the definition of an automorphism mapping on a bounded psoset:
Let $\mathbb{P}=(X,\trianglelefteq,0,1)$ be a bounded psoset. A bijection $\rho:X\to X$ is called an \emph{automorphism} of $\mathbb{P}$ if, for all $x,y\in X$,
$$
x\trianglelefteq y
\Longleftrightarrow
\rho(x)\trianglelefteq\rho(y).
$$
Since $\mathbb{P}$ is bounded, every automorphism necessarily fixes
the least and greatest elements, i.e., $\rho(0)=0$ and $\rho(1)=1$.

The following proposition shows that a quasi-overlap function is preserved under an automorphism.
\begin{proposition}\label{prop:automorphism}
Let $\mathbb{P}=(X,\trianglelefteq,0,1)$ be a bounded psoset, $\rho$ be an automorphism on $\mathbb{P}$ and $O\in\mathbb{QO}(\mathbb{P})$. Then the binary operation $O^\rho:X^2\to X$ defined by
\begin{equation}\label{eq1}
 O^\rho(x,y)=\rho^{-1}\bigl(O(\rho(x),\rho(y))\bigr)
\end{equation}
is a quasi-overlap function on $\mathbb{P}$.
\end{proposition}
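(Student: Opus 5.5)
The plan is to verify (QO1)--(QO4) for $O^\rho$ directly, using three facts about $\rho$: it is a bijection, it fixes $0$ and $1$, and both $\rho$ and $\rho^{-1}$ preserve $\trianglelefteq$. The last fact holds because $x\trianglelefteq y\Leftrightarrow\rho(x)\trianglelefteq\rho(y)$ gives, with $x=\rho^{-1}(u)$ and $y=\rho^{-1}(v)$, that $u\trianglelefteq v\Leftrightarrow\rho^{-1}(u)\trianglelefteq\rho^{-1}(v)$. I will first record explicitly why $\rho(0)=0$ and $\rho(1)=1$. Since $\rho$ is surjective, pick $z$ with $\rho(z)=0$. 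Then $0\trianglelefteq z$ gives $\rho(0)\trianglelefteq\rho(z)=0$, and antisymmetry yields $\rho(0)=0$. The case of $1$ is dual, and it follows that $\rho^{-1}(0)=0$ and $\rho^{-1}(1)=1$.

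Commutativity (QO1) is immediate from that of $O$. For (QO2), I will use injectivity of $\rho^{-1}$ together with $\rho^{-1}(0)=0$ to get the chain
$$O^\rho(x,y)=0\Longleftrightarrow O(\rho(x),\rho(y))=0\Longleftrightarrow \rho(x)=0\mbox{ or }\rho(y)=0\Longleftrightarrow x=0\mbox{ or }y=0.$$
The middle step is (QO2) for $O$, and the last uses injectivity of $\rho$ with $\rho(0)=0$. Condition (QO3) follows the same way with $1$ in place of $0$ and ``and'' in place of ``or''.

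For (QO4), let $(x,y)\trianglelefteq_2(x',y')$. Since $\rho$ preserves $\trianglelefteq$ coordinatewise, $(\rho(x),\rho(y))\trianglelefteq_2(\rho(x'),\rho(y'))$. Increasingness of $O$ then gives $O(\rho(x),\rho(y))\trianglelefteq O(\rho(x'),\rho(y'))$, and applying the order-preserving map $\rho^{-1}$ yields $O^\rho(x,y)\trianglelefteq O^\rho(x',y')$. No transitivity is used at any point; only single-step comparisons are transported.

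There is no real obstacle here. The only point needing care is that an automorphism is defined by a biconditional, which is exactly what makes $\rho^{-1}$ order-preserving. That property, and the fixing of $0$ and $1$, must be justified without appealing to transitivity, which the antisymmetry argument above does.
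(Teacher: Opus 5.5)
Your proof is correct and follows the paper's argument: (QO1) is immediate, (QO2)--(QO3) come from $\rho$ being a bijection fixing $0$ and $1$, and (QO4) is transported through the order-preserving maps $\rho$ and $\rho^{-1}$. You additionally justify $\rho(0)=0$, $\rho(1)=1$ and the monotonicity of $\rho^{-1}$ from the biconditional in the definition of automorphism without using transitivity, whereas the paper only asserts these facts.
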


\begin{proof}
Condition (QO1) is immediate. Since $\rho$ fixes $0$ and $1$, the conditions (QO2) and (QO3) are preserved. It remains to verify (QO4). Let $(x,y)\trianglelefteq_2(x',y')$. Then
$$
 (\rho(x),\rho(y))\trianglelefteq_2(\rho(x'),\rho(y'))
$$ since $\rho$ is order preserving.
The increasingness of $O$ yields
$$
 O(\rho(x),\rho(y))\trianglelefteq O(\rho(x'),\rho(y')).
$$
Because $\rho^{-1}$ is also order preserving, we obtain
$$
 O^\rho(x,y)\trianglelefteq O^\rho(x',y').
$$
Thus $O^\rho\in\mathbb{QO}(\mathbb{P})$.
\end{proof}

\begin{corollary}\label{cor:tnorm-generated}
Let $\mathbb{P}=(X,\trianglelefteq,0,1)$ be a bounded psoset and $T$ be a positive t-norm on $\mathbb{P}$. Let $\rho$ be an automorphism on $\mathbb{P}$, and $f:X\to X$ be an increasing mapping such that
$
 f(t)=0\Longleftrightarrow t=0\mbox{ and }
 f(t)=1\Longleftrightarrow t=1.
$
Then
$$
 O_{T,\rho,f}(x,y)=f\!\left(\rho^{-1}(T(\rho(x),\rho(y)))\right)
$$
is a quasi-overlap function on $\mathbb{P}$.
\end{corollary}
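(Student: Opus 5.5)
The plan is to show first that the positive t-norm $T$ is itself a quasi-overlap function, then apply Proposition~\ref{prop:automorphism}, and finally check that composing with $f$ keeps all four axioms.

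By Remark~\ref{remak:qotnorm}(i), $T\in\mathbb{QO}(\mathbb{P})$. Concretely, (QO1) and (QO4) are the commutativity and increasingness of $T$, and (QO2) is positivity. For (QO3), $T(1,1)=1$ by the neutral element. Conversely, if $T(x,y)=1$, then increasingness together with $y\trianglelefteq 1$ gives $1=T(x,y)\trianglelefteq T(x,1)=x$. Hence $x=1$ by antisymmetry, and symmetrically $y=1$. Proposition~\ref{prop:automorphism} then gives that $T^\rho(x,y)=\rho^{-1}(T(\rho(x),\rho(y)))$ belongs to $\mathbb{QO}(\mathbb{P})$. So it suffices to prove the general claim: if $O\in\mathbb{QO}(\mathbb{P})$ and $f$ is as stated, then $f\circ O\in\mathbb{QO}(\mathbb{P})$.

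For this composition step, the axioms are checked one at a time.
\begin{itemize}
\item (QO1) holds trivially.
\item (QO2): $f(O(x,y))=0\iff O(x,y)=0\iff x=0$ or $y=0$, using the boundary condition on $f$ and then (QO2) for $O$.
\item (QO3): $f(O(x,y))=1\iff O(x,y)=1\iff x=y=1$, in the same way.
\item (QO4): if $(x,y)\trianglelefteq_2(x',y')$, then $O(x,y)\trianglelefteq O(x',y')$. Since $f$ is increasing (a single $\trianglelefteq$-step is preserved, so no transitivity is needed), $f(O(x,y))\trianglelefteq f(O(x',y'))$.
\end{itemize}
Applying this to $O=T^\rho$ gives $O_{T,\rho,f}\in\mathbb{QO}(\mathbb{P})$.

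The only step needing care is the use of (QO3) for $T$. It relies on $T(x,1)=x$ combined with a single $\trianglelefteq$-comparison and antisymmetry. Because only one relation is involved, no chaining of relations occurs and the lack of transitivity causes no trouble. Everything else reduces directly to earlier results.
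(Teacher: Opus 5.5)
Your proof is correct and follows the paper's argument: both observe that the positive t-norm $T$ is a quasi-overlap function, apply Proposition~\ref{prop:automorphism} to get $T^\rho\in\mathbb{QO}(\mathbb{P})$, and then check that composing with the increasing, boundary-faithful $f$ preserves (QO1)--(QO4). Your explicit check of (QO3) for $T$, via $1=T(x,y)\trianglelefteq T(x,1)=x$ and antisymmetry, fills in a detail the paper leaves to Remark~\ref{remak:qotnorm}.
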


\begin{proof}
Because the positive t-norm $T$ is a quasi-overlap function on $\mathbb{P}$, it follows from Proposition \ref{prop:automorphism} that $T^\rho$ satisfies the conditions (QO2) and (QO3). Therefore, from the properties of $f$, the composition
$$
 O_{T,\rho,f}=f\circ T^\rho
$$
is commutative and increasing and satisfies (QO2) and (QO3), i.e., $O_{T,\rho,f}$ is a quasi-overlap function on $\mathbb{P}$.
\end{proof}

\begin{example}\label{prop:T5-orbits}
\emph{In Example \ref{examp:T5-classification}, we have that $$\mathbb{QO}(\mathbb{T}_5)=\{O_{\eta,\theta}:\eta\in C,\ \theta\in\{\eta,\sigma(\eta)\}\}=\{O_{a,a},O_{b,b},O_{c,c},O_{a,b},O_{b,c},O_{c,a}\}.$$
There are three automorphisms on $\mathbb{T}_5$, denote them by $\rho_1, \rho_2$ and $\rho_3$, as follows:
$$\rho_1: 0\rightarrow 0, a\rightarrow a, b\rightarrow b, c\rightarrow c, 1\rightarrow 1,$$
  $$\rho_2: 0\rightarrow 0, a\rightarrow b, b\rightarrow c, c\rightarrow a, 1\rightarrow 1,$$
 $$\rho_3: 0\rightarrow 0, a\rightarrow c, b\rightarrow a, c\rightarrow b, 1\rightarrow 1.$$
Applying Proposition \ref{prop:automorphism}, one can easily check that $$O_{a,a}{\underset{\rho_3}{\overset{\rho_2}{\rightleftarrows}}} O_{c,c}{\underset{\rho_3}{\overset{\rho_2}{\rightleftarrows}}} O_{b,b}, O_{a,b}{\underset{\rho_3}{\overset{\rho_2}{\rightleftarrows}}} O_{c,a}{\underset{\rho_3}{\overset{\rho_2}{\rightleftarrows}}} O_{b,c}.$$
Therefore, from the viewpoint of automorphism, $\mathbb{QO}(\mathbb{T}_5)$ can be exactly divided two classes as below:
 $$\{O_{a,a},O_{b,b},O_{c,c}\} \mbox{ and }
 \{O_{a,b},O_{b,c},O_{c,a}\}.
$$
In a completely analogous way, $\mathbb{QG}(\mathbb{T}_5)$ can be exactly divided two classes as below:
 $$\{G_{a,a},G_{b,b},G_{c,c}\} \mbox{ and }
 \{G_{a,b},G_{b,c},G_{c,a}\}.
$$}
\end{example}
\subsection{Migrativity}
We apply the generalized notion of migrativity introduced in \cite{Qiao2021} to binary operations on psosets as follows.
\begin{definition}[\cite{Qiao2021}]
\emph{Let $A,B,C:X^2\to X$ be three binary operations on a bounded psoset $\mathbb{P}=(X,\trianglelefteq,0,1)$ and $\alpha\in X$. The operation $A$ is said to be $(\alpha,B,C)$-\emph{migrative} if, for all $x,y\in X$,
$$
 A(B(\alpha,x),y)=A(x,C(\alpha,y)).
$$
Moreover, $A$ is said to be $(B,C)$-migrative if it is $(\alpha,B,C)$-\emph{migrative} for all $\alpha\in X$, in particular, it is said to be $B$-migrative if $B=C$.}
\end{definition}

Then we have the following statement which describes the $B$-migrativity of a binary operation.
\begin{proposition}\label{prop:migrative-factorization}
Let $\mathbb{P}=(X,\trianglelefteq,0,1)$ be a bounded psoset, $A:X^2\to X$ be a commutative binary operation and $B:X^2\to X$ be a commutative and associative binary operation with neutral element $e$. Then the following statements are equivalent:
\begin{enumerate}[label=(\roman*)]
\item $A$ is $B$-migrative;
\item $A(x,y)=A(e,B(x,y))$ for all $x,y\in X$;
\item There exists a unary operation $f:X\to X$ such that $A(x,y)=f(B(x,y))$ for all $x,y\in X$. In particular, one may take $f(x)=A(e,x)$ for all $x\in X$.
\end{enumerate}
\end{proposition}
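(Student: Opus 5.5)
We will establish the cycle of implications (i)$\Rightarrow$(ii)$\Rightarrow$(iii)$\Rightarrow$(i). No order-theoretic properties of $\mathbb{P}$ are needed: only the algebraic hypotheses on $A$ and $B$ enter. This is why the statement transfers verbatim from the lattice setting of \cite{Qiao2021} to a bounded psoset.

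For (i)$\Rightarrow$(ii), fix $x,y\in X$ and use $B$-migrativity with the parameter $\alpha=x$, evaluated at the pair $(e,y)$:
$$
 A(B(x,e),y)=A(e,B(x,y)).
$$
Since $e$ is the neutral element of $B$ and $B$ is commutative, $B(x,e)=x$. The left-hand side is therefore $A(x,y)$, which gives (ii). The only point needing care is choosing which variable plays the role of $\alpha$ and which argument is set to $e$. The definition reads $A(B(\alpha,x),y)=A(x,C(\alpha,y))$ with $C=B$, so one takes $\alpha:=x$ and replaces the first free variable by $e$.

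For (ii)$\Rightarrow$(iii), simply define $f(t)=A(e,t)$ for all $t\in X$. Then (ii) says exactly that $A(x,y)=f(B(x,y))$, which also justifies the final remark in (iii).

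For (iii)$\Rightarrow$(i), suppose $A=f\circ B$ for some unary $f$. Then for all $\alpha,x,y\in X$,
$$
 A(B(\alpha,x),y)=f\bigl(B(B(\alpha,x),y)\bigr)\quad\mbox{and}\quad A(x,B(\alpha,y))=f\bigl(B(x,B(\alpha,y))\bigr).
$$
It therefore suffices to show $B(B(\alpha,x),y)=B(x,B(\alpha,y))$. By commutativity, $B(B(\alpha,x),y)=B(B(x,\alpha),y)$. By associativity, this equals $B(x,B(\alpha,y))$. Hence $A$ is $(\alpha,B,B)$-migrative for every $\alpha$, i.e. $B$-migrative.

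There is no genuine obstacle here. The main thing to watch is the bookkeeping of argument positions in the migrativity identity when commutativity of $B$ is invoked. The commutativity of $A$ is not actually needed for these implications; it is harmless to keep it as a standing assumption.
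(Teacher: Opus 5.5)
Your proof is correct and follows the paper's argument: the same cycle (i)$\Rightarrow$(ii)$\Rightarrow$(iii)$\Rightarrow$(i), with $f(t)=A(e,t)$ and the identity $B(B(\alpha,x),y)=B(x,B(\alpha,y))$ obtained from the commutativity and associativity of $B$. The only difference is the substitution in (i)$\Rightarrow$(ii): the paper takes $\alpha=y$, $x=e$, $y=x$ and then invokes commutativity of $A$ and $B$, whereas your choice $\alpha=x$ gives (ii) directly, which bears out your correct remark that commutativity of $A$ is not actually needed.
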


\begin{proof}
Taking $\alpha=y$, $x=e$ and $y=x$, it follows from the commutativity of both $A$ and $B$ that (i) implies (ii).

If (ii) holds, then defining $f(x)=A(e,x)$ for all $x\in X$ leads to
$$
 A(x,y)=f(B(x,y)),
$$
and hence (iii) follows.

Finally, suppose that (iii) holds. Then for all $\alpha,x,y\in X$, the associativity and commutativity of $B$ yield
$$
\begin{aligned}
A(B(\alpha,x),y)
&=f(B(B(\alpha,x),y))\\
&=f(B(x,B(\alpha,y)))\\
&=A(x,B(\alpha,y)).
\end{aligned}
$$
Thus $A$ is $B$-migrative.
\end{proof}

In general, the associativity of $B$ in Proposition \ref{prop:migrative-factorization} cannot be deleted as shown by the following example.
\begin{example}\label{ex:migrativity-assoc-essential}
\upshape
On $\mathbb{T}_5$ in Example~\ref{ex:T5}, let $A=B=\wedge$. Clearly, $B$ is not associative with  neutral element $1$. Then $A=\operatorname{id}\circ B$, so Proposition~\ref{prop:migrative-factorization} (iii) holds. We have
$$
 A(B(b,a),c)=(b\wedge a)\wedge c=a\wedge c=c,
$$
whereas
$$
 A(a,B(b,c))=a\wedge(b\wedge c)=a\wedge b=a.
$$
Thus $A$ is not $B$-migrative.
\end{example}

In particular, the following theorem characterizes a $T$-migrative quasi-overlap function, where $T$ is a t-norm on a bounded psoset.
\begin{theorem}\label{thm:tnorm-migrative}
Let $T:X^2\to X$ be a t-norm and $O:X^2\to X$ a binary operation on a bounded psoset $\mathbb{P}=(X,\trianglelefteq,0,1)$. The following statements are equivalent:
\begin{enumerate}[label=(\roman*)]
\item $O$ is a $T$-migrative quasi-overlap function;
\item $T$ is positive and there exists an increasing mapping $f:X\to X$ such that
$f(x)=0\Longleftrightarrow x=0,
f(x)=1\Longleftrightarrow x=1$
and
$O(x,y)=f(T(x,y))$ for all $x,y\in X$.
Moreover, the  mapping $f:X\to X$ is unique and is given by $f(x)=O(1,x)$.
\end{enumerate}
\end{theorem}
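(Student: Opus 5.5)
We prove the two implications separately, relying on Proposition~\ref{prop:migrative-factorization} with $B=T$ and $e=1$. This proposition applies because $T$ is commutative and associative with neutral element $1$, and a quasi-overlap function is commutative.

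\textbf{(i)$\Rightarrow$(ii).} Suppose $O$ is a $T$-migrative quasi-overlap function. Proposition~\ref{prop:migrative-factorization} gives $O(x,y)=O(1,T(x,y))$ for all $x,y$. So we set $f(x)=O(1,x)$ and obtain $O=f\circ T$. The required properties of $f$ follow from the properties of $O$:
\begin{itemize}[label=$\circ$]
\item $f$ is increasing. If $x\trianglelefteq x'$, then $(1,x)\trianglelefteq_2(1,x')$, and (QO4) gives $f(x)\trianglelefteq f(x')$.
\item $f(x)=0$ iff $x=0$. This is (QO2) applied to $O(1,x)$, since $1\neq0$.
\item $f(x)=1$ iff $x=1$. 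This is (QO3) applied to $O(1,x)$.
\end{itemize}
For positivity of $T$, note that $T(x,y)=0$ iff $f(T(x,y))=0$ iff $O(x,y)=0$ iff $x=0$ or $y=0$.

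\textbf{Uniqueness of $f$.} If $O=g\circ T$ for some $g$, then $O(1,x)=g(T(1,x))=g(x)$. Hence $g$ must equal $x\mapsto O(1,x)$.

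\textbf{(ii)$\Rightarrow$(i).} Suppose $T$ is positive and $O=f\circ T$ with $f$ as in (ii).
\begin{itemize}[label=$\circ$]
\item $T$-migrativity follows from Proposition~\ref{prop:migrative-factorization}, (iii)$\Rightarrow$(i).
\item Commutativity of $O$ follows from that of $T$.
\item Increasingness is composition: $T$ is increasing with respect to $\trianglelefteq_2$ by definition of a t-norm, and $f$ is increasing.
\item (QO2) holds because $O(x,y)=0$ iff $T(x,y)=0$ iff $x=0$ or $y=0$, by positivity of $T$.
\end{itemize}
Equivalently, the conditions (QO1), (QO2) and (QO4) are exactly what Corollary~\ref{cor:tnorm-generated} gives with $\rho$ the identity.

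The only nonroutine step is (QO3), namely showing that $T(x,y)=1$ iff $x=y=1$; this is needed because (ii) assumes positivity only at $0$.
\begin{itemize}[label=$\circ$]
\item The direction $T(1,1)=1$ is immediate.
\item Conversely, suppose $T(x,y)=1$. From $y\trianglelefteq1$ and increasingness, $1=T(x,y)\trianglelefteq T(x,1)=x$. Antisymmetry then gives $x=1$, and symmetrically $y=1$.
\end{itemize}
This argument uses only increasingness and the neutral element, with no transitivity, so it goes through on a psoset. Finally, $O(x,y)=1$ iff $T(x,y)=1$ iff $x=y=1$, which is (QO3).
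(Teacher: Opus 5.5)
Your proposal is correct and follows essentially the same route as the paper. Both directions go through Proposition~\ref{prop:migrative-factorization} with $B=T$ and $e=1$, and the boundary and monotonicity conditions are checked directly. Your argument that $T(x,y)=1$ forces $x=y=1$ (via $1=T(x,y)\trianglelefteq T(x,1)=x$ and antisymmetry, with no transitivity used) supplies a step the paper's proof asserts without justification.
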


\begin{proof}
$(i)\Rightarrow(ii)$.
Assume that $O$ is a $T$-migrative quasi-overlap function. Since $T$
is commutative, associative, and has the neutral element $1$,
 according to Proposition~\ref{prop:migrative-factorization},
$$O(x,y)=f(T(x,y))\mbox{ with }f(x)=O(1,x)\mbox{ for all $x,y\in X$}.$$
As $O$ is increasing, so is $f$. Moreover, by (QO2) and (QO3),
$f(x)=0\Longleftrightarrow x=0, f(x)=1\Longleftrightarrow x=1.$

It remains to verify that $T$ is positive. If $T(x,y)=0$,
$$
 O(x,y)=f(0)=0,
$$
by (QO2), $x=0$ or $y=0$, thus $T$ is positive.

The uniqueness of $f$ is obvious.

$(ii)\Rightarrow(i)$. Suppose that $T$ is positive and $f:X\to X$ is increasing, and they satisfy
$f(x)=0\Longleftrightarrow x=0,
 f(x)=1\Longleftrightarrow x=1$
and
$$
 O(x,y)=f(T(x,y))\mbox{ for all $x,y\in X$}.
$$
Since both $T$ and $f$ are increasing and $T$ is commutative, $O$ is
increasing and commutative. Furthermore,
$$
\begin{aligned}
 O(x,y)=0
 &\Longleftrightarrow T(x,y)=0\\
 &\Longleftrightarrow x=0\mbox{ or }y=0,
\end{aligned}
$$
so (QO2) holds. Meanwhile,
$$
\begin{aligned}
 O(x,y)=1
 &\Longleftrightarrow T(x,y)=1\\
 &\Longleftrightarrow x=y=1.
 \end{aligned}
$$
Hence $O$ is a quasi-overlap function. Therefore, $O$ is $T$-migrative directly from Proposition~\ref{prop:migrative-factorization}.
\end{proof}

\subsection{Homogeneity}
We apply the generalized notion of homogeneity introduced in \cite{Qiao2021} to binary operations on psosets as follows.
\begin{definition}[\cite{Qiao2021}]\label{def:homogeneity}
\emph{Let $A,B,C:X^2\to X$ be three binary operations on a  bounded psoset $\mathbb{P}=(X,\trianglelefteq,0,1)$. Then $A$ is said to be $(B,C)$-\emph{homogeneous} if, for all $\lambda,x,y\in X$, it satisfies}
$$
 A(B(\lambda,x),B(\lambda,y))=C(\lambda,A(x,y)).
$$
\end{definition}

Then we first have the following proposition.
\begin{proposition}\label{prop:homogeneity-basic}
Let $A,B,C:X^2\to X$ be three binary operations on a bounded psoset $\mathbb{P}=(X,\trianglelefteq,0,1)$ where $B$ has $e$ as its neutral element. Let $A$ be $(B,C)$-homogeneous. Then
\begin{enumerate}[label=(\roman*)]
    \item $A(x,x)=C(x,A(e,e))$ for any $x\in X$;
    \item $A(x,y)=C(e,A(x,y))$ for any $x\in X$.
\end{enumerate}
\end{proposition}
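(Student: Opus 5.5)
Both parts will come from specializing the homogeneity identity of Definition~\ref{def:homogeneity},
$$
A(B(\lambda,x),B(\lambda,y))=C(\lambda,A(x,y)),
$$
and then using that $e$ is the neutral element of $B$. That is, $B(e,z)=B(z,e)=z$ for all $z\in X$. No order-theoretic properties of $\trianglelefteq$ are needed, so the lack of transitivity causes no trouble. The argument is purely equational.

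For (i), fix $x\in X$ and take $\lambda=x$ and $x=y=e$ in the identity. The left-hand side becomes $A(B(x,e),B(x,e))$. Since $e$ is neutral for $B$, this equals $A(x,x)$. The right-hand side is $C(x,A(e,e))$. Together these give $A(x,x)=C(x,A(e,e))$.

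For (ii), take $\lambda=e$ with $x,y$ arbitrary. The left-hand side is $A(B(e,x),B(e,y))=A(x,y)$, again by neutrality of $e$. The right-hand side is $C(e,A(x,y))$, so $A(x,y)=C(e,A(x,y))$ for all $x,y\in X$. (The statement says ``for any $x\in X$'', but the identity in fact holds for all pairs $x,y$.)

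The only point to watch is which side of $B$ the element $e$ occupies in each substitution. In (i) we use right neutrality, $B(x,e)=x$, and in (ii) left neutrality, $B(e,x)=x$. Both are covered because $e$ is a two-sided neutral element, so I expect no real obstacle; the proof is two substitutions.
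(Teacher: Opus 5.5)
Your proof is correct and matches the paper's, which simply says both identities follow immediately from Definition~\ref{def:homogeneity}. Your substitutions are exactly that derivation: $\lambda=x$ with both arguments equal to $e$ for (i), and $\lambda=e$ for (ii), using the two-sided neutrality of $e$. Your observation that (ii) holds for all $x,y\in X$ is also right.
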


\begin{proof}
It can be obtained immediately from Definition~\ref{def:homogeneity}.
\end{proof}

\begin{corollary}\label{cor:homogeneity-diagonal}
Let $\mathbb{T}=(X,\trianglelefteq,\wedge,\vee,0,1)$ be a bounded trellis, $O\in\mathbb{QO}(\mathbb{T})$. Let $B,C:X^2\to X$ be two binary operations on $\mathbb{T}$ where $B$ has $1$ as its neutral element. If $O$ is $(B,C)$-homogeneous, then
$$
 O(x,x)=C(x,1)
$$
for every $x\in X$.
\end{corollary}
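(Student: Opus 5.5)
The plan is to specialize Proposition~\ref{prop:homogeneity-basic}(i) to $A=O$ and $e=1$, and then remove the inner value $O(1,1)$ using (QO3).

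First, since $B$ has neutral element $1$ and $O$ is $(B,C)$-homogeneous, Proposition~\ref{prop:homogeneity-basic}(i) gives
$$
 O(x,x)=C\bigl(x,O(1,1)\bigr)
$$
for every $x\in X$. To see this directly, put $\lambda=x$ and replace both arguments by $1$ in Definition~\ref{def:homogeneity}. This yields $O(B(x,1),B(x,1))=C(x,O(1,1))$, and $B(x,1)=x$ because $1$ is neutral for $B$.

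Second, by (QO3) of Definition~\ref{def:qo}, $O(1,1)=1$ since $x=y=1$. Substituting this gives $O(x,x)=C(x,1)$, as claimed.

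No step is a genuine obstacle. We only need to note that the trellis structure plays no role: the argument uses only neutrality of $1$ for $B$ and (QO3), so it holds verbatim on any bounded psoset.
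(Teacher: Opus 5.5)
Your proof is correct and is the argument the paper intends: specialize Proposition~\ref{prop:homogeneity-basic}(i) to $A=O$ and $e=1$ (taking $\lambda=x$ and both arguments equal to $1$), then use $O(1,1)=1$ from (QO3). Your remark that only the neutrality of $1$ for $B$ and (QO3) are used, so the statement holds on any bounded psoset and not just on a bounded trellis, is also accurate.
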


Next, we give three sufficient conditions for quasi-overlap functions to be $(B,C)$-homogeneous.
\begin{proposition}
Let $\mathbb{P}=(X,\trianglelefteq,0,1)$ be a bounded psoset, $T$ be a t-norm and $S$ a positive t-conorm on $\mathbb{P}$. Let $O_1,\ldots,O_n\in\mathbb{QO}(\mathbb{P})$ with $n\geq2$. If every $O_i$ be $(B,C)$-homogeneous, $w=(w_1,\ldots,w_n)$ be an $S$-weight vector, and for all $\lambda,z,z_1,\ldots,z_n\in X$ the following two equalities hold:
\begin{enumerate}[label=(\roman*)]
    \item $T(w_i,C(\lambda,z))=C(\lambda,T(w_i,z))\mbox{ for every }i=1,\ldots,n$;
\item $\begin{aligned}
&S^{(n)}(C(\lambda,z_1),\ldots,C(\lambda,z_n))
=C(\lambda,S^{(n)}(z_1,\ldots,z_n)).\end{aligned}$
\end{enumerate}
Then $O_w$ is $(B,C)$-homogeneous.
\end{proposition}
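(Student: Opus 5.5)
The plan is a direct computation from the definition of $O_w$ in Theorem~\ref{thm:weighted}, pushing the factor $C(\lambda,\cdot)$ outward one layer at a time. Note that we do not need to re-prove that $O_w$ is a quasi-overlap function, since Theorem~\ref{thm:weighted} already gives this. We only need to verify the identity in Definition~\ref{def:homogeneity}:
$$
O_w(B(\lambda,x),B(\lambda,y))=C(\lambda,O_w(x,y)) \quad \mbox{for all } \lambda,x,y\in X .
$$

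Fix $\lambda,x,y\in X$ and proceed in three steps.

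First, we use the $(B,C)$-homogeneity of each $O_i$. This rewrites every inner argument as
$$
O_i(B(\lambda,x),B(\lambda,y))=C(\lambda,O_i(x,y)),
$$
so that
$$
O_w(B(\lambda,x),B(\lambda,y))=S^{(n)}\bigl(T(w_1,C(\lambda,O_1(x,y))),\ldots,T(w_n,C(\lambda,O_n(x,y)))\bigr).
$$

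Second, we apply hypothesis (i) with $z=O_i(x,y)$ for each $i$. This gives
$$
T(w_i,C(\lambda,O_i(x,y)))=C(\lambda,T(w_i,O_i(x,y))).
$$
The expression therefore becomes $S^{(n)}(C(\lambda,z_1),\ldots,C(\lambda,z_n))$ with $z_i=T(w_i,O_i(x,y))$.

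Third, hypothesis (ii) moves $C(\lambda,\cdot)$ outside $S^{(n)}$. This yields $C(\lambda,S^{(n)}(z_1,\ldots,z_n))=C(\lambda,O_w(x,y))$, which is the required identity.

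There is no genuine obstacle. The only point needing care is bookkeeping: in each step the equality must be applied pointwise to exactly the right arguments. Because all steps are equalities of elements rather than order inequalities, the lack of transitivity of $\trianglelefteq$ plays no role. Hypotheses (i) and (ii) hold for all $z,z_1,\ldots,z_n$, so they may be instantiated at the specific values $O_i(x,y)$ and $T(w_i,O_i(x,y))$ without further justification.
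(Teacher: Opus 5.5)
Your proposal is correct and matches the paper's proof: the same chain of equalities, using the $(B,C)$-homogeneity of each $O_i$, then hypothesis (i) with $z=O_i(x,y)$, then hypothesis (ii) with $z_i=T(w_i,O_i(x,y))$. You are also right that the lack of transitivity plays no role, since only equalities are used.
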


\begin{proof}
Note that for all $\lambda,x,y\in X$,
$$
 O_i(B(\lambda,x),B(\lambda,y))
 =C(\lambda,O_i(x,y))
 $$ for every $i=1,\ldots,n$ since each $O_i$ is $(B,C)$-homogeneous.
Thus from Theorem \ref{thm:weighted},
$$
\begin{aligned}
&O_w(B(\lambda,x),B(\lambda,y))\\
&=S^{(n)}\bigl(
T(w_1,C(\lambda,O_1(x,y))),\ldots,
T(w_n,C(\lambda,O_n(x,y)))
\bigr)\\
&=S^{(n)}\bigl(
C(\lambda,T(w_1,O_1(x,y))),\ldots,
C(\lambda,T(w_n,O_n(x,y)))
\bigr)\\
&=C\!\left(
\lambda,
S^{(n)}\bigl(
T(w_1,O_1(x,y)),\ldots,
T(w_n,O_n(x,y))
\bigr)
\right)\\
&=C(\lambda,O_w(x,y)).
\end{aligned}
$$
Therefore, $O_w$ is $(B,C)$-homogeneous.
\end{proof}

\begin{proposition}\label{prop:homogeneity-transport}
Let $\mathbb{P}=(X,\trianglelefteq,0,1)$ be a bounded psoset, $O\in\mathbb{QO}(\mathbb{P})$ and $\rho$ be an automorphism on $\mathbb{P}$. If $O$ is $(B,C)$-homogeneous, then $O^\rho$ defined by Eq.(\ref{eq1}) is $(B^\rho,C^\rho)$-homogeneous.
\end{proposition}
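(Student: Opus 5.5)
The plan is to prove the homogeneity identity for $O^\rho$ by direct computation. We interpret $B^\rho$ and $C^\rho$ in the same way as Eq.~(\ref{eq1}), that is,
$B^\rho(x,y)=\rho^{-1}\bigl(B(\rho(x),\rho(y))\bigr)$ and $C^\rho(x,y)=\rho^{-1}\bigl(C(\rho(x),\rho(y))\bigr)$. Only two facts are needed: $\rho$ is a bijection, so $\rho\circ\rho^{-1}=\rho^{-1}\circ\rho=\operatorname{id}$, and $O$ satisfies the $(B,C)$-homogeneity identity of Definition~\ref{def:homogeneity}. No order-theoretic property of $\trianglelefteq$ is used. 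In particular the argument does not rely on transitivity, so the psoset setting causes no difficulty.

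Fix $\lambda,x,y\in X$. First, by the definition of $B^\rho$,
$$\rho\bigl(B^\rho(\lambda,x)\bigr)=B(\rho(\lambda),\rho(x)),$$
and the same holds with $y$ in place of $x$. Substituting these into the definition of $O^\rho$ gives
$$O^\rho\bigl(B^\rho(\lambda,x),B^\rho(\lambda,y)\bigr)=\rho^{-1}\Bigl(O\bigl(B(\rho(\lambda),\rho(x)),B(\rho(\lambda),\rho(y))\bigr)\Bigr).$$
Next, apply the $(B,C)$-homogeneity of $O$ with $\rho(\lambda),\rho(x),\rho(y)$ in the roles of $\lambda,x,y$. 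This rewrites the right-hand side as
$$\rho^{-1}\Bigl(C\bigl(\rho(\lambda),O(\rho(x),\rho(y))\bigr)\Bigr).$$
Finally, write $O(\rho(x),\rho(y))=\rho\bigl(O^\rho(x,y)\bigr)$. The expression then becomes
$$\rho^{-1}\Bigl(C\bigl(\rho(\lambda),\rho(O^\rho(x,y))\bigr)\Bigr)=C^\rho\bigl(\lambda,O^\rho(x,y)\bigr).$$

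Since $\lambda,x,y$ were arbitrary, this is exactly $(B^\rho,C^\rho)$-homogeneity of $O^\rho$. Proposition~\ref{prop:automorphism} already shows that $O^\rho\in\mathbb{QO}(\mathbb{P})$, so nothing further needs to be checked.

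The main point requiring care is fixing the meaning of $B^\rho$ and $C^\rho$ as conjugates by $\rho$. Once that convention is set, the rest is routine bookkeeping with $\rho$ and $\rho^{-1}$.
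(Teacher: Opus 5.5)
Your proposal is correct and follows essentially the same route as the paper: the same chain of equalities, which cancels $\rho\circ\rho^{-1}$, applies the $(B,C)$-homogeneity of $O$ at $\rho(\lambda),\rho(x),\rho(y)$, and then recognizes $C^\rho(\lambda,O^\rho(x,y))$. You also state explicitly the conjugation convention for $B^\rho$ and $C^\rho$, which the paper uses only implicitly.
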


\begin{proof}
Note that for all $\lambda,x,y\in X$,
$$
\begin{aligned}
O^\rho(B^\rho(\lambda,x),B^\rho(\lambda,y))
&=\rho^{-1}\!\left(
 O(\rho(B^{\rho}(\lambda,x)),\rho(B^{\rho}(\lambda,y)))
 \right)\\
&=\rho^{-1}\!\left(
 O(\rho\circ\rho^{-1}(B(\rho(\lambda),\rho(x))),\rho\circ\rho^{-1}(B(\rho(\lambda),\rho(y))))
 \right)\\
 &=\rho^{-1}\!\left(
 O(B(\rho(\lambda),\rho(x)),B(\rho(\lambda),\rho(y)))
 \right)\\
&=\rho^{-1}\!\left(
 C(\rho(\lambda),O(\rho(x),\rho(y)))
 \right)\\
&=\rho^{-1}\!\left(
 C(\rho(\lambda),\rho\circ\rho^{-1}(O(\rho(x),\rho(y))))
 \right)\\
 &=\rho^{-1}\!\left(
 C(\rho(\lambda),\rho(O^{\rho}(x,y)))
 \right)\\
&=C^\rho(\lambda,O^\rho(x,y)).
\end{aligned}$$
Hence $O^\rho$ is $(B^\rho,C^\rho)$-homogeneous.
\end{proof}

Let $F:X^2\to X$ be a binary operation with the neutral element $1$ on a bounded psoset $\mathbb{P}=(X,\trianglelefteq,0,1)$ and define
$$\lambda_F^{(0)}=1, \lambda_F^{(1)}=\lambda, \lambda_F^{(n)}=F(\lambda_F^{(n-1)},\lambda) \mbox{ with }n\ge2.$$
Then similar to \cite{Qiao2021}, we can give the definition of an $F^k$-homogeneous binary operation: a binary operation $A:X^2\to X$ is said to be $F^k$-\emph{homogeneous} if, for all $\lambda,x,y\in X$,
$$
 A(F(\lambda,x),F(\lambda,y))
 =F(\lambda_F^{(k)},A(x,y)).
$$

\begin{proposition}\label{prop:tnorm-homogeneous}
Let $\mathbb{P}=(X,\trianglelefteq,0,1)$ be a bounded psoset and let $T$ be a positive t-norm on $\mathbb{P}$. Then $T$ is a quasi-overlap function and is $T^2$-homogeneous.
\end{proposition}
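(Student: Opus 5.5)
The statement has two parts, and I will treat them separately. The first part, that a positive t-norm $T$ is a quasi-overlap function, is Remark~\ref{remak:qotnorm}(i). For completeness I would check it directly.

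\textbf{$T$ is a quasi-overlap function.}
\begin{itemize}
\item (QO1) and (QO4) are the commutativity and increasingness built into the definition of a t-norm.
\item (QO2) is exactly positivity.
\item For (QO3), the direction $x=y=1\Rightarrow T(1,1)=1$ is immediate from the neutral element.
\item For the converse, assume $T(x,y)=1$. Since $(x,y)\trianglelefteq_2(x,1)$ and $T$ is increasing, $1=T(x,y)\trianglelefteq T(x,1)=x$. Antisymmetry with $x\trianglelefteq 1$ then gives $x=1$, and symmetrically $y=1$.
\end{itemize}
Only antisymmetry and the bounds are used here, never transitivity, so the argument is valid on an arbitrary bounded psoset.

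\textbf{$T$ is $T^2$-homogeneous.} By definition, $\lambda_T^{(2)}=T(\lambda_T^{(1)},\lambda)=T(\lambda,\lambda)$. So we must show, for all $\lambda,x,y\in X$,
$$T(T(\lambda,x),T(\lambda,y))=T(T(\lambda,\lambda),T(x,y)).$$
Both sides are $T$-products of the multiset $\{\lambda,\lambda,x,y\}$. Since $T$ is associative and commutative (this is part of the definition of a t-norm on a psoset, even though $\wedge$ itself need not be associative), the standard rearrangement applies:
$$T(T(\lambda,x),T(\lambda,y))=T(\lambda,T(x,T(\lambda,y)))=T(\lambda,T(T(x,\lambda),y))=T(\lambda,T(T(\lambda,x),y))=T(\lambda,T(\lambda,T(x,y)))=T(T(\lambda,\lambda),T(x,y)).$$
Each step uses only associativity or commutativity of $T$.

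The main obstacle is bookkeeping rather than mathematics. One has to read $\lambda_F^{(2)}$ correctly as $T(\lambda,\lambda)$, and make sure that only the algebraic axioms of $T$ are used in this part. The order-theoretic failures on psosets, such as the lack of transitivity, do not enter the homogeneity calculation at all.
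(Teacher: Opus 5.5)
Your proof is correct and follows the paper's route. The paper just says the quasi-overlap part is easy to check, and your (QO3) argument via $T(x,y)\trianglelefteq T(x,1)=x$ plus antisymmetry fills that in properly. Your homogeneity computation is the paper's associativity/commutativity rearrangement to $T(T(\lambda,\lambda),T(x,y))$, with the intermediate steps written out explicitly.
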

\begin{proof}
It is easy to check that $T$ is a quasi-overlap function.
Using the associativity and commutativity of $T$, we obtain
$$
\begin{aligned}
T(T(\lambda,x),T(\lambda,y))
&=T(\lambda,T(x,T(\lambda,y)))\\
&=T(\lambda,T(\lambda,T(x,y)))\\
&=T(T(\lambda,\lambda),T(x,y)).
\end{aligned}
$$
Therefore, $T$ is $T^2$-homogeneous.
\end{proof}

\subsection{Idempotency and cancellation law}

In an analogous way to \cite{Klement2000}, we may give the following definition on a bounded psoset.
\begin{definition}
\emph{Let $\mathbb{P}=(X,\trianglelefteq,0,1)$ be a bounded psoset and $O\in\mathbb{QO}(\mathbb{P})$. An element $x\in X$ is called an idempotent element of $O$ if $O(x,x)=x$. If every $x\in X$ is an idempotent element of $O$, then $O$ is called \emph{idempotent}. The numbers $0$ and $1$ (which are idempotent elements of any quasi-overlap function $O$) are called trivial idempotent elements of $O$, each idempotent element in $X\setminus \{0,1\}$ will be called a non-trivial idempotent element of $O$.}
\end{definition}

No quasi-overlap function on $\mathbb{T}_5$ is idempotent. Indeed,
let $O=O_{\eta,\theta}\in\mathbb{QO}(\mathbb{T}_5)$. By Example~\ref{examp:T5-classification},
$ O(x,x)=\eta \mbox{ for every }x\in C.$
If $O$ is idempotent, then $\eta=x$ for every $x\in C$, which is impossible.

The following proposition gives a sufficient condition that a bounded trellis is lattice by an idempotent quasi-overlap function, which can be seen as a generalization of Proposition 4.2 of \cite{Zedam2023} when the t-norm is positive.
\begin{proposition}\label{prop:idempotent-forces-lattice}
Let $\mathbb{T}=(X,\trianglelefteq,\wedge,\vee,0,1)$ be a bounded trellis. If $O\in\mathbb{QO}(\mathbb{T})$ is idempotent and has the neutral element $1$, then $O=\wedge$ and $\mathbb{T}$ is a bounded lattice.
\end{proposition}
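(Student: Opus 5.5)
The plan is to show first that $O(x,y)=x\wedge y$ for all $x,y\in X$, and then to deduce that $\mathbb{T}$ is a lattice. For the second step I will show that $\wedge$ is associative and invoke Theorem~\ref{thm:skala-equivalence}.

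\textbf{Step 1: $O(x,y)$ is a lower bound of $x$ and $y$.} Fix $x,y\in X$. Since $y\trianglelefteq 1$ and $x\trianglelefteq x$, we have $(x,y)\trianglelefteq_2(x,1)$. Axiom (QO4) and the neutral element $1$ then give $O(x,y)\trianglelefteq O(x,1)=x$. Symmetrically, $(x,y)\trianglelefteq_2(1,y)$ gives $O(x,y)\trianglelefteq y$.

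\textbf{Step 2: $O(x,y)$ is the greatest lower bound.} Let $z$ be any lower bound, so $z\trianglelefteq x$ and $z\trianglelefteq y$. Then $(z,z)\trianglelefteq_2(x,y)$, and by increasingness and idempotency,
$$z=O(z,z)\trianglelefteq O(x,y).$$
Hence $O(x,y)$ is a lower bound dominating every lower bound. By antisymmetry the greatest lower bound is unique, so $O(x,y)=x\wedge y$. No transitivity is used here: each comparison is a single application of $\trianglelefteq_2$.

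\textbf{Step 3: $\mathbb{T}$ is a lattice.} This is where I expect the main obstacle. A quasi-overlap function need not be associative, so the associativity of $\wedge$ cannot be read off directly. I would first try to get transitivity of $\trianglelefteq$ straight from increasingness of $\wedge$, which is now known because $\wedge=O$ satisfies (QO4). Suppose $x\trianglelefteq y$ and $y\trianglelefteq z$.
\begin{itemize}
\item In a trellis $x\trianglelefteq y$ gives $x\wedge y=x$, since $x$ is a lower bound of $x,y$ above every lower bound.
\item The pairs compare as $(x,y)\trianglelefteq_2(y,z)$.
\item Increasingness of $\wedge$ gives $x=x\wedge y\trianglelefteq y\wedge z=y$. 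That is only the hypothesis again, so this choice of pairs is useless.
\item Instead use $(x,x)\trianglelefteq_2(y,z)$, which holds because $x\trianglelefteq y$ and $x\trianglelefteq z$. But $x\trianglelefteq z$ is exactly the claim, so this choice is circular.
\end{itemize}
Next I try the pairs $(x,z)$ and $(y,z)$. We have $(x,z)\trianglelefteq_2(y,z)$ from $x\trianglelefteq y$ and $z\trianglelefteq z$. Hence
$$x\wedge z\trianglelefteq y\wedge z=y.$$
Also $x\wedge z\trianglelefteq x$. This alone still does not give $x\trianglelefteq z$.

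I would then use $(x,x)\trianglelefteq_2(x,y)$ in the comparison $x=x\wedge x\trianglelefteq\,\cdot\,$, and, more promisingly, the pair comparison $(x,y)\trianglelefteq_2(y,z)$ applied to the meet. If this route keeps failing, I would fall back on proving associativity directly. The idea is that $(x\wedge y)\wedge z$ and $x\wedge(y\wedge z)$ are both lower bounds of $x,y,z$, and each dominates the other via increasingness; antisymmetry then closes the argument. Concretely, I would aim to show $(x\wedge y)\wedge z\trianglelefteq x\wedge(y\wedge z)$ through the comparison
$$\bigl(x\wedge y,\ z\bigr)\trianglelefteq_2\bigl(x,\ y\wedge z\bigr),$$
which needs $z\trianglelefteq y\wedge z$. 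That is false in general, so this comparison must be replaced by a suitably idempotent-rewritten pair. Working out the right pair is the step I expect to require the most care.
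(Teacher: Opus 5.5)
Your Steps 1 and 2 are correct and match the paper's argument that $O=\wedge$. The gap is in Step 3. As written, the proposal never proves that $\trianglelefteq$ is transitive, nor that $\wedge$ is associative. Each comparison you try is vacuous, circular, or (as you correctly observe for $(x\wedge y,z)\trianglelefteq_2(x,y\wedge z)$) unavailable, and you end by deferring the choice of the ``right pair.'' So the conclusion that $\mathbb{T}$ is a lattice is not yet established.

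The missing idea is to keep the first coordinate fixed at $x$ and move only the second. From $x\trianglelefteq x$ and $y\trianglelefteq z$ you get $(x,y)\trianglelefteq_2(x,z)$. Increasingness of $\wedge=O$, together with your correct observation that $x\trianglelefteq y$ forces $x\wedge y=x$, gives
\[
x=x\wedge y\trianglelefteq x\wedge z .
\]
Since $x\wedge z\trianglelefteq x$ by the definition of meet, antisymmetry yields $x\wedge z=x$. Hence $x=x\wedge z\trianglelefteq z$, so $\trianglelefteq$ is transitive.

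This uses one application of (QO4) and antisymmetry, with no hidden transitivity, and it is exactly how the paper concludes. Transitivity already gives the lattice conclusion, by (i)$\Rightarrow$(iv) of Theorem~\ref{thm:skala-equivalence} or directly from the definition. The detour through associativity is therefore unnecessary.
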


\begin{proof}
We first show that $O$ coincides with the meet operation, without assuming associativity. Let $x,y\in X$. Since $1$ is the neutral element of $O$,  the increasingness gives
$
O(x,y)\trianglelefteq O(x,1)=x,
O(x,y)\trianglelefteq O(1,y)=y.
$
Thus $O(x,y)$ is a common lower bound of $x$ and $y$. Conversely, if $u\in X$ satisfies $u\trianglelefteq x$ and $u\trianglelefteq y$, then idempotency and increasingness with respect to the product pseudo-order yield
$
u=O(u,u)\trianglelefteq O(x,y).
$
Therefore, $O(x,y)$ is the greatest lower bound of $x$ and $y$, and hence $O(x,y)=x\wedge y$.

It remains to prove that $\trianglelefteq$ is transitive. Let $x,y,z\in X$ satisfy $x\trianglelefteq y$ and $y\trianglelefteq z$. Since $x\trianglelefteq y$, we have $x\wedge y=x$. The increasingness of $O=\wedge$ implies
$
x=x\wedge y\trianglelefteq x\wedge z.
$
On the other hand, $x\wedge z\trianglelefteq x$ by the definition of meet. Antisymmetry therefore gives $x=x\wedge z$. Since $x\wedge z\trianglelefteq z$, it follows that $x\trianglelefteq z$.
Thus $\trianglelefteq$ is transitive, so $\mathbb{T}$ is a bounded lattice.
\end{proof}

\begin{definition}
\emph{Let $O$ be a quasi-overlap function on a bounded psoset $\mathbb{P}=(X,\trianglelefteq,0,1)$. $O$ is said to satisfy the \emph{cancellation law} if, for all $x,y,z\in X$,
$$
 O(x,y)=O(x,z)\Longrightarrow x=0\mbox{ or }y=z.
$$
 $O$ is said to be the \emph{strictly increasing} if, for all $x,y,z\in X$,
$$
 x\neq0, y\triangleleft z\Longrightarrow O(x,y)\triangleleft O(x,z).
$$}
\end{definition}

\begin{proposition}\label{prop:cancellation-injective}
Let $\mathbb{P}=(X,\trianglelefteq,0,1)$ be a bounded psoset and $O\in\mathbb{QO}(\mathbb{P})$. The following statements are equivalent:
\begin{enumerate}[label=(\roman*)]
\item $O$ satisfies the cancellation law;
\item For every $x\neq0$, the mapping $O(x,\cdot):X\to X$ is injective.
\end{enumerate}
Moreover, either condition implies that $O$ is strictly increasing.
\end{proposition}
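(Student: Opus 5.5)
The equivalence of (i) and (ii) is a direct unpacking of definitions, and I will do it first. Assume (i) and fix $x\neq0$. If $O(x,y)=O(x,z)$, the cancellation law gives $x=0$ or $y=z$. Since $x\neq0$, we get $y=z$, so $O(x,\cdot)$ is injective. Conversely, assume (ii) and suppose $O(x,y)=O(x,z)$. Either $x=0$, in which case the conclusion of the cancellation law already holds, or $x\neq0$, in which case injectivity of $O(x,\cdot)$ yields $y=z$. No order-theoretic input is needed here; in particular transitivity plays no role, so this step carries over verbatim to psosets.

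For the \lq\lq moreover\rq\rq\ part, I will show that (ii) implies strict increasingness. Let $x\neq0$ and $y\triangleleft z$, so that $y\trianglelefteq z$ and $y\neq z$. Since $x\trianglelefteq x$ by reflexivity, we have $(x,y)\trianglelefteq_2(x,z)$. Then (QO4) gives $O(x,y)\trianglelefteq O(x,z)$. Because $y\neq z$ and $O(x,\cdot)$ is injective, $O(x,y)\neq O(x,z)$, hence $O(x,y)\triangleleft O(x,z)$.

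I expect no real obstacle. The only point that needs care is to use (QO4) with respect to the product pseudo-order, reducing it to monotonicity in the second coordinate via reflexivity of $x\trianglelefteq x$. This avoids any appeal to transitivity, which is unavailable in a proper psoset. Commutativity (QO1) is not needed, although it shows that the same conclusions hold for $O(\cdot,x)$.
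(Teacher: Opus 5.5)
Your proposal is correct and follows essentially the same route as the paper: the equivalence of (i) and (ii) is read off from the definitions, and strict increasingness follows by combining (QO4) with cancellation/injectivity to rule out $O(x,y)=O(x,z)$ when $y\triangleleft z$. Your explicit use of $x\trianglelefteq x$ to obtain $(x,y)\trianglelefteq_2(x,z)$ is a careful touch that the paper leaves implicit in the phrase ``since $O$ is increasing.''
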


\begin{proof}
The equivalence between (i) and (ii) follows directly from the definition of the cancellation law. Assume that these conditions hold, and let $x\neq0$ and $y\triangleleft z$. Since $O$ is increasing,
$
 O(x,y)\trianglelefteq O(x,z).
$
If the equality holds, then the cancellation law implies $y=z$, contrary to $y\triangleleft z$. Hence
$
 O(x,y)\triangleleft O(x,z),
$
and $O$ is strictly increasing.
\end{proof}

\begin{proposition}\label{prop:finite-no-cancellation}
Let $\mathbb{P}=(X,\trianglelefteq,0,1)$ be a finite bounded psoset with $|X|>2$. Then no quasi-overlap function on $\mathbb{P}$ satisfies the cancellation law.
\end{proposition}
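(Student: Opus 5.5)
Suppose, for a contradiction, that some $O\in\mathbb{QO}(\mathbb{P})$ satisfies the cancellation law. The plan is a counting argument: fix a nonzero argument, use Proposition~\ref{prop:cancellation-injective} to get an injective map on the finite set $X$, conclude that it is a bijection, and then contradict (QO3) using an element strictly between $0$ and $1$.

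Since $|X|>2$, there is an element $a\in X\setminus\{0,1\}$. By Proposition~\ref{prop:cancellation-injective}, the mapping $g=O(a,\cdot):X\to X$ is injective, because $a\neq0$. As $X$ is finite, $g$ is then a bijection of $X$. In particular, $1$ lies in the image of $g$, so $O(a,y)=1$ for some $y\in X$. By (QO3) this forces $a=y=1$, which contradicts $a\neq1$. Therefore no such $O$ exists.

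An alternative route uses the map $O(1,\cdot)$ instead. By (QO3) this map attains the value $1$ only at $1$, and by (QO2) it attains $0$ only at $0$, so the surjectivity argument does not bite directly there; choosing $x=a$ avoids the issue. If one preferred, one could also argue by counting values: $O(a,y)\neq1$ for every $y$ by (QO3), so $O(a,\cdot)$ maps $X$ into $X\setminus\{1\}$. A set with $|X|$ elements cannot be mapped injectively into a set with $|X|-1$ elements, which gives the same contradiction.

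The key step is simply identifying that (QO3) excludes the value $1$ from the image of $O(a,\cdot)$ whenever $a\neq1$. The only place the hypotheses are really needed is the existence of $a\notin\{0,1\}$, which is exactly what $|X|>2$ supplies. Otherwise, with $X=\{0,1\}$, the function $O=\wedge$ does satisfy the cancellation law. I expect no genuine obstacle; the argument is purely combinatorial and uses neither increasingness nor any trellis structure.
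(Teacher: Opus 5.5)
Your proposal is correct and follows the paper's proof essentially verbatim: choose $x\in X\setminus\{0,1\}$, use Proposition~\ref{prop:cancellation-injective} to get $O(x,\cdot)$ injective, hence surjective on the finite set $X$, so some $O(x,y)=1$, contradicting (QO3). Your pigeonhole variant (injecting $X$ into $X\setminus\{1\}$) is the same argument phrased without surjectivity.
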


\begin{proof}
Suppose, to the contrary, that $O\in\mathbb{QO}(\mathbb{P})$ satisfies the cancellation law. Choose $x\in X\setminus\{0,1\}$. By Proposition~\ref{prop:cancellation-injective}, the mapping
$
 O(x,\cdot):X\to X
$
is injective. Since $X$ is finite, it is also surjective. Hence there exists $y\in X$ such that $O(x,y)=1.$ Condition (QO3) gives $x=y=1$, contradicting the choice of $x$. Therefore, no quasi-overlap function on $\mathbb{P}$ is cancellative.
\end{proof}

\begin{proposition}\label{prop:complete-strict-cancel}
Let $\mathbb{P}=(X,\trianglelefteq,0,1)$ be a bounded psoset and assume that every two distinct elements are comparable with respect to $\trianglelefteq$. Then a quasi-overlap function on the bounded psoset is cancellative if and only if it is strictly increasing.
\end{proposition}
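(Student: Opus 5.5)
One direction is already available: by Proposition~\ref{prop:cancellation-injective}, any quasi-overlap function satisfying the cancellation law is strictly increasing, and this holds on an arbitrary bounded psoset. So the only work is the converse, and this is where the comparability hypothesis enters.

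For the converse, I assume $O\in\mathbb{QO}(\mathbb{P})$ is strictly increasing. Take $x,y,z\in X$ with $O(x,y)=O(x,z)$, and suppose $x\neq0$ and $y\neq z$, aiming for a contradiction. Since any two distinct elements are comparable, either $y\triangleleft z$ or $z\triangleleft y$. In the first case strict increasingness gives $O(x,y)\triangleleft O(x,z)$. This means $O(x,y)\neq O(x,z)$, which contradicts the assumed equality. The second case is symmetric: it gives $O(x,z)\triangleleft O(x,y)$, again a contradiction. Hence $x=0$ or $y=z$, which is exactly the cancellation law.

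I expect no real obstacle here. The only care needed is that the definition of strict increasingness acts on the second argument with the first argument $x\neq0$ fixed, and the cancellation law has the same shape. Commutativity (QO1) is therefore not even needed. Transitivity is never used either, because only the single relation between $y$ and $z$ matters; this is why the result holds on psosets and not only on chains.
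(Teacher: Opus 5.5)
Your proof is correct and follows the paper's argument essentially verbatim: the forward direction is cited from Proposition~\ref{prop:cancellation-injective}, and the converse uses comparability of $y\neq z$ to get $y\triangleleft z$ or $z\triangleleft y$, and then strict increasingness contradicts $O(x,y)=O(x,z)$. Your remarks that neither commutativity nor transitivity is needed are accurate.
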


\begin{proof}
The necessity follows from Proposition~\ref{prop:cancellation-injective}.

Now, suppose that $O: X^2\to X$ is a strictly increasing quasi-overlap function and
$$
 O(x,y)=O(x,z)\mbox{ with }x\neq 0.
$$
 If $y\neq z$, then either $y\triangleleft z$ or $z\triangleleft y$. In the first case,
$$
 O(x,y)\triangleleft O(x,z),
$$
and in the second case,
$$
 O(x,z)\triangleleft O(x,y).
$$
Both cases contradict $O(x,y)=O(x,z)$. Hence $y=z$, and therefore $O$ satisfies the cancellation law.
\end{proof}

Applying Propositions \ref{prop:finite-no-cancellation} and \ref{prop:complete-strict-cancel}, we have the following corollary.
\begin{corollary}
No quasi-overlap function on $\mathbb{T}_5$ is strictly increasing.
\end{corollary}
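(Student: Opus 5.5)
The corollary states that no quasi-overlap function on $\mathbb{T}_5$ is strictly increasing. I would prove it by contradiction, combining the two propositions just cited with a direct check of the explicit classification in Example~\ref{examp:T5-classification}.

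The cited propositions do not combine directly. Proposition~\ref{prop:complete-strict-cancel} needs every two distinct elements to be comparable, and in $\mathbb{T}_5$ every pair in $C=\{a,b,c\}$ is comparable ($a\trianglelefteq b$, $b\trianglelefteq c$, $c\trianglelefteq a$) and $0,1$ are comparable with everything. So $\mathbb{T}_5$ satisfies that hypothesis even though it is a proper psoset. Then the chain of implications is as follows.
\begin{enumerate}[label=(\arabic*)]
\item Suppose $O\in\mathbb{QO}(\mathbb{T}_5)$ is strictly increasing.
\item By Proposition~\ref{prop:complete-strict-cancel}, $O$ satisfies the cancellation law.
\item Since $\mathbb{T}_5$ is finite with $|X|=5>2$, Proposition~\ref{prop:finite-no-cancellation} says this is impossible.
\end{enumerate}

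The one step needing care is the comparability check, which I will state explicitly by listing the pairs. Proposition~\ref{prop:complete-strict-cancel} uses only comparability, not transitivity, so it applies to $\mathbb{T}_5$.

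As an independent sanity check, and a fallback if any reader doubts the comparability route, I would verify the claim directly from the classification. Every $O=O_{\eta,\theta}$ satisfies $O(a,a)=O(a,b)=\eta$. Since $a\neq 0$ and $a\triangleleft b$, strict increasingness would require $\eta\triangleleft\eta$, which is a contradiction. This one-line argument settles the corollary outright, and I would include it as the decisive verification.
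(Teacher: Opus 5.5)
Your proposal is correct and follows the paper's route: the corollary comes from Propositions~\ref{prop:complete-strict-cancel} and~\ref{prop:finite-no-cancellation}, which requires that any two distinct elements of $\mathbb{T}_5$ be comparable, and you rightly check this explicitly. Your direct check, $O_{\eta,\theta}(a,a)=O_{\eta,\theta}(a,b)=\eta$ with $a\neq 0$ and $a\triangleleft b$, is also valid and gives a self-contained confirmation from the classification in Example~\ref{examp:T5-classification}.
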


The following example shows that the condition ``every two distinct elements are comparable with respect to $\trianglelefteq$" in Proposition \ref{prop:complete-strict-cancel} cannot be deleted, in general.
\begin{example}\label{ex:strict-not-cancel}
\upshape
Let $L=[0,1]^2$ be equipped with the coordinatewise order, and define $h: [0,1]^2\to [0,1]$ by $h(u_1,u_2)=\frac{u_1+u_2}{2}$ and $d:[0,1]\to [0,1]^2$ by $d(t)=(t,t)$. Set $O: L^2\to L$ by $O(u,v)=d(h(u)h(v))$. Then $O$ is a quasi-overlap function on $L$. Indeed, $h(u)=0$ if and only if $u=(0,0)$, $h(u)=1$ if and only if $u=(1,1)$, and $h$ is increasing.

If $u\neq(0,0)$ and $v<w$ in the coordinatewise order, then
$
 h(u)>0
 \mbox{ and }
 h(v)<h(w),
$
so
$
 O(u,v)<O(u,w).
$
Thus $O$ is strictly increasing. However, for
$
 e=(1/4,3/4),
 f=(3/4,1/4),
$
we have $e\parallel f$ and $h(e)=h(f)=1/2$. Consequently,
$
 O((1,1),e)=O((1,1),f),
$
so the cancellation law fails.
\end{example}

Therefore, on a bounded trellis, a strictly increasing quasi-overlap function does not imply the cancellation law, generally.

\subsection{Archimedean and limiting properties}

Motivated by the lattice-valued setting in \cite{Paiva2021}, we formulate Archimedean and limiting properties of quasi-overlap functions directly on a bounded psoset.
\begin{definition}\label{def:archimedean}
\emph{A quasi-overlap function $O$ on a bounded psoset $\mathbb{P}=(X,\trianglelefteq,0,1)$ is called \emph{Archimedean} if, for any $x,y\in X\setminus\{0,1\}$, there exists an $n\ge1$ such that
$x_O^{(n)}\triangleleft y.$}
\end{definition}

\begin{definition}\label{def:order-limiting}
\emph{A quasi-overlap function $O$ on a bounded psoset $\mathbb{P}=(X,\trianglelefteq,0,1)$ is said to have the \emph{limiting property} if, for any $x\in X\setminus\{0,1\}$ and $y\in X\setminus\{0\}$, there exists an $N\ge2$ such that
$x_O^{(n)}\triangleleft y
\mbox{ for all }n\ge N.$}
\end{definition}

Then we have the following three propositions which provide the relationship among the Archimedean, the limiting property and the idempotency.
\begin{proposition}\label{prop:limiting-implies-arch}
Every quasi-overlap function satisfying the limiting property on a bounded psoset $\mathbb{P}=(X,\trianglelefteq,0,1)$ is Archimedean.
\end{proposition}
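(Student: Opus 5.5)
The argument is a direct unpacking of Definitions~\ref{def:archimedean} and~\ref{def:order-limiting}; nothing beyond them is needed. Let $O\in\mathbb{QO}(\mathbb{P})$ have the limiting property, and fix arbitrary $x,y\in X\setminus\{0,1\}$. We must produce one $n\ge1$ with $x_O^{(n)}\triangleleft y$.

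The key observation is that the hypotheses of the limiting property are weaker on $y$ than those of the Archimedean property. The limiting property only requires $x\in X\setminus\{0,1\}$ and $y\in X\setminus\{0\}$. Since $X\setminus\{0,1\}\subseteq X\setminus\{0\}$, our pair $(x,y)$ is admissible. Hence there is an $N\ge2$ such that $x_O^{(n)}\triangleleft y$ for every $n\ge N$.

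Taking $n=N$ gives $x_O^{(N)}\triangleleft y$. Since $N\ge2\ge1$, the Archimedean condition holds for this pair. As $x,y$ were arbitrary, $O$ is Archimedean.

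There is no genuine obstacle: the argument uses neither transitivity nor any algebraic property of $O$ beyond the definitions. The one point to check is that the powers $x_O^{(n)}$ mean the same thing in both definitions, presumably $x_O^{(1)}=x$ and $x_O^{(n)}=O(x_O^{(n-1)},x)$. Since both definitions use the same symbol, this holds automatically. The only care needed is over index ranges ($N\ge2$ versus $n\ge1$), and these are compatible.
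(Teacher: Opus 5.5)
Your proof is correct and follows essentially the same route as the paper: since $y\neq0$, the limiting property applies to the pair $(x,y)$, and taking $n=N$ gives the Archimedean condition. The paper additionally observes that $0\triangleleft y$, which plays no real role in the argument.
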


\begin{proof}
Let $x,y\in X\setminus\{0,1\}$. Since $0$ is the smallest element and $y\neq0$, we have $0\triangleleft y$. By the limiting property, there exists an $N\ge2$ such that
$
x_O^{(n)}\triangleleft y
\mbox{ for all }n\ge N.
$
In particular, $x_O^{(N)}\triangleleft y.$ Hence $O$ is Archimedean.
\end{proof}
\begin{proposition}\label{prop:arch-limiting}
Let $O$ be an Archimedean quasi-overlap function on a bounded psoset
$\mathbb{P}=(X,\trianglelefteq,0,1)$. If $\mathbb{P}=(X,\trianglelefteq,0,1)$ is a totally ordered set, then $O$ has the limiting property.
\end{proposition}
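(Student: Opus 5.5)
Throughout I read $x_O^{(n)}$ as the iterates $x_O^{(1)}=x$ and $x_O^{(n)}=O\bigl(x_O^{(n-1)},x\bigr)$ for $n\ge2$. The plan is to show that, on a totally ordered set, the sequence $\bigl(x_O^{(n)}\bigr)_n$ is monotone. The Archimedean property then forces it to be non-increasing. Once it is non-increasing, a single index $n$ with $x_O^{(n)}\triangleleft y$ propagates to all larger indices, because $\trianglelefteq$ is now transitive. The case $y=1$, which the Archimedean property does not cover, will be handled separately using (QO3).

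\textbf{Step 1 (monotonicity).} Fix $x\in X\setminus\{0,1\}$. Since $\mathbb{P}$ is totally ordered, either $x_O^{(2)}\trianglelefteq x_O^{(1)}$ or $x_O^{(1)}\trianglelefteq x_O^{(2)}$.

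In the first case I argue by induction. Suppose $x_O^{(n)}\trianglelefteq x_O^{(n-1)}$. Then $\bigl(x_O^{(n)},x\bigr)\trianglelefteq_2\bigl(x_O^{(n-1)},x\bigr)$, so (QO4) gives $x_O^{(n+1)}\trianglelefteq x_O^{(n)}$. Hence the sequence is non-increasing. The same argument shows that in the second case the sequence is non-decreasing.

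Transitivity of the chain then yields $x_O^{(m)}\trianglelefteq x_O^{(n)}$ for all $m\ge n$ in the first case, and the reverse inequality for all $m\ge n$ in the second.

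\textbf{Step 2 (ruling out the increasing case).} Apply the Archimedean property with $y=x$, which is allowed since $x\notin\{0,1\}$. It gives some $n$ with $x_O^{(n)}\triangleleft x$. If the sequence were non-decreasing, we would have $x=x_O^{(1)}\trianglelefteq x_O^{(n)}$. Antisymmetry would then give $x_O^{(n)}=x$, contradicting the strict inequality. So the sequence is non-increasing.

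I expect this to be the main subtle point. A quasi-overlap function need not have $1$ as neutral element, so $O(x,x)\trianglelefteq x$ is not automatic. It must be extracted from the Archimedean hypothesis via the dichotomy in Step 1.

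\textbf{Step 3 (conclusion).} Let $y\in X\setminus\{0\}$.

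If $y\ne1$, the Archimedean property gives $n$ with $x_O^{(n)}\triangleleft y$. Put $N=\max(n,2)$. For every $m\ge N$, Steps 1–2 and transitivity give $x_O^{(m)}\trianglelefteq x_O^{(n)}\triangleleft y$, hence $x_O^{(m)}\trianglelefteq y$. Moreover $x_O^{(m)}\neq y$: if they were equal, antisymmetry applied to $y=x_O^{(m)}\trianglelefteq x_O^{(n)}$ together with $x_O^{(n)}\trianglelefteq y$ would give $x_O^{(n)}=y$, a contradiction. So $x_O^{(m)}\triangleleft y$.

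If $y=1$, observe that for every $m\ge2$ we have $x_O^{(m)}=O\bigl(x_O^{(m-1)},x\bigr)\neq1$ by (QO3), since $x\neq1$. Hence $x_O^{(m)}\triangleleft1$ for all $m\ge N=2$.

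Both cases together give the limiting property of Definition~\ref{def:order-limiting}.
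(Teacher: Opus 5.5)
Your proof is correct and takes essentially the same route as the paper: both use totality and the Archimedean property with $y=x$ to show that the iterates $x_O^{(n)}$ form a non-increasing sequence, then propagate a single strict inequality $x_O^{(n)}\triangleleft y$ to all larger indices by transitivity. The differences are cosmetic: you handle the case $y=1$ directly with (QO3) rather than via $x_O^{(n)}\trianglelefteq x\triangleleft 1$, and you verify the strictness of $x_O^{(m)}\triangleleft y$ explicitly, where the paper simply appeals to transitivity.
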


\begin{proof}
Fix $x\in X\setminus\{0,1\}$. We first show that
\begin{equation}\label{eq9}x_O^{(2)}\triangleleft x.
\end{equation}
In fact, by taking $y=x$ there exists an $m\ge1$ such that
$$
x_O^{(m)}\triangleleft x
$$
since $O$ is Archimedean. Clearly, $m\ge2$. Suppose that $x_O^{(2)}\ntriangleleft x$. Since $\trianglelefteq$ is a total order, we have
$$
x=x_O^{(1)}\trianglelefteq x_O^{(2)}.
$$
By the increasingness of $O$, it follows inductively that
$$
x_O^{(1)}\trianglelefteq x_O^{(2)}
\trianglelefteq x_O^{(3)}
\trianglelefteq\cdots.
$$
In particular,
$$
x\trianglelefteq x_O^{(m)},
$$
contrary to $x_O^{(m)}\triangleleft x$.

We next prove that
\begin{equation}\label{eq10}
x_O^{(n+1)}\trianglelefteq x_O^{(n)}\mbox{ when }n\ge1.
\end{equation}
Indeed, by Eq.\eqref{eq9}, $x_O^{(2)}\triangleleft x$. If
$x_O^{(n)}\trianglelefteq x_O^{(n-1)}$, then, by the increasingness of $O$,
$$
x_O^{(n+1)}
=O(x_O^{(n)},x)
\trianglelefteq
O(x_O^{(n-1)},x)
=x_O^{(n)}.
$$
Thus Eq.\eqref{eq10} holds by induction.

Now, let $y\in X\setminus \{0\}$. We distinguish two cases as follows.

Case 1. if $y\neq1$, then
$y\in X\setminus\{0,1\}$. By the Archimedean property, there exists
$k\ge1$ such that
$$
x_O^{(k)}\triangleleft y.
$$
Put $N=\max\{2,k\}$. Then by Eq.\eqref{eq10}, for every
$n\ge N$,
$$
x_O^{(n)}
\trianglelefteq x_O^{(k)}
\triangleleft y,
$$
and hence, by transitivity,
$$
x_O^{(n)}\triangleleft y.
$$

Case 2. if $y=1$, then by Eqs. \eqref{eq9} and \eqref{eq10}, for any $n\ge1$ we have
$$
x_O^{(n)}\triangleleft1=y.
$$

Cases 1 and 2 imply that $O$ has the
limiting property.
\end{proof}

The following example shows that the condition that $\mathbb{P}=(X,\trianglelefteq,0,1)$ is a totally ordered set in Proposition~\ref{prop:arch-limiting} cannot be dropped, in general.
\begin{example}\label{ex:arch-not-limiting}
\upshape
Let $X=[0,1]\cup\{a\}$ with $a\notin[0,1]$, and put
$$
A=(0,1/2]\setminus\{2^{-2^m}:m\ge1\}.
$$
Besides reflexivity and the usual order on $[0,1]$, define
$
0\triangleleft a\triangleleft1
$
and, for $x\in(0,1)$,
$
x\triangleleft a\Longleftrightarrow x\in A,
$
while $a\parallel x$ for every $x\in(0,1)\setminus A$. Then one can easily check that
$\mathbb{P}=(X,\trianglelefteq,0,1)$ is a bounded psoset. Notice that
$
2^{-4}\triangleleft2^{-3}\triangleleft a
\mbox{ but }
2^{-4}\parallel a.
$
Thus $\mathbb{P}$ is not a totally ordered set.

Define $O:X^2\to X$ by
$
O(x,y)=r(x)r(y),
$
where $r:X\to[0,1]$ is given by
$
r(x)=x\mbox{ if }x\in[0,1],
r(a)=1/2.
$
Since $x\triangleleft a$ implies $x\le1/2=r(a)$, the mapping $r$ is increasing. Hence one can easily show that
$O\in\mathbb{QO}(\mathbb{P})$.

We now show that $O$ is Archimedean. Let
$x\in X\setminus\{0,1\}$ and put
$
q=r(x)\in(0,1).
$
A straightforward induction gives
$
x_O^{(n)}=q^n\mbox{ when }n\ge2.
$
Let $y\in X\setminus\{0,1\}$. If $y\in(0,1)$, then there exists
$n\ge2$ such that
$
q^n<y,
$
and hence
$
x_O^{(n)}\triangleleft y.
$
Now, suppose that $y=a$. Choose $N\ge2$ such that
$
q^N\le1/2.
$
We claim that one of $q^N$ and $q^{N+1}$ belongs to $A$. Indeed, write
$q=2^{-s}$ for some $s>0$. If
$
q^N=2^{-2^i}
\mbox{ and }
q^{N+1}=2^{-2^j}
$
for some $i,j\ge1$, then
$
Ns=2^i, (N+1)s=2^j.
$
Since $q<1$, we have $j>i$, and hence
$$
\frac{N+1}{N}=2^{j-i}\ge2,
$$
which is impossible for $N\ge2$. Thus either
$q^N\in A$ or $q^{N+1}\in A$, and consequently
$
x_O^{(n)}\triangleleft a
$
for some $n\ge2$. Therefore, $O$ is Archimedean.

However, $O$ does not satisfy the limiting property. Indeed, take
$x=y=a$. For every $m\ge1$,
$$
a_O^{(2^m)}=2^{-2^m}\notin A,
$$
and hence
$
a_O^{(2^m)}\parallel a.
$
Since $\lim_{m\to\infty}2^m=\infty$, there is no $N\ge2$ such that
$
a_O^{(n)}\triangleleft a
$
for all $n\ge N$. Thus $O$ is Archimedean but does not have the limiting property.
\end{example}

\begin{proposition}\label{prop:arch-idempotent}
Let $O$ be an Archimedean quasi-overlap function on a bounded psoset $\mathbb{P}=(X,\trianglelefteq,0,1)$, then $O$ has only trivial idempotent elements.
\end{proposition}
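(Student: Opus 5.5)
We argue by contradiction, using the powers $x_O^{(n)}$ that appear in Definition~\ref{def:archimedean}. The paper has not written out their definition explicitly. By analogy with $\lambda_F^{(n)}$ and with the computation in the proof of Proposition~\ref{prop:arch-limiting}, we read them as $x_O^{(1)}=x$ and $x_O^{(n)}=O(x_O^{(n-1)},x)$ for $n\ge2$.

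Suppose $O$ is Archimedean and $x\in X\setminus\{0,1\}$ is a non-trivial idempotent element, so $O(x,x)=x$.

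The first step is to show by induction that $x_O^{(n)}=x$ for every $n\ge1$. The case $n=1$ holds by definition. If $x_O^{(n-1)}=x$, then $x_O^{(n)}=O(x_O^{(n-1)},x)=O(x,x)=x$ by idempotency. Only the recursive definition is used here; increasingness and transitivity are not needed, so the argument works on an arbitrary bounded psoset.

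Next we apply Definition~\ref{def:archimedean} with $y=x$, which is legitimate because $x\in X\setminus\{0,1\}$. This gives some $n\ge1$ with $x_O^{(n)}\triangleleft x$. By the first step this says $x\triangleleft x$, i.e.\ $x\trianglelefteq x$ and $x\neq x$, which is absurd. Hence no element of $X\setminus\{0,1\}$ is idempotent.

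Finally, $0$ and $1$ are idempotent elements of every quasi-overlap function by (QO2) and (QO3). So the idempotent elements of $O$ are exactly the trivial ones, as claimed.

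There is no real obstacle here. The only point needing care is that the recursion for $x_O^{(n)}$ must be the one described above, so that idempotency propagates through every power. If the paper instead uses a different bracketing, such as $O(x,x_O^{(n-1)})$, the same induction goes through by commutativity (QO1).
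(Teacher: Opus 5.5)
Your proposal is correct and follows essentially the same route as the paper. The paper also assumes a non-trivial idempotent $a$, shows by induction that $a_O^{(n)}=a$ for all $n\ge1$, and applies the Archimedean property with $x=y=a$ to reach the absurdity $a\triangleleft a$. Your reading of the recursion $x_O^{(n)}=O(x_O^{(n-1)},x)$ matches how the paper uses it in the proof of Proposition~\ref{prop:arch-limiting}.
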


\begin{proof}
Suppose that $a\in X\setminus\{0,1\}$ is an idempotent element of $O$, i.e., $O(a,a)=a.$ By induction, $a_O^{(n)}=a$ when $n\ge1$.
Applying the Archimedean property with $x=y=a$, there exists an $n\ge1$ such that
$
 a_O^{(n)}\triangleleft a.
$
Hence $a\triangleleft a$, which is impossible. Therefore, $O$ has no non-trivial idempotent elements.
\end{proof}

\begin{example}\label{cor:T5-not-arch}
\emph{(i) The function $O:[0,1]^2\to [0,1]$ with $O(x,y)=\min\{x,y\}$ is not Archimedean.}

\emph{(ii) No quasi-overlap function on $\mathbb{T}_5$ is Archimedean. Consequently, no quasi-overlap function on $\mathbb{T}_5$ has the limiting property. Indeed, let $O=O_{\eta,\theta}\in\mathbb{QO}(\mathbb{T}_5)$. By Example~\ref{examp:T5-classification},
$
 O(\eta,\eta)=\eta,
$
where $\eta\in C\subseteq X\setminus\{0,1\}$. Thus $O$ has a non-trivial idempotent element. By Proposition~\ref{prop:arch-idempotent}, $O$ is not Archimedean. Therefore, $O$ does not satisfy the limiting property from Proposition~\ref{prop:limiting-implies-arch}.}
\end{example}

Furthermore, we have the following result which gives a finite obstruction to the limiting property.
\begin{proposition}\label{prop:finite-no-limiting}
Let $\mathbb{P}=(X,\trianglelefteq,0,1)$ be a finite bounded psoset with $|X|>2$. Then no quasi-overlap function on $\mathbb{P}$ has the limiting property.
\end{proposition}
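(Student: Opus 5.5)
The plan is to argue by contradiction, using the orbit of a single element under iterated self-aggregation and the finiteness of $X$ (a pigeonhole argument). I read $x_O^{(n)}$ as the iterates used in Definitions~\ref{def:archimedean} and~\ref{def:order-limiting}: $x_O^{(1)}=x$ and $x_O^{(n)}=O(x_O^{(n-1)},x)$ for $n\ge2$, as in the proof of Proposition~\ref{prop:arch-limiting}.

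Suppose $O\in\mathbb{QO}(\mathbb{P})$ has the limiting property. Since $|X|>2$, we can fix some $x\in X\setminus\{0,1\}$.

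First, I show by induction on $n$ that $x_O^{(n)}\in X\setminus\{0,1\}$ for every $n\ge1$. The case $n=1$ is the choice of $x$. For the inductive step, assume $x_O^{(n-1)}\neq0$. Since also $x\neq0$, (QO2) gives $x_O^{(n)}=O(x_O^{(n-1)},x)\neq0$. Since $x\neq1$, (QO3) gives $x_O^{(n)}\neq1$.

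Second, I apply the pigeonhole principle. The sequence $(x_O^{(n)})_{n\ge1}$ takes values in the finite set $X\setminus\{0,1\}$. Hence some element $z\in X\setminus\{0,1\}$ satisfies $x_O^{(n)}=z$ for infinitely many $n$.

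Third, I use $z$ as the target element. Because $z\neq0$, the limiting property applied to $x$ and $y=z$ gives an $N\ge2$ with $x_O^{(n)}\triangleleft z$ for all $n\ge N$. Choose $n\ge N$ with $x_O^{(n)}=z$. This yields $z\triangleleft z$, which contradicts the definition of $\triangleleft$ (it requires $z\neq z$). Therefore no quasi-overlap function on $\mathbb{P}$ has the limiting property.

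The only delicate point is picking the right $y$. A fixed $y$ such as a minimal nonzero element need not exist in a useful form, because $\trianglelefteq$ is not transitive. Taking $y$ to be a value that the orbit revisits infinitely often avoids this issue, and it needs neither transitivity nor the increasingness of $O$.
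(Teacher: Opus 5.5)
Your proof is correct and is essentially the paper's argument. Both fix $x\in X\setminus\{0,1\}$, use (QO2) to see that the iterates $x_O^{(n)}$ never equal $0$, and then take an orbit value $z$ itself as the target $y$ to force $z\triangleleft z$. The only difference is how finiteness is used: the paper takes a uniform threshold $N=\max\{N_y: y\in X\setminus\{0\}\}$ and sets $z=x_O^{(N)}$, whereas you use pigeonhole to find a value recurring infinitely often and so need only the single threshold for $y=z$. Your extra check that $x_O^{(n)}\neq 1$ is harmless but not needed.
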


\begin{proof}
Suppose, to the contrary, that $O\in\mathbb{QO}(\mathbb{P})$ has the limiting property. Choose $x\in X\setminus\{0,1\}$. Since $x\neq0$, the condition~(QO2), by induction, implies that
\begin{equation}\label{eq6}
x_O^{(n)}\neq0\mbox{ for every }n\ge1.
\end{equation}
Then for each $y\in X\setminus\{0\}$, the limiting property provides an integer $N_y\ge2$ such that
\begin{equation}\label{eq7}
x_O^{(n)}\triangleleft y
\mbox{ for all }n\ge N_y.
\end{equation}
Since $X\setminus\{0\}$ is finite and nonempty, the integer
$$
N=\max\{N_y:y\in X\setminus\{0\}\}
$$
exists. Set $z=x_O^{(N)}$. Then by Eq.\eqref{eq6}, $z\neq0$, so $N_z\le N$. In Eq.\eqref{eq7}, substituting $z$ for $y$ and $N$ for $n$, respectively, yields
$$
z=x_O^{(N)}\triangleleft z,
$$
a contradiction. Therefore, no quasi-overlap function on $\mathbb{P}$ has the limiting property.
\end{proof}

We finally give a quasi-overlap function with the limiting property on an infinite bounded proper trellis.

\begin{example}\label{ex:infinite-archimedean}
\upshape
Let $X=[0,1]\cup\{a,b,c\}$ with $a,b,c\notin[0,1]$. Define $\trianglelefteq$ on $X$ by distinguishing three cases as follows.

(i) For every $x,y\in[0,1]$, $x \trianglelefteq y$ if and only if $x\leq y$.

(ii) For every $\eta\in\{a,b,c\}$, define
$x\triangleleft \eta$ if $x\in[0,1/2]$, and $\eta\triangleleft x$ if $x\in(1/2,1]$.

(iii) On $\{a,b,c\}$, $a\trianglelefteq b\trianglelefteq c\trianglelefteq a$.\\
Then one can easily check that $(X, \trianglelefteq, 0,1)$ is a bounded psoset. Meanwhile, every pair of $X$ has a meet and a join. Indeed, pairs in $[0,1]$ use the usual minimum and maximum. For any $x\in[0,1]$ and $\eta\in\{a,b,c\}$, the elements $x$ and $\eta$ are comparable, and hence their meet and join are determined by their pseudo-order. The pairs inside $\{a,b,c\}$ have the same meet and join pattern as the cycle in $\mathbb{T}_5$. Therefore, $\mathbb{T}_{\mathrm{inf}}=(X, \trianglelefteq, \wedge, \vee, 0,1)$ is an infinite bounded proper trellis.

Define $O:X^2\to X$ by
$O(x,y)=r(x)r(y)$, where $r:X\to[0,1]$ is given by
$r(x)=x$ if $x\in[0,1]$,
$r(a)=r(b)=r(c)=1/2$.
Then one can easily show that $O$ is a quasi-overlap function on $\mathbb{T}_{\mathrm{inf}}$.

We now verify that $O$ has the limiting property. Let $x\in X\setminus\{0,1\}$ and put
$
q=r(x)\in(0,1).
$
We first prove that
\begin{equation}\label{eq8}x_O^{(n)}=q^n\mbox{ when }n\ge2.\end{equation} Indeed,
$
x_O^{(2)}=O(x,x)=r(x)^2=q^2.
$
Suppose that $x_O^{(n)}=q^n$ for some $n\ge2$. Since $q^n\in(0,1)$, we have $r(q^n)=q^n$, and hence
$
x_O^{(n+1)}
=O(x_O^{(n)},x)
=r(q^n)r(x)
=q^{n+1}.
$
Thus Eq.\eqref{eq8} follows by induction.

Now, let $y\in X\setminus\{0\}$. The rest proof are divided into two cases as follows.

Case 1. If $y\in(0,1]$, then choose $N\ge2$ such that
$
q^N<y.
$
Thus by Eq.\eqref{eq8}, for every $n\ge N$,
$
x_O^{(n)}=q^n\le q^N<y.
$
Since $x_O^{(n)},y\in(0,1]$, it follows that
$
x_O^{(n)}\triangleleft y.
$

 Case 2. If $y\in\{a,b,c\}$, then choose $N\ge2$ such that
$
q^N<1/2.
$
Then by Eq.\eqref{eq8}, for every $n\ge N$,
$
x_O^{(n)}=q^n<1/2.
$
Thus by the definition of the pseudo-order $\trianglelefteq$,
$
x_O^{(n)}\triangleleft y.
$

Cases 1 and 2 imply that $O$ satisfies the limiting property. In particular, $O$ is Archimedean by Proposition~\ref{prop:limiting-implies-arch}.
\end{example}

\section{Construction methods of quasi-overlap functions}\label{sec:constructions}

In this section, we establish two constructing methods of quasi-overlap functions, which are based on boundary-faithful mappings and adjunctions on bounded psosets and the transitive-endpoints of a bounded trellis, respectively.
\subsection{Based on boundary-faithful mappings and adjunctions}

We generalize the $0,1$-homomorphism in \cite{Qiao2022} as follows.
\begin{definition}\label{def:boundary-map}
\emph{Let $\mathbb{P}=(X,\trianglelefteq,0_X,1_X)$ and $\mathbb{Q}=(Y,\trianglelefteq,0_Y,1_Y)$ be bounded psosets. A mapping $f:X\to Y$ is called a \emph{boundary-faithful increasing mapping} if, for all $x,y\in X$,
\begin{enumerate}[label=(\roman*)]
\item $x\trianglelefteq y$ implies $f(x)\trianglelefteq f(y)$;
\item $f(x)=0_Y$ if and only if $x=0_X$;
\item $f(x)=1_Y$ if and only if $x=1_X$.
\end{enumerate}}
\end{definition}

Notice that because the proof of Lemma 3.1 in \cite{Qiao2022} does not need the transitivity, we have the following theorem.
\begin{theorem}\label{thm:boundary-construct}
Let $\mathbb{P}=(X,\trianglelefteq,0_X,1_X)$ and $\mathbb{Q}=(Y,\trianglelefteq,0_Y,1_Y)$ be bounded psosets. Let $d:X\to Y$ and $g:Y\to X$ be boundary-faithful increasing mappings, and $O_Y$ be a quasi-overlap function on $\mathbb{Q}$. Then the binary operation $O_{d,g}:X^2\to X$ defined by
$$
 O_{d,g}(x,y)=g\bigl(O_Y(d(x),d(y))\bigr)
$$
is a quasi-overlap function on $\mathbb{P}$.
\end{theorem}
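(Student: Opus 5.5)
The plan is to verify the four conditions (QO1)--(QO4) of Definition~\ref{def:qo} directly for $O_{d,g}$. Transitivity is never used: each step either applies a single increasing map to a single $\trianglelefteq$-relation, or uses an ``if and only if'' boundary condition.

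\emph{Commutativity and increasingness.} (QO1) is immediate, since $O_Y(d(x),d(y))=O_Y(d(y),d(x))$. For (QO4), I would assume $(x,y)\trianglelefteq_2(x',y')$ in $X^2$. Condition (i) of Definition~\ref{def:boundary-map} for $d$, applied to each coordinate separately, gives $(d(x),d(y))\trianglelefteq_2(d(x'),d(y'))$ in $Y^2$; this is a single step per coordinate, with no chaining. Increasingness of $O_Y$ with respect to $\trianglelefteq_2$ then yields $O_Y(d(x),d(y))\trianglelefteq O_Y(d(x'),d(y'))$. Finally, applying (i) for $g$ once gives $O_{d,g}(x,y)\trianglelefteq O_{d,g}(x',y')$. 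The only point needing care is to invoke increasingness of $O_Y$ with respect to the product pseudo-order, not coordinatewise; this is exactly what (QO4) on $\mathbb{Q}$ supplies.

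\emph{Boundary conditions.} For (QO2) I would write the chain of equivalences
$$
O_{d,g}(x,y)=0_X\Longleftrightarrow O_Y(d(x),d(y))=0_Y\Longleftrightarrow d(x)=0_Y\mbox{ or }d(y)=0_Y\Longleftrightarrow x=0_X\mbox{ or }y=0_X,
$$
using condition (ii) for $g$, then (QO2) for $O_Y$, then (ii) for $d$. (QO3) follows by the same chain with $1$ in place of $0$ and ``and'' in place of ``or'', using condition (iii) for $g$ and $d$ together with (QO3) for $O_Y$.

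No step is a real obstacle. The main point to stress is that the lattice-based proof of Lemma~3.1 in \cite{Qiao2022} already uses only one-step preservation of $\trianglelefteq$ together with the two-sided boundary conditions, so it carries over verbatim to bounded psosets.
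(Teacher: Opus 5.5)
Your direct verification of (QO1)--(QO4) is correct and is essentially the argument the paper has in mind when it states the theorem without proof, deferring to Lemma~3.1 of \cite{Qiao2022} (which concerns bounded posets, not lattices). Your point that $O_Y$ must be used as increasing with respect to $\trianglelefteq_2$, rather than coordinatewise, is the one place where dropping transitivity matters. It is exactly why the argument goes through on bounded psosets.
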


We now connect Theorem~\ref{thm:boundary-construct} with the adjunction on psosets introduced in \cite{Geng2026}.
\begin{definition}[\cite{Geng2026}]\label{def:psoset-Adjunction}
\emph{Let $(X,\trianglelefteq)$ and $(Y,\trianglelefteq)$ be psosets, and let $d:X\to Y$ and $g:Y\to X$ be mappings. The pair $(g,d)$ is called an \emph{adjunction} between $X$ and $Y$ if both $d$ and $g$ are increasing and, for any $x\in X$, $y\in Y$,
$$
 y\trianglelefteq d(x)
 \Longleftrightarrow
 g(y)\trianglelefteq x.
$$
When the psosets $X$ and $Y$ are same, the pair $(g,d)$ is called an \emph{adjunction} on $X$. }
\end{definition}

The following proposition provides sufficient conditions ensuring that both mappings in an adjunction are boundary-faithful increasing.
\begin{proposition}\label{prop:Adjunction-boundary}
Let $\mathbb{P}=(X,\trianglelefteq,0_X,1_X)$ and $\mathbb{Q}=(Y,\trianglelefteq,0_Y,1_Y)$ be bounded psosets, and $(g,d)$ be an adjunction between $X$ and $Y$, where $d:X\to Y$ and $g:Y\to X$. Assume that
\begin{enumerate}[label=(\roman*)]
\item $d$ is surjective;
\item $d(x)\neq0_Y$ for any $x\neq0_X$;
\item $d(x)\neq1_Y$ for any $x\neq1_X$.
\end{enumerate}
Then
$d\circ g=\operatorname{id}_Y$,
$g$ is injective, and both $d$ and $g$ are boundary-faithful increasing mappings.
\end{proposition}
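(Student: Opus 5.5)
I would prove the three claims in the order $d\circ g=\operatorname{id}_Y$, then injectivity of $g$, then boundary-faithfulness of $d$ and $g$, using only the adjunction equivalence $y\trianglelefteq d(x)\Leftrightarrow g(y)\trianglelefteq x$, antisymmetry, and the bounds. Transitivity must never be used, so each inequality has to come from a single application of the adjunction or from the bounds.

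\emph{Step 1: $d\circ g=\operatorname{id}_Y$.} Fix $y\in Y$. Taking $x=g(y)$ in the adjunction, reflexivity $g(y)\trianglelefteq g(y)$ gives $y\trianglelefteq d(g(y))$. For the reverse inequality I would use surjectivity: choose $x_0$ with $d(x_0)=y$. Then $y\trianglelefteq d(x_0)$ gives $g(y)\trianglelefteq x_0$, and monotonicity of $d$ gives $d(g(y))\trianglelefteq d(x_0)=y$. Antisymmetry then yields $d(g(y))=y$. This is the only slightly delicate point, since in a lattice one would argue via $d\circ g\circ d=d$ and transitivity. The argument above avoids chains, because it needs only one application of increasingness followed by the equality $d(x_0)=y$. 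Injectivity of $g$ follows at once: $g(y)=g(y')$ implies $y=d(g(y))=d(g(y'))=y'$.

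\emph{Step 2: $d$ is boundary-faithful.} $d$ is increasing by the definition of an adjunction. Conditions (ii) and (iii) of the hypothesis give the implications $d(x)=0_Y\Rightarrow x=0_X$ and $d(x)=1_Y\Rightarrow x=1_X$. For the converse directions I would use surjectivity. Pick $x$ with $d(x)=0_Y$; by (ii), $x=0_X$, so $d(0_X)=0_Y$. Similarly, some $x$ has $d(x)=1_Y$, and by (iii) $x=1_X$, so $d(1_X)=1_Y$.

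\emph{Step 3: $g$ is boundary-faithful.} $g$ is increasing by definition. If $g(y)=0_X$, then $y=d(g(y))=d(0_X)=0_Y$, and likewise $g(y)=1_X$ forces $y=1_Y$. Conversely, $0_Y\trianglelefteq d(0_X)$ holds trivially, so the adjunction gives $g(0_Y)\trianglelefteq 0_X$, hence $g(0_Y)=0_X$ by antisymmetry. For $g(1_Y)$, Step 1 gives $d(g(1_Y))=1_Y$. Since $d(x)=1_Y$ only when $x=1_X$ by Step 2, we get $g(1_Y)=1_X$.

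I expect no step to be a real obstacle. The only thing to watch is that every inequality is produced by one application of the adjunction or of increasingness, never by chaining two relations, since $\trianglelefteq$ need not be transitive.
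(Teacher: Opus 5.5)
Your proposal is correct and follows essentially the same route as the paper. You prove $d\circ g=\operatorname{id}_Y$ via $y\trianglelefteq d(g(y))$ plus surjectivity and a single use of monotonicity, get boundary-faithfulness of $d$ from surjectivity with (ii)--(iii), and get that of $g$ from $d\circ g=\operatorname{id}_Y$. The only deviation is minor: you obtain $g(0_Y)=0_X$ directly from the adjunction ($0_Y\trianglelefteq d(0_X)$ gives $g(0_Y)\trianglelefteq 0_X$), whereas the paper deduces it from $d(g(0_Y))=0_Y$ and the faithfulness of $d$; both arguments are valid.
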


\begin{proof}
We first prove that
$d\circ g=\operatorname{id}_Y.$ For any $y\in Y$, we have $g(y)\trianglelefteq g(y)$, and then
\begin{equation}\label{eq2}
 y\trianglelefteq d(g(y)).
\end{equation}
Since $d$ is surjective, there exists an $x\in X$ such that $d(x)=y$. From
$
 d(x)\trianglelefteq d(x)
$
we obtain
$$
 g(d(x))\trianglelefteq x.
$$
Applying the increasingness of $d$ gives
$$
 d(g(d(x)))\trianglelefteq d(x),
$$
i.e.,
$$
 d(g(y))\trianglelefteq y,
$$
which together with Eq.(\ref{eq2}) means that
$$
 d(g(y))=y.
$$
Thus $d\circ g=\operatorname{id}_Y$. Consequently, $g$ is injective.

We next verify the boundary conditions for $d$. Since $d$ is surjective,
there are $x_0,x_1\in X$ such that
$
 d(x_0)=0_Y
 \mbox{ and }
 d(x_1)=1_Y.
$
Assumptions (ii) and (iii) imply, respectively,
$
 x_0=0_X
 \mbox{ and }
 x_1=1_X.
$
Together with assumptions~(ii) and~(iii), we obtain
$$
 d(x)=0_Y\Longleftrightarrow x=0_X
$$
and
$$
 d(x)=1_Y\Longleftrightarrow x=1_X.
$$
Therefore, $d$ is a boundary-faithful increasing mapping.

It remains to verify the boundary conditions for $g$. Since
$d\circ g=\operatorname{id}_Y$,
$
 d(g(0_Y))=0_Y\mbox{ implies } g(0_Y)=0_X.
$
Similarly,
$
 d(g(1_Y))=1_Y\mbox{ implies } g(1_Y)=1_X.
$

Conversely, if $g(y)=0_X$, then
$
 y=d(g(y))=d(0_X)=0_Y.
$
Hence
$$
 g(y)=0_X\Longleftrightarrow y=0_Y.
$$
Likewise, we have
$$
 g(y)=1_X\Longleftrightarrow y=1_Y.
$$

Therefore, $g$ is also a boundary-faithful increasing mapping.
\end{proof}

Therefore, Theorem \ref{thm:boundary-construct} implies the following one.
\begin{theorem}\label{thm:Adjunction-qo}
Under the assumptions of Proposition~\ref{prop:Adjunction-boundary}, let $O_Y$ be a quasi-overlap function on $\mathbb{Q}$. Define $O_{d,g}:X^2\to X$ by
$$
 O_{d,g}(x,y)
 =
 g\bigl(O_Y(d(x),d(y))\bigr).
$$
Then $O_{d,g}$ is a quasi-overlap function on $\mathbb{P}$.
Moreover, the following diagrams commute:
\begin{center}
\begin{tikzpicture}[>=stealth,baseline=(current bounding box.center)]
\node (YY) at (0,1.6) {$Y\times Y$};
\node (Y)  at (3.2,1.6) {$Y$};
\node (XX) at (0,0) {$X\times X$};
\node (X)  at (3.2,0) {$X$};

\draw[->] (YY) -- node[above] {$O_Y$} (Y);
\draw[->] (YY) -- node[left] {$g\times g$} (XX);
\draw[->] (Y)  -- node[right] {$g$} (X);
\draw[->] (XX) -- node[below] {$O_{d,g}$} (X);
\end{tikzpicture}
\qquad\qquad
\begin{tikzpicture}[>=stealth,baseline=(current bounding box.center)]
\node (XX) at (0,1.6) {$X\times X$};
\node (X)  at (3.2,1.6) {$X$};
\node (YY) at (0,0) {$Y\times Y$};
\node (Y)  at (3.2,0) {$Y$};

\draw[->] (XX) -- node[above] {$O_{d,g}$} (X);
\draw[->] (XX) -- node[left] {$d\times d$} (YY);
\draw[->] (X)  -- node[right] {$d$} (Y);
\draw[->] (YY) -- node[below] {$O_Y$} (Y);
\end{tikzpicture}
\end{center}
Thus $O_{d,g}$ is an extension of $O_Y$ through the injective
mapping $g$.
\end{theorem}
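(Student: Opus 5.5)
The plan is to split the statement into the quasi-overlap part and the two commutativity claims, using only Proposition~\ref{prop:Adjunction-boundary} and Theorem~\ref{thm:boundary-construct}.

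\emph{Quasi-overlap part.} Under assumptions (i)--(iii) of Proposition~\ref{prop:Adjunction-boundary}, that proposition gives three facts:
\begin{itemize}
\item both $d:X\to Y$ and $g:Y\to X$ are boundary-faithful increasing mappings;
\item $g$ is injective;
\item $d\circ g=\operatorname{id}_Y$.
\end{itemize}
The first fact is exactly the hypothesis of Theorem~\ref{thm:boundary-construct}, applied with the given $O_Y\in\mathbb{QO}(\mathbb{Q})$. It yields $O_{d,g}\in\mathbb{QO}(\mathbb{P})$ at once. No transitivity is needed, since that theorem is valid on arbitrary bounded psosets.

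\emph{Commutativity.} Both diagrams reduce to the identity $d\circ g=\operatorname{id}_Y$.

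For the first diagram, take $u,v\in Y$ and compute
$$O_{d,g}(g(u),g(v))=g\bigl(O_Y(d(g(u)),d(g(v)))\bigr)=g\bigl(O_Y(u,v)\bigr).$$
This says $O_{d,g}\circ(g\times g)=g\circ O_Y$.

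For the second diagram, take $x,y\in X$ and apply $d$ to the definition:
$$d\bigl(O_{d,g}(x,y)\bigr)=(d\circ g)\bigl(O_Y(d(x),d(y))\bigr)=O_Y(d(x),d(y)).$$
This says $d\circ O_{d,g}=O_Y\circ(d\times d)$.

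\emph{Extension claim.} Since $g$ is injective, it identifies $Y$ with $g(Y)\subseteq X$. By the first diagram, $O_{d,g}$ restricted to $g(Y)\times g(Y)$ is precisely the transport of $O_Y$ along $g$. In particular, $g(Y)$ is closed under $O_{d,g}$, so $O_{d,g}$ extends $O_Y$.

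There is no real obstacle here. The only substantive ingredient is $d\circ g=\operatorname{id}_Y$, which Proposition~\ref{prop:Adjunction-boundary} already supplies. The one point needing care is to apply $d$ on the left in the second diagram and to substitute through $d\circ g$ on the inside in the first, rather than needing $g\circ d=\operatorname{id}_X$, which in general fails.
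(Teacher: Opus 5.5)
Your proof is correct and follows the paper's route. The paper obtains the theorem directly by combining Proposition~\ref{prop:Adjunction-boundary}, which makes $d$ and $g$ boundary-faithful increasing with $d\circ g=\operatorname{id}_Y$, with Theorem~\ref{thm:boundary-construct}; your explicit check of both diagrams via $d\circ g=\operatorname{id}_Y$ is also correct and fills in a step the paper leaves implicit.
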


\begin{remark}\label{rem:galois-s}
\emph{Theorem~\ref{thm:Adjunction-qo} extends Theorem 3.1 of \cite{Qiao2022}.}
\end{remark}

In particular, we have the following corollary which tells us that a quasi-overlap function on bounded psosets can be induced by a given quasi-overlap function through an adjunction.
\begin{corollary}\label{cor:poset-to-psoset-qo}
Let $\mathbb{P}=(X,\trianglelefteq,0_X,1_X)$ be a bounded psoset and $\mathbb{Q}=(Y,\leq,0_Y,1_Y)$ be a bounded poset. Let $(g,d)$ be an adjunction between $X$ and $Y$, where $d:X\to Y$ and $g:Y\to X$ satisfying the assumptions (i)--(iii) of Proposition~\ref{prop:Adjunction-boundary}. If $O_Y$ is a quasi-overlap function on $\mathbb{Q}$ in the sense of \cite{Qiao2022}, then
$$
 O_X(x,y)
 =
 g\bigl(O_Y(d(x),d(y))\bigr)
$$
is a quasi-overlap function on $\mathbb{P}$ in the sense of Definition~\ref{def:qo}.
\end{corollary}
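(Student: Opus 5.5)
The plan is to reduce the corollary to Theorem~\ref{thm:Adjunction-qo}. The only point to check is that the data in the corollary fit the hypotheses of Proposition~\ref{prop:Adjunction-boundary} and that $O_Y$ is a quasi-overlap function in the sense of Definition~\ref{def:qo}.

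First, I will view the bounded poset $\mathbb{Q}=(Y,\leq,0_Y,1_Y)$ as a bounded psoset. A partial order is reflexive and antisymmetric, so $\mathbb{Q}$ is a bounded psoset with $\trianglelefteq$ read as $\leq$. The adjunction $(g,d)$ between $X$ and $Y$ is then an adjunction in the sense of Definition~\ref{def:psoset-Adjunction}, and assumptions (i)--(iii) hold by hypothesis. So Proposition~\ref{prop:Adjunction-boundary} applies, and $d$ and $g$ are boundary-faithful increasing mappings.

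Second, I will compare the notion of quasi-overlap function of \cite{Qiao2022} on a bounded poset with Definition~\ref{def:qo}. In \cite{Qiao2022} a quasi-overlap function satisfies commutativity, the two boundary conditions (QO2) and (QO3), and increasingness. On a poset, increasingness in each variable is equivalent to increasingness with respect to $\leq_2$: from $x\leq x'$ and $y\leq y'$ we get
$$
O_Y(x,y)\leq O_Y(x',y)\leq O_Y(x',y'),
$$
and transitivity closes the chain. So whichever form of monotonicity \cite{Qiao2022} uses, $O_Y$ satisfies (QO1)--(QO4), i.e.\ $O_Y\in\mathbb{QO}(\mathbb{Q})$.

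Finally, Theorem~\ref{thm:Adjunction-qo} gives $O_X=O_{d,g}\in\mathbb{QO}(\mathbb{P})$. The only potential obstacle is this translation of the monotonicity condition between the two definitions, and transitivity of $\leq$ on $Y$ resolves it. All remaining steps are direct citations.
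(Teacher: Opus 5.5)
Your proposal is correct and takes the paper's route: reduce to Theorem~\ref{thm:Adjunction-qo}, and check that $O_Y$ is increasing with respect to the product order by chaining two one-variable inequalities using the transitivity of $\leq$. The paper also uses commutativity explicitly, via $O_Y(u,v)=O_Y(v,u)\leq O_Y(v,u')=O_Y(u',v)\leq O_Y(u',v')$; this covers the case where the cited definition states monotonicity in only one argument, which your phrase ``whichever form of monotonicity'' leaves implicit.
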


\begin{proof}
From Theorem~\ref{thm:Adjunction-qo}, it is enough to verify the monotonicity of $O_Y$. Let
$$
 u\leq u'
 \mbox{ and }
 v\leq v'.
$$
By the commutativity and one-coordinate monotonicity,
$$
 O_Y(u,v)
 =
 O_Y(v,u)
 \leq
 O_Y(v,u')
 =
 O_Y(u',v),
$$
and
$$
 O_Y(u',v)\leq O_Y(u',v').
$$
Thus
$$
 O_Y(u,v)\leq O_Y(u',v').
$$
\end{proof}

The preceding methods construct quasi-overlap functions between two bounded psosets. We now return to construct a quasi-overlap function on a fixed bounded trellis, originating from \cite{Monteiro2024}.
\begin{definition}\label{def:boundary-retraction}
\emph{Let $\mathbb{P}=(X,\trianglelefteq,0,1)$ be a bounded psoset and $R\subseteq X$ with $0,1\in R$. A mapping $r:X\to R$ is called a \emph{boundary-faithful increasing retraction} if $r:X\to R$ is a boundary-faithful increasing mapping and $r(x)=x$ for any $x\in R.$}
\end{definition}

The following construction is an intrinsic specialization of Theorem~\ref{thm:boundary-construct}.
\begin{theorem}\label{thm:retraction}
Let $\mathbb{T}=(X,\trianglelefteq,\wedge,\vee,0,1)$ be a bounded trellis and $R$ be a subtrellis of $\mathbb{T}$ containing $0$ and $1$. Let $r:X\to R$ be a boundary-faithful increasing retraction, and $O_R$ be a quasi-overlap function on $R$. Then $O:X^2\to X$ defined by
$$
 O(x,y)=O_R(r(x),r(y))
$$
is a quasi-overlap function on $\mathbb{T}$. Moreover,
$
 O|_{R\times R}=O_R.
$
\end{theorem}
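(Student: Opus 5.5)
The plan is to obtain the statement as a direct instance of Theorem~\ref{thm:boundary-construct}. Take $\mathbb{Q}=(R,\trianglelefteq,0,1)$ with the restricted pseudo-order. This is a bounded psoset: reflexivity and antisymmetry are inherited from $X$, and $0,1\in R$ remain the least and greatest elements of $R$. Then take $d=r:X\to R$ and $g=\iota:R\to X$, the inclusion map. The subtrellis structure of $R$ is not really needed beyond $R$ being a bounded psoset containing $0$ and $1$. It only guarantees that $O_R$ is a quasi-overlap function on a bounded trellis in the sense of Definition~\ref{def:qo}.

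The key steps, in order, are as follows.
\begin{enumerate}[label=(\arabic*)]
\item Note that $r$ is boundary-faithful increasing by Definition~\ref{def:boundary-retraction}.
\item Check that $\iota$ is boundary-faithful increasing. It preserves $\trianglelefteq$ because the order on $R$ is the restriction. It satisfies $\iota(u)=0\Leftrightarrow u=0$ and $\iota(u)=1\Leftrightarrow u=1$ trivially, since $0_R=0_X$ and $1_R=1_X$.
\item Apply Theorem~\ref{thm:boundary-construct} to get that
$$
O_{r,\iota}(x,y)=\iota\bigl(O_R(r(x),r(y))\bigr)=O_R(r(x),r(y))
$$
is a quasi-overlap function on $\mathbb{T}$.
\end{enumerate}

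For safety, (QO1)--(QO4) can also be verified directly, since each is a one-line check:
\begin{itemize}
\item (QO1): commutativity follows from that of $O_R$.
\item (QO2): $O(x,y)=0\Leftrightarrow O_R(r(x),r(y))=0\Leftrightarrow r(x)=0$ or $r(y)=0\Leftrightarrow x=0$ or $y=0$.
\item (QO3): the same chain of equivalences with $1$ in place of $0$.
\item (QO4): if $(x,y)\trianglelefteq_2(x',y')$, then $(r(x),r(y))\trianglelefteq_2(r(x'),r(y'))$ in $R$ because $r$ is increasing, and increasingness of $O_R$ concludes.
\end{itemize}

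Finally, for $x,y\in R$ the retraction property gives $r(x)=x$ and $r(y)=y$. Hence $O(x,y)=O_R(x,y)$, so $O|_{R\times R}=O_R$.

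There is no real obstacle here. The only point needing care is confirming that the pseudo-order on $R$ used for $O_R$ and for $r$ is the restriction of $\trianglelefteq$. This ensures that increasingness transfers without any transitivity, which is unavailable in a proper trellis.
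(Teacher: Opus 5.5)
Your proposal is correct and follows essentially the same route as the paper: you instantiate Theorem~\ref{thm:boundary-construct} with $d=r$ and $g$ the inclusion $R\to X$, which is boundary-faithful increasing because $0,1\in R$ and the pseudo-order on $R$ is the restriction. You then use $r(x)=x$ for $x\in R$ to get $O|_{R\times R}=O_R$. Your extra direct check of (QO1)--(QO4) is redundant but harmless, and your remark that only the bounded-psoset structure of $R$ is actually used is accurate.
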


\begin{proof}
Let the mapping $i:R\to X$ satisfy that $i(x)=x$ for any $x\in R$ (we also call $i$ an \emph{inclusion mapping}). Since $R$ contains $0$ and $1$, the mapping $i$ is boundary-faithful increasing. Applying Theorem~\ref{thm:boundary-construct} shows that the operation $\widetilde O:X^2\to X$ defined by
$$
 \widetilde O(x,y)
 =
 i\bigl(O_R(r(x),r(y))\bigr)
$$
is a quasi-overlap function on $\mathbb{T}$. Identifying $R$ with its image under $i$, we have $\widetilde O=O$. Finally, if $x,y\in R$, then $r(x)=x$ and $r(y)=y$ since $r:X\to R$ is a boundary-faithful increasing retraction, so
$$
 O(x,y)=O_R(x,y).
$$
Hence $O|_{R\times R}=O_R$.
\end{proof}

Recall from \cite{Zedam2023} that an \emph{interior operator} on a $\wedge$-semi-trellis $\mathbb{T}=(X,\trianglelefteq,\wedge)$ is a mapping $I:X\to X$ satisfying, for all $x,y\in X$,
\begin{enumerate}[label=(\roman*)]
\item $I(x)\trianglelefteq x$;
\item $I(I(x))=I(x)$;
\item $I(x\wedge y)=I(x)\wedge I(y)$.
\end{enumerate}
 Obviously, such a mapping is increasing, and its range is a $\wedge$-subtrellis.

\begin{corollary}\label{cor:interior}
Let $\mathbb{T}=(X,\trianglelefteq,\wedge,\vee,0,1)$ be a bounded trellis, and $I:X\to X$ be an interior operator on $\mathbb{T}$.
Assume that
\begin{enumerate}[label=(\roman*)]
\item $I(1)=1$;
\item $I(x)=0$ if and only if $x=0$;
\item $R_I=I(X)$ is a bounded subtrellis of $\mathbb{T}$.
\end{enumerate}
If $O_{R_I}$ is a quasi-overlap function on $R_I$, then the binary
operation $O:X^2\to X$ defined by
$$
 O(x,y)=O_{R_I}(I(x),I(y))
$$
is a quasi-overlap function on $\mathbb{T}$.
\end{corollary}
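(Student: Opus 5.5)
The plan is to reduce Corollary~\ref{cor:interior} to Theorem~\ref{thm:retraction} with $R=R_I$ and $r=I$, regarded as a map $X\to R_I$. Three things must be checked: that $R_I$ is a subtrellis containing $0$ and $1$, that $I$ is increasing, and that $I$ is a boundary-faithful increasing retraction onto $R_I$. Once these hold, Theorem~\ref{thm:retraction} gives directly that $O(x,y)=O_{R_I}(I(x),I(y))$ is a quasi-overlap function on $\mathbb{T}$, and that it agrees with $O_{R_I}$ on $R_I\times R_I$.

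First, the subtrellis and boundary conditions. Assumption (iii) makes $R_I$ a bounded subtrellis. We have $1=I(1)\in R_I$ by (i). From axiom (i) of an interior operator, $I(0)\trianglelefteq 0$, so $I(0)=0$ by antisymmetry, and hence $0\in R_I$. The same fact also follows from (ii).

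Second, increasingness, which is the only step needing a real argument, since transitivity is unavailable. Let $x\trianglelefteq y$. In a trellis this gives $x\wedge y=x$: indeed $x$ is a lower bound of $\{x,y\}$, and any lower bound $u$ satisfies $u\trianglelefteq x$, so $x$ is the greatest lower bound. Axiom (iii) then gives $I(x)=I(x\wedge y)=I(x)\wedge I(y)$. The meet here is the meet in $\mathbb{T}$, so it is a lower bound of $I(y)$, and therefore $I(x)\trianglelefteq I(y)$. This argument uses only the defining property of the greatest lower bound, so it holds even though $\trianglelefteq$ may fail to be transitive.

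Third, the retraction property. For $z\in R_I$, write $z=I(w)$; then $I(z)=I(I(w))=I(w)=z$ by idempotency. Boundary faithfulness at $0$ is assumption (ii). Boundary faithfulness at $1$ needs $I(x)=1\Rightarrow x=1$, and this follows from axiom (i): $1=I(x)\trianglelefteq x\trianglelefteq 1$ forces $x=1$ by antisymmetry. Together with $I(1)=1$, this gives the required equivalence.

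With $r=I$ verified to be a boundary-faithful increasing retraction onto the bounded subtrellis $R_I$, Theorem~\ref{thm:retraction} applies and yields the corollary. The main obstacle was expected to be monotonicity without transitivity, but the meet argument above handles it directly.
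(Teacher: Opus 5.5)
Your proposal is correct and follows the paper's route: both reduce to Theorem~\ref{thm:retraction} with $r=I$. Both get the retraction property from idempotency, and both get boundary faithfulness from assumption (ii) together with $1=I(x)\trianglelefteq x$ and antisymmetry. The one real addition is your explicit proof that $I$ is increasing, via $x\wedge y=x$ and $I(x)=I(x)\wedge I(y)\trianglelefteq I(y)$, which is valid without transitivity and which the paper only asserts as obvious in the paragraph before the corollary.
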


\begin{proof}
Since $R_I=I(X)$, for any $x\in R_I$ there exists a $y\in X$ such that $I(y)=x$. Then for any $x\in R_I$, $I(I(y))=I(x)$. Thus $I(x)=I(y)=x$ since $I$ is idempotent, i.e., $I(x)=x$ for any $x\in R_I$.

By assumption,
$
 I(x)=0\Longleftrightarrow x=0.
$
If $I(x)=1$, then $
 1=I(x)\trianglelefteq x
$
since $I:X\to X$ be an interior operator on $\mathbb{T}$, which implies $x=1$. Thus
$
 I(x)=1\Longleftrightarrow x=1.
$

Therefore, $I$ is boundary faithful increasing retraction, and the conclusion follows from Theorem~\ref{thm:retraction}.
\end{proof}

\begin{remark}\label{rem:Adjunction-retraction}
\emph{Let $R$ be a bounded subtrellis of $\mathbb{T}=(X,\trianglelefteq,\wedge,\vee,0,1)$, $r:X\to R$ be a boundary-faithful increasing retraction and $i:R\to X$ be the inclusion mapping. Take $d=r$ and $g=i$.
Then
$r\circ i=\operatorname{id}_R.$
If $(i,r)$ is further an adjunction between $X$ and $R$, then
Theorem~\ref{thm:Adjunction-qo} gives
$
 O_{r,i}(x,y)
 =
 i\bigl(O_R(r(x),r(y))\bigr)
 =
 O_R(r(x),r(y))
$
for any $x,y\in X$, which is exactly the construction in
Theorem~\ref{thm:retraction}. However, a boundary-faithful increasing retraction need not form an adjunction with the inclusion mapping. For example, consider the bounded proper trellis $\mathbb{T}_5$ in Example \ref{examp:T5-classification} and let
$
R=\{0,a,1\}
$
and $i:R\to X$ be the inclusion mapping. Define $r: X\to R$ by
$
r(0)=0, r(a)=r(b)=r(c)=a, r(1)=1.
$
Then $R$ is a bounded subtrellis of $\mathbb{T}_5$, and $r$ is a
boundary-faithful increasing retraction.
Taking $x=c$ and $y=a$, we have
$
a\trianglelefteq r(c)=a,
\mbox{ but }
i(a)=a\ntrianglelefteq c.
$
Hence
$
y\trianglelefteq r(x)
\Longleftrightarrow
i(y)\trianglelefteq x
$
does not hold, and therefore $(i,r)$ is not an adjunction
between $X$ and $R$.}
\end{remark}

\subsection{A transitive-endpoint gluing construction}
We now give the transitive-endpoint gluing construction of quasi-overlap functions on a
bounded trellis. We first need the following lemma.
\begin{lemma}\label{lem:interval-subtrellis}
Let $\mathbb{T}=(X,\trianglelefteq,\wedge,\vee,0,1)$ be a
bounded trellis. Let $n\ge2$ and
$e_1,\ldots,e_{n-1}\in X^{\rm tr}\setminus\{0,1\}$ satisfy
$
0=e_0\triangleleft e_1\triangleleft\cdots
\triangleleft e_{n-1}\triangleleft e_n=1.
$
Assume that
$
X=\bigcup_{i=1}^{n}I_i,
I_i=[e_{i-1},e_i]
=\{x\in X:e_{i-1}\trianglelefteq x\trianglelefteq e_i\}.
$
For $i=1,\ldots,n$, put
$
A_i=I_i\setminus\{e_{i-1}\}.
$
Then the following statements hold:
\begin{enumerate}[label=(\roman*)]
\item
Each $I_i$ is a bounded subtrellis of $\mathbb{T}$ with
the smallest element $e_{i-1}$ and the greatest element $e_i$.

\item
The sets $A_1,\ldots,A_n$ form a partition of
$X\setminus\{0\}$.

\item
If $x\in A_i$ and $y\in A_j$ with $i<j$, then
$x\triangleleft y$.
\end{enumerate}
\end{lemma}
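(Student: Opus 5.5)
The plan is to first establish (iii), because (ii) and the "meet/join stay inside" part of (i) both follow from it. The basic tool is the transitivity of each $e_k$ with $1\le k\le n-1$, used in three forms:
\begin{itemize}
\item left-transitivity: $x\trianglelefteq y\trianglelefteq e_k$ implies $x\trianglelefteq e_k$;
\item right-transitivity: $e_k\trianglelefteq x\trianglelefteq y$ implies $e_k\trianglelefteq y$;
\item middle-transitivity: $x\trianglelefteq e_k\trianglelefteq y$ implies $x\trianglelefteq y$.
\end{itemize}
The endpoints $e_0=0$ and $e_n=1$ are trivially transitive in each of these senses, because they are the bounds.

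An induction along the chain gives $e_i\trianglelefteq e_j$ for all $i\le j$, using middle-transitivity at each interior $e_k$. Antisymmetry then makes the $e_i$ pairwise distinct.

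For (iii), let $x\in A_i$ and $y\in A_j$ with $i<j$. Then $x\trianglelefteq e_i\trianglelefteq e_{j-1}\trianglelefteq y$. Middle-transitivity at $e_i$ gives $x\trianglelefteq e_{j-1}$, and middle-transitivity at $e_{j-1}$ then gives $x\trianglelefteq y$. If $x=y$, we would get $e_{j-1}\trianglelefteq x\trianglelefteq e_i$. Left-transitivity at $e_i$ yields $e_{j-1}\trianglelefteq e_i$. Combined with $e_i\trianglelefteq e_{j-1}$, antisymmetry forces $e_{j-1}=e_i$, so $j-1=i$ and $x=e_i=e_{j-1}$. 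This contradicts $x\in A_j=I_j\setminus\{e_{j-1}\}$. Hence $x\triangleleft y$.

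For (ii), the union of the $A_i$ is $X\setminus\{0\}$. Any $x\ne0$ lies in some $I_i$. If $x=e_{i-1}$ with $i\ge2$, then $x=e_{i-1}\in A_{i-1}$. Conversely, $0\notin A_i$, since $0\in I_i$ with $i\ge2$ would force $e_{i-1}=0$. Disjointness is exactly the argument used in (iii): $x\in A_i\cap A_j$ with $i<j$ would give $x\triangleleft x$.

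For (i), take $x,y\in I_i$. Both $e_{i-1}$ and $x\wedge y$ are lower bounds of $x$ and $y$, so $e_{i-1}\trianglelefteq x\wedge y$ because $x\wedge y$ is the greatest lower bound. Also $x\wedge y\trianglelefteq x\trianglelefteq e_i$, and left-transitivity at $e_i$ gives $x\wedge y\trianglelefteq e_i$. So $x\wedge y\in I_i$, and the join case is dual, using right-transitivity at $e_{i-1}$. Since the meet in $X$ of two elements of $I_i$ is a lower bound lying in $I_i$ and dominating every common lower bound, it is also the meet within $I_i$. Hence $I_i$ is a subtrellis with least element $e_{i-1}$ and greatest element $e_i$.

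I expect the main obstacle to be the bookkeeping of which kind of transitivity is needed at which $e_k$, together with the degenerate endpoint cases $e_0=0$ and $e_n=1$. I will handle these by noting explicitly that the bounds satisfy all three transitivity conditions trivially.
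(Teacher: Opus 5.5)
Your proof is correct and uses the same ingredients as the paper: the chain $e_i\trianglelefteq e_j$ obtained by middle-transitivity, closure of each $I_i$ under $\wedge,\vee$ via left/right-transitivity at its endpoints, and antisymmetry to separate the intervals. The differences are only organizational. You get strictness in (iii) directly via left-transitivity at $e_i$ and then deduce disjointness in (ii) from it, whereas the paper proves disjointness first via right-transitivity at $e_{j-1}$ and then uses it for strictness; also, your remark that $0,1\in X^{\rm tr}$ lets you avoid the paper's separate handling of the cases $i=1$, $i=n$ and $j=i+1$.
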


\begin{proof}
By successive applications of the middle-transitivity
of the internal endpoints,
$
e_i\triangleleft e_j
$
 for all $0\le i<j\le n$. These points are pairwise distinct. Indeed, if $e_i=e_j$
for some $i<j$, then
$e_i\trianglelefteq e_{i+1}\trianglelefteq e_j=e_i$,
so the antisymmetry would give $e_i=e_{i+1}$, a contradiction.

\emph{(i)}
Fix $i\in\{1,\ldots,n\}$ and let $x,y\in I_i$.
Since $e_{i-1}$ is a lower bound of both $x$ and $y$, while
$e_i$ is their upper bound, the definitions of meet and join give
$
e_{i-1}\trianglelefteq x\wedge y,
x\vee y\trianglelefteq e_i.
$
Moreover,
$
x\wedge y\trianglelefteq x\trianglelefteq e_i.
$
If $i<n$, the left-transitivity of $e_i$ gives
$x\wedge y\trianglelefteq e_i$. If $i=n$, then $x\wedge y\trianglelefteq e_i$ since $e_n=1$. Thus \begin{equation}\label{eq3}e_{i-1}\trianglelefteq x\wedge y\trianglelefteq e_i.\end{equation}

On the other hand,
$
e_{i-1}\trianglelefteq x\trianglelefteq x\vee y.
$
If $i>1$, then the right-transitivity of $e_{i-1}$ gives
$e_{i-1}\trianglelefteq x\vee y$. If $i=1$, then $e_{i-1}\trianglelefteq x\vee y$ since $e_0=0$.
Hence $x\wedge y,x\vee y\in I_i$. Thus \begin{equation}\label{eq4}e_{i-1}\trianglelefteq x\vee y\trianglelefteq e_i.\end{equation}

Eqs. (\ref{eq3}) and (\ref{eq4}) imply that $I_i$ is a bounded
subtrellis of $X$ with the smallest element $e_{i-1}$ and the greatest
element $e_i$.

\emph{(ii)}
Let $i<j$. If $j=i+1$, then
$
I_i\cap I_{i+1}=\{e_i\}.
$
If $j\ge i+2$ and $z\in I_i\cap I_j$, then
$
e_{j-1}\trianglelefteq z\trianglelefteq e_i.
$
The right-transitivity of $e_{j-1}$ gives
$e_{j-1}\trianglelefteq e_i$.
On the other hand, $e_i\trianglelefteq e_{j-1}$.
The antisymmetry gives $e_i=e_{j-1}$,
contradicting the distinctness of the endpoints.
Thus
$
I_i\cap I_j=\emptyset
$
when $j\ge i+2$. Therefore, the sets $A_1,\ldots,A_n$ are non-empty and pairwise disjoint, which together with $X=\bigcup_{i=1}^{n}I_i$ means that
$A_1,\ldots,A_n$ form a partition of $X\setminus\{0\}$.

\emph{(iii)}
Let $x\in A_i$ and $y\in A_j$ with $i<j$.
We have $x\trianglelefteq e_i$.
If $j=i+1$, then $e_i=e_{j-1}\trianglelefteq y$, i.e., $e_i\trianglelefteq y$.
If $j\ge i+2$, then
$
e_i\triangleleft e_{j-1}\trianglelefteq y,
$
and the right-transitivity of $e_i$ gives
$e_i\trianglelefteq y$.
In either case,
$
x\trianglelefteq e_i\trianglelefteq y.
$
The middle-transitivity of $e_i$ yields
$x\trianglelefteq y$. Thus $x\triangleleft y$ since $x=y$ contradicts the fact
$A_i\cap A_j=\emptyset$.
\end{proof}

The following theorem presents an important constructing method of a quasi-overlap function on a
bounded trellis.
\begin{theorem}\label{thm:gluing}
Under the assumptions of Lemma~\ref{lem:interval-subtrellis},
let $O_i:I_i^2\to I_i$ be a quasi-overlap function on
$I_i$ for every $i=1,\ldots,n$.
Define $O:X^2\to X$ by
$$
O(x,y)=
\begin{cases}
0,
& x=0\mbox{ or }y=0,\\[1mm]
O_i(x,y),
& x,y\in A_i\mbox{ for some }i,\\[1mm]
e_{\min\{i,j\}},
& x\in A_i\mbox{ and }\ y\in A_j\mbox{ with }\ i\neq j.
\end{cases}
$$
Then $O$ is a quasi-overlap function on $\mathbb{T}$.
Moreover,
$
O|_{I_i^2}=O_i
$
for every $i=1,\ldots,n$.
\end{theorem}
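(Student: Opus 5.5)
The plan is to verify (QO1)--(QO4) directly, using Lemma~\ref{lem:interval-subtrellis} throughout. First, $O$ is well defined. By Lemma~\ref{lem:interval-subtrellis}(ii), every nonzero element lies in exactly one $A_i$, so exactly one of the three clauses applies to each pair. I will also record the basic \emph{location fact}: $O(x,y)\in A_m$ with $m=\min\{i,j\}$ whenever $x\in A_i$, $y\in A_j$. In the mixed case this holds because $e_m\in I_m$ and $e_m\neq e_{m-1}$. In the diagonal case $x,y\in A_i$, we have $O_i(x,y)\in I_i$. Moreover, $O_i(x,y)\neq e_{i-1}$, since $e_{i-1}$ is the bottom of $I_i$ (Lemma~\ref{lem:interval-subtrellis}(i)) and (QO2) for $O_i$ would force $x$ or $y$ to equal $e_{i-1}\notin A_i$.

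Commutativity is immediate from the symmetric definition and (QO1) for each $O_i$. For (QO2), the location fact puts $O(x,y)$ in some $A_m\subseteq X\setminus\{0\}$ whenever $x,y\neq0$. For (QO3), observe that $1\in A_i$ only for $i=n$: if $1\trianglelefteq e_i$, then $e_i=1$. In the mixed case $\min\{i,j\}<n$, so the value $e_{\min\{i,j\}}\neq1$. In the diagonal case, $O(x,y)=1$ forces $i=n$, and then (QO3) for $O_n$ gives $x=y=1$.

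The main step is (QO4). Let $(x,y)\trianglelefteq_2(x',y')$. If $x=0$ or $y=0$, there is nothing to prove. Otherwise $x',y'\neq0$ by antisymmetry with $0$. Write $x\in A_i$, $y\in A_j$, $x'\in A_k$, $y'\in A_l$. The key observation is that $x\trianglelefteq x'$ forces $i\le k$: if $k<i$, Lemma~\ref{lem:interval-subtrellis}(iii) gives $x'\triangleleft x$, which contradicts antisymmetry. Likewise $j\le l$, so $m=\min\{i,j\}\le m'=\min\{k,l\}$.

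If $m<m'$, the location fact puts $O(x,y)\in A_m$ and $O(x',y')\in A_{m'}$, and Lemma~\ref{lem:interval-subtrellis}(iii) yields $O(x,y)\triangleleft O(x',y')$. If $m=m'$ and $k=l$, then $i,j\le k=m$ forces $i=j=k=l$, and monotonicity of $O_i$ on the subtrellis $I_i$ applies. If $m=m'$ and $k\neq l$, then $O(x',y')=e_m$ is the top of $I_m$, while $O(x,y)\in I_m$, so $O(x,y)\trianglelefteq e_m$. I expect this case analysis, especially the index inequality $i\le k$, to be the crux. Transitivity is never needed beyond what Lemma~\ref{lem:interval-subtrellis}(iii) already supplies.

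Finally, I check $O|_{I_i^2}=O_i$. For $x,y\in A_i$ this holds by definition. If exactly one argument equals $e_{i-1}$, then $O_i$ gives $e_{i-1}$ by (QO2) on $I_i$. On the other side, $O$ gives $0=e_0$ when $i=1$, and otherwise $e_{i-1}\in A_{i-1}$ with the other argument in $A_i$, so $O$ gives $e_{\min\{i-1,i\}}=e_{i-1}$. If both arguments equal $e_{i-1}$ with $i\ge2$, then $O(e_{i-1},e_{i-1})=O_{i-1}(e_{i-1},e_{i-1})=e_{i-1}$ by (QO3) for $O_{i-1}$, since $e_{i-1}$ is the top of $I_{i-1}$. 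This agrees with $O_i(e_{i-1},e_{i-1})=e_{i-1}$, and the case $i=1$ is trivial.
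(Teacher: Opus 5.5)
Your proposal is correct and follows the paper's proof essentially step by step: the location fact $O(x,y)\in A_{\min\{i,j\}}$ (the paper's Eq.~\eqref{eq5}), (QO2) and (QO3) derived from it, the $m<m'$ versus $m=m'$ split for (QO4), and the case check for $O|_{I_i^2}=O_i$. You also supply two steps the paper leaves implicit. First, you derive $i\le k$ and $j\le l$ from Lemma~\ref{lem:interval-subtrellis}(iii) and antisymmetry, which the paper asserts without justification. Second, you invoke (QO3) for $O_{i-1}$ to get $O(e_{i-1},e_{i-1})=e_{i-1}$.
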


\begin{proof}
First note that by Lemma~\ref{lem:interval-subtrellis} (ii), the sets
$A_1,\ldots,A_n$ form a partition of $X\setminus\{0\}$.
Hence the operation $O$ is well defined.

Now, let $x,y\neq0$ and
$x\in A_i$, $y\in A_j$. Put
$
m=\min\{i,j\}.
$
If $i=j=m$, then
$
O(x,y)=O_m(x,y).
$
From the construction of every $A_i$, $x,y\neq e_{m-1}$, so that (QO2) for $O_m$ gives
$
O_m(x,y)\neq e_{m-1}.
$
As $O_m(x,y)\in I_m$, it follows that
$
O(x,y)\in A_m.
$
If $i\neq j$, then from the construction of $O$,
$
O(x,y)=e_m\in A_m.
$
Therefore,
\begin{equation}\label{eq5}
 x\in A_i,\ y\in A_j
 \Longrightarrow
 O(x,y)\in A_{\min\{i,j\}}.
\end{equation}

In what follows, we prove that $O$ is a quasi-overlap function on $\mathbb{T}$ according to Definition \ref{def:qo}.

\emph{(QO1)}
The commutativity of $O$ follows immediately from the
commutativity of the functions $O_i$ and the symmetry of
$\min\{i,j\}$.

\emph{(QO2)}
We split the proof of (QO2) into two parts.

Part 1. If $x=0$ or $y=0$, then from the construction of $O$ the first case gives
$$
 O(x,y)=0.
$$

Part 2. Suppose that $O(x,y)=0$. Assume, to the contrary, that
$x,y\neq0$. By Lemma~\ref{lem:interval-subtrellis} (ii), there are
unique indices $i,j\in\{1,\ldots,n\}$ such that
$
x\in A_i
$
and
$
y\in A_j.
$
By Eq.\eqref{eq5},
$$
 O(x,y)\in A_{\min\{i,j\}}.
$$
Thus $O(x,y)\neq 0$ since $A_{\min\{i,j\}}\subseteq X\setminus\{0\}$, a contradiction.

Therefore, by Parts 1 and 2,
$$
 O(x,y)=0
 \Longleftrightarrow
 x=0\mbox{ or }y=0.
$$

\emph{(QO3)}
The proof will be completed by two parts.

Part A. Suppose that $O(x,y)=1$. By (QO2), we have
$x,y\neq0$. Let
$
x\in A_i
$
and
$
y\in A_j.
$
Then by Eq.\eqref{eq5},
$$
 1=O(x,y)\in A_{\min\{i,j\}}.
$$
Since $1\in A_n$ and the sets $A_1,\ldots,A_n$ are pairwise
disjoint, we obtain
$$
 \min\{i,j\}=n.
$$
Thus $i=j=n$, and hence
$$
 O(x,y)=O_n(x,y).
$$
By (QO3) for $O_n$,
$$
 O_n(x,y)=1
 \Longrightarrow
 x=y=1.
$$

Part B. If $x=y=1$, then from $1\in A_n$, we have that $x,y\in A_n$. Thus from the construction of $O$,
$$
 O(1,1)=O_n(1,1)=1.
$$

Consequently,
$$
 O(x,y)=1
 \Longleftrightarrow
 x=y=1.
$$

\emph{(QO4)}
Let
$
(x,y)\trianglelefteq_2(x',y').
$
If $x=0$ or $y=0$, then by (QO2),
$$
 O(x,y)=0\trianglelefteq O(x',y').
$$

Now, assume that $x,y\neq0$. Since
$
x\trianglelefteq x'
$
and
$
y\trianglelefteq y',
$
we also have $x',y'\neq0$. Then by Lemma~\ref{lem:interval-subtrellis} (ii), there are unique
indices $i,j,i',j'$ with $i\leq i'$ and $j\leq j'$ such that
$x\in A_i, y\in A_j, x'\in A_{i'}$ and $y'\in A_{j'}$. Put
$
 m=\min\{i,j\},
 m'=\min\{i',j'\}.
$
Then $m\leq m'$. We distinguish two cases as follows.

Case 1. If $m<m'$, then by Eq.(\ref{eq5}),
$
 O(x,y)\in A_m
 \mbox{ and }
 O(x',y')\in A_{m'}.
$
Hence Lemma~\ref{lem:interval-subtrellis} (iii) gives
$$
 O(x,y)\triangleleft O(x',y').
$$

Case 2. If $m=m'$, then suppose that $i=j=m$.
There are two subcases as follows.

Subcase 1. If $i'=j'=m$, then $x,y,x',y'\in I_m$, and the
increasingness of $O_m$ gives
$$
 O(x,y)
 =
 O_m(x,y)
 \trianglelefteq
 O_m(x',y')
 =
 O(x',y').
$$

Subcase 2. If one of $i'$ and $j'$ is greater than $m$, then
$\min\{i',j'\}=m$ implies $i'\neq j'$. Hence from the construction of $O$,
$$
 O(x',y')=e_m.
$$
On the other hand, from $i=j=m$ and the construction of $O$ we have that
$$
 O(x,y)
 =
 O_m(x,y)
 \trianglelefteq
 e_m
 =
 O(x',y').
$$
since $O_m(x,y)\in I_m$ and $e_m$ is the greatest element of
$I_m$.

Now, suppose that $i\neq j$, say $i<j$. Then $i=m<j\leq j'$.
Thus $i'=m$ since $\min\{i',j'\}=m$.
Consequently,  from the construction of $O$,
$$
 O(x,y)=e_m=O(x',y').
$$

Therefore, in all cases,
$$
 O(x,y)\trianglelefteq O(x',y').
$$

Finally, we prove $O|_{I_i^2}=O_i$ for every $i=1,\ldots,n$. Fix
$i\in\{1,\ldots,n\}$ and let $x,y\in I_i$. We consider two cases as follows.

Case 1. If $x,y\in A_i$, then, by the construction of $O$,
$$
O(x,y)=O_i(x,y).
$$

Case 2. If $x\notin A_i$ or $y\notin A_i$, then $x=e_{i-1}$ or $y=e_{i-1}$. Then by (QO2),
$$
O_i(x,y)=e_{i-1}
$$
since $O_i$ is a quasi-overlap function. There are two subcases as follows.

If $i=1$, then $e_{i-1}=0$, and hence
$O(x,y)=0=O_i(x,y)$ by the construction of $O$.

If $i>1$, then $e_{i-1}\in A_{i-1}$. Thus, if exactly one of
$x,y$ equals $e_{i-1}$, then by the construction of $O$, the cross-interval case gives
$O(x,y)=e_{i-1}$; if $x=y=e_{i-1}$, then by the construction of $O$,
$$
O(x,y)=O_{i-1}(x,y).
$$

Cases 1 and 2 imply that
$$
O(x,y)=O_i(x,y).
$$
\end{proof}

\begin{remark}\label{rem:gluing-comparison}
\emph{In general, the function $O$ constructed in Theorem 3.3 of both \cite{WangHu2022} and \cite{GengZhangQiao2025} may not be quasi-overlap on a bounded trellis since the increasingness of $O$ will be false when $(x,y)\notin [e_{i-1}, e_i]^2$ shown as Example \ref{exam2.7}. Therefore, Theorem \ref{thm:gluing} is different from Theorem 3.3 of both \cite{WangHu2022} and \cite{GengZhangQiao2025} in essential.}
\end{remark}

\begin{example}\label{ex}
\upshape
Let $\mathbb{T}_{16}=(X,\trianglelefteq,\wedge,\vee,0,1)$ be a bounded trellis (see Figure~\ref{fig:three-cycle-trellis}), where
$X=\{0,a,b,c,d,e,f,g,h,i,j,k,l,m,n,1\}$. Then
$f,j\in X^{\rm tr}\setminus\{0,1\}$ and
$X=[0,f]\cup[f,j]\cup[j,1]$. Let $O_{a,b}$, $O_{g,h}$ and $O_{k,l}$ be the quasi-overlap functions on $[0,f]$, $[f,j]$ and $[j,1]$, respectively (see Tables 5, 6 and 7). Then from Theorem~\ref{thm:gluing}, the operation $O$ shown by Table \ref{tab:Oeta4} is a quasi-overlap function.

\begin{figure}[!h]
\centering
\begin{tikzpicture}[scale=0.9]

\node[circle,draw,inner sep=2pt] (0) at (0,0) {$0$};

\node[circle,draw,inner sep=2pt] (a) at (0,1.2) {$a$};
\node[circle,draw,inner sep=2pt] (b) at (-1.15,2.0) {$b$};
\node[circle,draw,inner sep=2pt] (c) at (-0.7,3.3) {$c$};
\node[circle,draw,inner sep=2pt] (d) at (0.7,3.3) {$d$};
\node[circle,draw,inner sep=2pt] (e) at (1.15,2.0) {$e$};

\node[circle,draw,inner sep=2pt] (f) at (0,4.7) {$f$};

\node[circle,draw,inner sep=2pt] (g) at (-1,6.2) {$g$};
\node[circle,draw,inner sep=2pt] (h) at (0,6.6) {$h$};
\node[circle,draw,inner sep=2pt] (i) at (1,6.2) {$i$};

\node[circle,draw,inner sep=2pt] (j) at (0,8.0) {$j$};

\node[circle,draw,inner sep=2pt] (k) at (-1.1,9.4) {$k$};
\node[circle,draw,inner sep=2pt] (l) at (0,10.1) {$l$};
\node[circle,draw,inner sep=2pt] (m) at (1.1,9.4) {$m$};
\node[circle,draw,inner sep=2pt] (n) at (0,8.8) {$n$};

\node[circle,draw,inner sep=2pt] (1) at (0,11.6) {$1$};

\draw (0)--(a);
\draw (0)--(b);
\draw (0)--(c);
\draw (0)--(d);
\draw (0)--(e);

\draw (a)--(f);
\draw (b)--(f);
\draw (c)--(f);
\draw (d)--(f);
\draw (e)--(f);

\draw[->,bend left=20] (a) to (b);
\draw[->,bend left=20] (b) to (c);
\draw[->,bend left=20] (c) to (d);
\draw[->,bend left=20] (d) to (e);
\draw[->,bend left=20] (e) to (a);

\draw (f)--(g);
\draw (f)--(h);
\draw (f)--(i);

\draw (g)--(j);
\draw (h)--(j);
\draw (i)--(j);

\draw[->,bend left=20] (g) to (h);
\draw[->,bend left=20] (h) to (i);
\draw[->,bend left=20] (i) to (g);

\draw (j)--(k);
\draw (j)--(l);
\draw (j)--(m);
\draw (j)--(n);

\draw (k)--(1);
\draw (l)--(1);
\draw (m)--(1);
\draw (n)--(1);

\draw[->,bend left=20] (k) to (l);
\draw[->,bend left=20] (l) to (m);
\draw[->,bend left=20] (m) to (n);
\draw[->,bend left=20] (n) to (k);

\end{tikzpicture}
\caption{The bounded trellis $\mathbb{T}_{16}$.}
\label{fig:three-cycle-trellis}
\end{figure}
\begin{table}[!h]
\centering
\caption{The quasi-overlap function $O_{a,b}$ on $[0,f]$.}
\label{tab:$O_{a,b}$ on $[0,f]$}
\begin{tabular}{c|ccccccc}
$O_{a,b}$&$0$&$a$&$b$&$c$&$d$&$e$&$f$\\\hline
$0$&$0$&$0$&$0$&$0$&$0$&$0$&$0$\\
$a$&$0$&$a$&$a$&$a$&$a$&$a$&$b$\\
$b$&$0$&$a$&$a$&$a$&$a$&$a$&$b$\\
$c$&$0$&$a$&$a$&$a$&$a$&$a$&$b$\\
$d$&$0$&$a$&$a$&$a$&$a$&$a$&$b$\\
$e$&$0$&$a$&$a$&$a$&$a$&$a$&$b$\\
$f$&$0$&$b$&$b$&$b$&$b$&$b$&$f$\\
\end{tabular}
\end{table}

\begin{table}[!h]
\centering
\caption{The quasi-overlap function $O_{g,h}$ on $[f,j]$.}
\label{tab:$O_{g,h}$ on $[f,j]$}
\begin{tabular}{c|ccccc}
$O_{g,h}$&$f$&$g$&$h$&$i$&$j$\\\hline
$f$&$f$&$f$&$f$&$f$&$f$\\
$g$&$f$&$g$&$g$&$g$&$h$\\
$h$&$f$&$g$&$g$&$g$&$h$\\
$i$&$f$&$g$&$g$&$g$&$h$\\
$j$&$f$&$h$&$h$&$h$&$j$\\
\end{tabular}
\end{table}

\begin{table}[!h]
\centering
\caption{The quasi-overlap function $O_{k,l}$ on $[j,1]$.}
\label{tab:$O_{k,l}$ on $[j,1]$}
\begin{tabular}{c|cccccc}
$O_{k,l}$&$j$&$k$&$l$&$m$&$n$&$1$\\\hline
$j$&$j$&$j$&$j$&$j$&$j$&$j$\\
$k$&$j$&$k$&$k$&$k$&$k$&$l$\\
$l$&$j$&$k$&$k$&$k$&$k$&$l$\\
$m$&$j$&$k$&$k$&$k$&$k$&$l$\\
$n$&$j$&$k$&$k$&$k$&$k$&$l$\\
$1$&$j$&$l$&$l$&$l$&$l$&$1$\\
\end{tabular}
\end{table}
\begin{table}[!h]
\centering
\caption{The operation $O$ on $\mathbb{T}_{16}$.}
\label{tab:Oeta4}
\begin{tabular}{c|cccccccccccccccc}
$O$ & $0$ & $a$ & $b$ & $c$ & $d$ & $e$ & $f$ & $g$ & $h$ & $i$ & $j$ & $k$ & $l$ & $m$ & $n$ & $1$\\
\hline
$0$ & $0$ & $0$ & $0$ & $0$ & $0$ & $0$ & $0$ & $0$ & $0$ & $0$ & $0$ & $0$ & $0$ & $0$ & $0$ & $0$\\
$a$ & $0$ & $a$ & $a$ & $a$ & $a$ & $a$ & $b$ & $f$ & $f$ & $f$ & $f$ & $f$ & $f$ & $f$ & $f$ & $f$\\
$b$ & $0$ & $a$ & $a$ & $a$ & $a$ & $a$ & $b$ & $f$ & $f$ & $f$ & $f$ & $f$ & $f$ & $f$ & $f$ & $f$\\
$c$ & $0$ & $a$ & $a$ & $a$ & $a$ & $a$ & $b$ & $f$ & $f$ & $f$ & $f$ & $f$ & $f$ & $f$ & $f$ & $f$\\
$d$ & $0$ & $a$ & $a$ & $a$ & $a$ & $a$ & $b$ & $f$ & $f$ & $f$ & $f$ & $f$ & $f$ & $f$ & $f$ & $f$\\
$e$ & $0$ & $a$ & $a$ & $a$ & $a$ & $a$ & $b$ & $f$ & $f$ & $f$ & $f$ & $f$ & $f$ & $f$ & $f$ & $f$\\
$f$ & $0$ & $b$ & $b$ & $b$ & $b$ & $b$ & $f$ & $f$ & $f$ & $f$ & $f$ & $f$ & $f$ & $f$ & $f$ & $f$\\
$g$ & $0$ & $f$ & $f$ & $f$ & $f$ & $f$ & $f$ & $g$ & $g$ & $g$ & $h$ & $j$ & $j$ & $j$ & $j$ & $j$\\
$h$ & $0$ & $f$ & $f$ & $f$ & $f$ & $f$ & $f$ & $g$ & $g$ & $g$ & $h$ & $j$ & $j$ & $j$ & $j$ & $j$\\
$i$ & $0$ & $f$ & $f$ & $f$ & $f$ & $f$ & $f$ & $g$ & $g$ & $g$ & $h$ & $j$ & $j$ & $j$ & $j$ & $j$\\
$j$ & $0$ & $f$ & $f$ & $f$ & $f$ & $f$ & $f$ & $h$ & $h$ & $h$ & $j$ & $j$ & $j$ & $j$ & $j$ & $j$\\
$k$ & $0$ & $f$ & $f$ & $f$ & $f$ & $f$ & $f$ & $j$ & $j$ & $j$ & $j$ & $k$ & $k$ & $k$ & $k$ & $l$\\
$l$ & $0$ & $f$ & $f$ & $f$ & $f$ & $f$ & $f$ & $j$ & $j$ & $j$ & $j$ & $k$ & $k$ & $k$ & $k$ & $l$\\
$m$ & $0$ & $f$ & $f$ & $f$ & $f$ & $f$ & $f$ & $j$ & $j$ & $j$ & $j$ & $k$ & $k$ & $k$ & $k$ & $l$\\
$n$ & $0$ & $f$ & $f$ & $f$ & $f$ & $f$ & $f$ & $j$ & $j$ & $j$ & $j$ & $k$ & $k$ & $k$ & $k$ & $l$\\
$1$ & $0$ & $f$ & $f$ & $f$ & $f$ & $f$ & $f$ & $j$ & $j$ & $j$ & $j$ & $l$ & $l$ & $l$ & $l$ & $1$\\
\end{tabular}
\end{table}
\end{example}
\section{Concluding remarks }\label{sec:conclusion}

This article extended the theory of quasi-overlap and quasi-grouping functions to bounded psosets, where the underlying relation is not necessarily transitive. The results obtained show that aggregation operators of overlap and grouping type can be studied in a substantially more general setting than bounded posets and lattices. They also reveal some essential differences between the pseudo-ordered setting and the classical ordered setting. For instance, an idempotent quasi-overlap function with the neutral element $1$ forces the underlying bounded trellis to be a bounded lattice, while finite non-trivial bounded psosets do not admit quasi-overlap functions satisfying the cancellation law or the limiting property (Propositions \ref{prop:idempotent-forces-lattice}, \ref{prop:finite-no-cancellation} and \ref{prop:finite-no-limiting}). In particular, the constructing method of quasi-overlap functions based on transitive-endpoints combines local quasi-overlap functions defined on suitable subtrellises into a global one, which takes into account the failure of transitivity and therefore differs essentially from the usual ordinal-sum constructions in lattice-valued settings (Theorem \ref{thm:gluing}).


\begin{thebibliography}{99}
\bibitem{Beliakov2007}
G. Beliakov, A. Pradera, T. Calvo, Aggregation Functions: A Guide for Practitioners, Springer, Berlin, 2007.

\bibitem{Grabisch2009}
M. Grabisch, J.L. Marichal, R. Mesiar, E. Pap, Aggregation Functions, Cambridge University Press, Cambridge, 2009.

\bibitem{Klement2000}
E.P. Klement, R. Mesiar, E. Pap, Triangular Norms, Kluwer Academic Publishers, Dordrecht, 2000.

\bibitem{Menger1942}
K. Menger, Statistical metrics, Proc. Natl. Acad. Sci. USA 28 (1942) 535--537.

\bibitem{Schweizer1983}
B. Schweizer, A. Sklar, Probabilistic Metric Spaces, North-Holland, New York, 1983.

\bibitem{Bustince2010}
H. Bustince, J. Fernandez, R. Mesiar, J. Montero, R. Orduna, Overlap functions, Nonlinear Anal. 72 (2010) 1488--1499.

\bibitem{Bustince2012}
H. Bustince, M. Pagola, R. Mesiar, E. Hüllermeier, F. Herrera, Grouping, overlaps, and generalized bientropic functions for fuzzy modeling of pairwise comparisons, IEEE Trans. Fuzzy Syst. 20 (2012) 405--415.

\bibitem{Bedregal2013}
B. Bedregal, G.P. Dimuro, H. Bustince, E. Barrenechea, New results on overlap and grouping functions, Inf. Sci. 249 (2013) 148--170.

\bibitem{Paiva2021}
R. Paiva, R. Santiago, B. Bedregal, E. Palmeira, Lattice-valued overlap and quasi-overlap functions, Inf. Sci. 562 (2021) 180--199.

\bibitem{Qiao2021}
J. Qiao, Overlap and grouping functions on complete lattices, Inf. Sci. 542 (2021) 406--424.

\bibitem{Qiao2022}
J. Qiao, Constructions of quasi-overlap functions and their generalized forms on bounded partially ordered sets, Fuzzy Sets Syst. 446 (2022) 68--92.
\bibitem{Ouyang2021}
Y. Ouyang, H.-P. Zhang, B. De Baets, Ordinal sums of triangular norms on a bounded lattice, Fuzzy Sets Syst. 408 (2021) 1--12.

\bibitem{Ertugrul2015}
Ü. Ertuğrul, F. Karaçal, R. Mesiar, Modified ordinal sums of triangular norms and triangular conorms on bounded lattices, Int. J. Intell. Syst. 30 (2015) 807--817.
\bibitem{QiaoZhao2022}
J. Qiao, B. Zhao, $\mathcal{I}_{G,N}$-implications induced from quasi-grouping functions and negations on bounded lattices, Int. J. Uncertain. Fuzziness Knowl.-Based Syst. 30 (2022) 925--949.

\bibitem{SunPangZhang2022}
Y. Sun, B. Pang, S.-Y. Zhang, Binary relations induced from quasi-overlap functions and quasi-grouping functions on a bounded lattice, Comput. Appl. Math. 41 (2022) 340.

\bibitem{Monteiro2024}
A.S. Monteiro, R. Santiago, B. Bedregal, E. Palmeira, J. Ara\'ujo, On retractions and extension of quasi-overlap and quasi-grouping functions defined on bounded lattices, J. Intell. Fuzzy Syst. 46 (2024) 863--877.

\bibitem{Skala1971}
H.L. Skala, Trellis theory, Algebra Univers. 1 (1971) 218--233.

\bibitem{Skala1972}
H.L. Skala, Trellis Theory, Mem. Amer. Math. Soc. 121, American Mathematical Society, Providence, RI, 1972.

\bibitem{Fishburn1970}
P.C. Fishburn, Intransitive indifference in preference theory: a survey, Oper. Res. 18 (1970) 207--228.

\bibitem{Kerr2002}
B. Kerr, M.A. Riley, M.W. Feldman, B.J.M. Bohannan, Local dispersal promotes biodiversity in a real-life game of rock-paper-scissors, Nature 418 (2002) 171--174.

\bibitem{Zedam2023}
L. Zedam, B. De Baets, Triangular norms on bounded trellises, Fuzzy Sets Syst. 462 (2023) 108468.

\bibitem{Geng2026}
J. Geng, R. Wang, Z. Chen, Triangular norms on bounded psosets via Adjunctions, Fuzzy Sets Syst. 532 (2026) 109814.

\bibitem{Kong2024}
Y. Kong, B. Zhao, Uninorms on bounded trellises, Fuzzy Sets Syst. 481 (2024) 108898.

\bibitem{Xiu2025}
Z. Xiu, X. Zheng, Nullnorms on bounded trellises, Fuzzy Sets Syst. 521 (2025) 109599.

\bibitem{JiangWangLiu2025}
D.-X. Jiang, Y.-M. Wang, H.-W. Liu, New constructions of nullnorms on bounded trellises, Kybernetika 61 (2025) 872--894.

\bibitem{KongLiu2026}
Y. Kong, H.-W. Liu, Several classes of uninorms on bounded pseudo-ordered sets, Fuzzy Sets Syst. 528 (2026) 109725.

\bibitem{Birkhoff73} G. Birkhoff, Lattice Theory, vol.XXV, 3rd ed., American Mathematical Society Colloquium Publications, Providence, RI, 1973.

\bibitem{Palmeira2015}
E.S. Palmeira, B. Bedregal, H. Bustince,
Applying two different methods to extend restricted dissimilarity functions,
Proc. IFSA-EUSFLAT 2015 (2015) 1126--1133.

\bibitem{WangHu2022}
Y. Wang, B.Q. Hu,
On ordinal sums of overlap and grouping functions on complete lattices,
Fuzzy Sets Syst. 439 (2022) 1--28.

\bibitem{GengZhangQiao2025}
J. Geng, Y. Zhang, J. Qiao,
Ordinal sums of quasi-overlap functions on bounded lattices,
Fuzzy Sets Syst. 521 (2025) 109581.
\end{thebibliography}
\end{document}